\documentclass[a4paper,reqno, 11pt]{amsart}  
\usepackage[DIV=12, oneside]{typearea}
\usepackage[utf8]{inputenc}
\usepackage[T1]{fontenc}
\usepackage[english]{babel}
\usepackage[centertags]{amsmath}
\usepackage{amstext,amssymb,amsopn,amsthm}
\usepackage{nicefrac, esint}
\usepackage{mathrsfs}
\usepackage{dsfont}
\usepackage{bbm}
\usepackage{thmtools}
\usepackage{graphicx}
\usepackage{float}
\usepackage[titletoc,title]{appendix}

\usepackage[backgroundcolor=white, bordercolor=blue,
linecolor=blue]{todonotes}
\parskip1ex 

\usepackage[colorlinks=true, linkcolor=black, citecolor=black]{hyperref}
\usepackage{enumitem}
\setlist[enumerate]{itemsep=0mm}

\addto\extrasenglish{}
\addto\extrasenglish{}
\addto\extrasenglish{}



\parindent0ex    

\declaretheorem[name=Theorem, numberwithin=section]{theorem}
\newtheorem{corollary}[theorem]{Corollary}
\newtheorem{lemma}[theorem]{Lemma}
\newtheorem{proposition}[theorem]{Proposition}
\newtheorem{definition}[theorem]{Definition}

\newtheorem*{example*}{Example}
\newtheorem*{robremark*}{Robustness remark}


\declaretheoremstyle[bodyfont=\normalfont]{remark-style}
\declaretheorem[name=Remark, numberwithin=section, style=remark-style, sibling=theorem]{remark}
\declaretheorem[name=Example, numberwithin=section, style=remark-style, sibling=theorem]{example}

\numberwithin{equation}{section}

\newtheorem{assumption}{Assumption}

\newcommand{\N}{\mathds{N}}
\newcommand{\R}{\mathds{R}}

\newcommand{\Z}{\mathds{Z}}

\newcommand{\ma}{\mu_{\text{axes}}}

\newcommand{\am}{\alpha_{\max}}
\def\hmath$#1${\texorpdfstring{{\rmfamily\textit{#1}}}{#1}}

\newcommand{\cE}{\mathcal{E}}
\newcommand{\cK}{\mathcal{K}}

\newcommand{\cU}{\mathcal{U}}

\newcommand{\cF}{\mathcal{F}}

\newcommand{\eps}{\varepsilon}

\newcommand{\loc}{\mathrm{loc}}

\newcommand{\amax}{{\alpha_{\max}}}

\newcommand{\BIGOP}[1]
{
\mathop{\mathchoice%
{\raise-0.22em\hbox{\huge $#1$}}%
{\raise-0.05em\hbox{\Large $#1$}}{\hbox{\large $#1$}}{#1}}}
\newcommand{\bigtimes}{\BIGOP{\times}}
\def\Xint#1{\mathchoice
   {\XXint\displaystyle\textstyle{#1}}%
   {\XXint\textstyle\scriptstyle{#1}}%
   {\XXint\scriptstyle\scriptscriptstyle{#1}}%
   {\XXint\scriptscriptstyle\scriptscriptstyle{#1}}%
   \!\int}
\def\XXint#1#2#3{{\setbox0=\hbox{$#1{#2#3}{\int}$}
     \vcenter{\hbox{$#2#3$}}\kern-.5\wd0}}

\def\dashint{\Xint-}
\newcommand{\BIGboxplus}{\mathop{\mathchoice%
{\raise-0.35em\hbox{\huge $\boxplus$}}%
{\raise-0.15em\hbox{\Large $\boxplus$}}{\hbox{\large $\boxplus$}}{\boxplus}}}


\DeclareMathOperator{\dist}{dist}

\DeclareMathOperator{\supp}{supp}

\DeclareMathOperator{\dvg}{div}

\DeclareMathOperator*{\osc}{osc}

\newcommand{\U}{\widetilde{u}}

\renewcommand{\d}{\textnormal{d}}


\begin{document}
\allowdisplaybreaks
 \title{Parabolic problems for direction-dependent local-nonlocal operators}

\author{Jamil Chaker}
\author{Moritz Kassmann}
\author{Marvin Weidner}

 \address{Fakult\"{a}t f\"{u}r Mathematik\\Universit\"{a}t Bielefeld\\Postfach 100131\\D-33501 Bielefeld}
\email{jchaker@math.uni-bielefeld.de}

\address{Fakult\"{a}t f\"{u}r Mathematik\\Universit\"{a}t Bielefeld\\Postfach 100131\\D-33501 Bielefeld}
\email{moritz.kassmann@uni-bielefeld.de}
\urladdr{www.math.uni-bielefeld.de/$\sim$kassmann}

\address{Universitat de Barcelona, Departament de Matem\`atiques i Inform\`atica, Gran Via de les Corts Catalanes 585, 08007 Barcelona, Spain}
\email{mweidner@ub.edu}
\urladdr{https://sites.google.com/view/marvinweidner/}

\keywords{nonlocal operator, H\"{o}lder regularity, weak Harnack inequality, jump process, variational solution, anisotropic measure, local nonlocal}

\thanks{Jamil Chaker is supported by the DFG through Forschungsstipendium Project 410407063. Moritz Kassmann is supported by the DFG through CRC 1283. Marvin Weidner is supported by the European Research Council (ERC) under the Grant Agreement No 801867, and by the AEI project PID2021-125021NA-I00 (Spain).}

\subjclass[2020]{47G20, 35B65, 31B05, 60J75, 35K90}

\allowdisplaybreaks

\begin{abstract}
We study parabolic equations governed by integro-differential operators with nonlocal components in some directions and local components in the remaining directions. The setting contains the purely nonlocal, as well as the purely local case. Our approach is based on an energy method allowing for jumping measures that are singular or supported on cusps. In addition, the jumping measure may depend on the direction. The emphasis of our study is on the weak Harnack inequality and H\"older regularity estimates for solutions of such equations. The main regularity estimates are robust in the sense that the constants can be chosen independently of the order of differentiability of the operators.
\end{abstract}

\maketitle

\section{Introduction}  
The aim of this work is to study regularity properties of weak solutions to parabolic equations of the form $\partial_t u-Lu=f$ in $I \times \Omega\subset \R^{d+1}$ 
governed by a linear local-nonlocal operator of the following type 
\begin{align*} 
	L u (t,x) = \text{ p.v.} \int_{\R^d} (u(t,y)-u(t,x))\mu(t,x,\d y) + \dvg \left(A(t,x)(\partial_k u(t,x))_{k = d_1+1}^{d}\right) \,.
\end{align*}
This operator is determined by a family of jumping measures $\mu(t,x,\cdot)$ and matrices $A(t,x) \in \R^{d_2 \times d_2}$ for $d=d_1 + d_2$, $t\in\R$, $x\in\R^d$. The operator $L$ is a nonlocal operator in the first $d_1$ coordinates and a second-order differential operator in the last $d_2$ coordinates. Note that our approach includes the purely nonlocal case $d=d_1$ and the purely local case $d=d_2$. In the purely nonlocal case, our results include the regularity results proved in \cite{DyKa15, ChKa20}, and in \cite{KaWe22a} when assuming symmetry of $\mu$.

Let us provide three characteristic examples, which are covered in this work for the first time. The first example is given by the following simple local-nonlocal translation-invariant operator 
\begin{align}\label{eq:frac_aniso_laplace}
-L = (-\partial_{11})^{\alpha_1/2} + \ldots + (-\partial_{d_1 d_1})^{\alpha_{d_1}/2} + (-\partial_{(d_1+1)(d_1+1)}) + \ldots (-\partial_{dd})\,,
\end{align}
where $\alpha_1,\dots,\alpha_{d_1}\in(0,2)$ and $d_1,d_2 \ge 0$ with $d_1 + d_2 = d$, see \autoref{sec:lnl}. A second example is the purely nonlocal ($A \equiv 0$) operator 
\begin{align}\label{eq:cusp-frac-laplace}
	-Lu (x) = \text{ p.v.} \int_{\R^d} (u(x)-u(y)) |x-y|^{-d-\gamma} \mathds{1}_{\Gamma}(x-y)\d y \,,
\end{align} 
where $\gamma$ can be any arbitrarily large positive number if the set $\Gamma \subset \R^d$ is chosen appropriately, see \autoref{sec:cusps}   for further details.  The main reason that allows us to study operators including \eqref{eq:frac_aniso_laplace} and \eqref{eq:cusp-frac-laplace} is that our approach is tailor-made for direction-dependent singular jumping measures. To this end we introduce $(\ma(x,\cdot))_{x \in \R^d}$, which serves as a family of reference measures and plays an important role within this article
\begin{align}
	\label{def:muaxes}
	\ma(x,\d y)=\sum_{k=1}^{d_1} \Big( (2-\alpha_k) |x_k-y_k|^{-1-\alpha_k}\,  \d y_k\prod_{i\neq k}\delta_{\{x_i\}}(\d y_i) \Big)
\end{align}
with $\alpha_1,\dots,\alpha_{d_1}\in(0,2)$. A third purely nonlocal operator that is characteristic for this work is then defined by $\mu(t,x,\d y)=a(t,x,y)\ma(x,\d y)$, where $a$ is a measurable, positive, bounded function, and $d_1=d$, $A \equiv 0$. For more details on this example see \autoref{sec:mu-axes}.

Note that the general form of $L$ reduces to \eqref{eq:frac_aniso_laplace} up to constants if we set $\mu(t,x,\d y) = \ma(x,\d y)$ and $A = (\delta_{jk})_{j,k = d_1+1}^d$. Then, the directional orders of differentiation of the operator are determined by the exponents $\alpha_k$. 
It is instructive to understand the jumps resp. the L\'{e}vy process that is driven by the jumping measure $\ma(0,\d y)$. First note, that $\ma(0,\d y)$ is a 
L\'{e}vy measure supported on the union of the coordinate axes. The corresponding L\'{e}vy process $Z_t = (Z_t^1,\dots,Z_t^{d_1})$ is a $d_1$-dimensional process consisting of independent one-dimensional $\alpha_k$-stable L\'{e}vy processes $Z_t^k$. Due to the independence, the process $Z_t$ can only jump into the direction of the coordinate axes, which is done like a one-dimensional $\alpha_k$-stable L\'{e}vy process in the $k$-th coordinate direction. The operator $L$ defined in \eqref{eq:frac_aniso_laplace} corresponds to the generator of the L\'{e}vy process $(Z_t, B_t)$, where $Z_t$ is as before and $B_t$ is an independent $d_2$-dimensional Brownian motion. The process jumps at any given time with probability one in at most one of the first $d_1$-directions. This is done like a one-dimensional $\alpha_k$-stable L\'{e}vy process in the $k$-th coordinate direction. Meanwhile, the process moves continuously inside the $d_2$-dimensional subspace spanned by the remaining $x_{d_1+1},\dots,x_d$-directions like a $d_2$-dimensional Brownian motion.

\subsubsection*{An anisotropic metric} In order to deal with anisotropies, we consider for given $d_1, d_2 \in \{0,1,\dots,d\}$ with $d = d_1 + d_2$, $\alpha_1,\dots,\alpha_{d_1} \in(0,2)$, and $\alpha_{d_1 + 1},\dots,\alpha_d = 2$ corresponding rectangles.
Throughout the article, we set $\amax := \max\{\alpha_k |\, 1 \leq k \leq d \} \vee 1$.
\begin{definition}\label{def:M_r}
Let $\alpha_1,\dots,\alpha_{d} \in (0,2]$. For $r>0$ and $x\in\R^d$ we define 
\begin{align*}
M_r(x) =\bigtimes_{k=1}^d 
\left(x_k-r^{\frac{\amax}{\alpha_k}},x_k+r^{\frac{\amax}{ \alpha_k}}\right) 
\quad \text{ and } M_r = M_r(0) \,.
\end{align*}
\end{definition}
It is worth emphasizing that $\am$ can be replaced by any real number $a\geq \am$.
It has to be ensured that there is an underlying metric in the Euclidean space $\R^d$ such that $M_r(x)$ are balls with radius $r>0$ centered at $x\in\R^d$. 
Replacing $\am$ by any $a\geq \am$ will still satisfy this property.  For each $k \in \lbrace 1,\dots,d\rbrace$, we define $E_r^k(x) = \lbrace y \in \R^d : \vert x_k - y_k \vert < r^{\am/{\alpha_k}}\rbrace$. Note 
\begin{align}
\label{def:E_r}
M_r(x) = \bigcap_{k = 1}^d E_r^k(x).
\end{align}

\subsubsection*{Assumptions and main results} The purpose of this paper is to study regularity results for weak solutions to $\partial_t u-Lu=f$. 
Since we will define weak solutions with the help of bilinear forms, let us introduce those objects in the following. 
The actual definition of weak solutions will be given in \autoref{sec:Prel}.\\
Let $(\mu(t,x,\cdot))_{(t,x) \in I \times \R^d}$ be a family of measures and $A = \left(A_{j,k}\right)_{j,k = d_1+1}^d : I \times \R^d \rightarrow \R^{d_2 \times d_2}$. For $u,v\in L^2_{\loc}(\R^d)$ and $\Omega\subset\R^d$ open and bounded, we define
\begin{align*}
\mathcal{E}^{\mu(t)}_{\Omega}(u,v) &= \int_{\Omega}\int_{\Omega} 
(u(y)-u(x))(v(y)-v(x))\,\mu(t,x,\d y)\, \d x,\\
\mathcal{E}^{A(t)}_{\Omega}(u,v) &= \int_{\Omega} \sum_{j,k=d_1+1}^d A_{j,k}(t,x) \partial_j u(x) \partial_k v(x) \d x,\\
\mathcal{E}_{\Omega}^t(u,v) &= \mathcal{E}^{\mu(t)}_{\Omega}(u,v) + \mathcal{E}^{A(t)}_{\R^d}(u,v),
\end{align*}
and $\mathcal{E}^{\mu(t)}(u,v) = \mathcal{E}^{\mu(t)}_{\R^d}(u,v)$, $\mathcal{E}^{A(t)}(u,v)= \mathcal{E}^{A(t)}_{\R^d}(u,v)$, $\mathcal{E}^t(u,v) = \mathcal{E}^t_{\R^d}(u,v)$ whenever the 
quantities are finite.

Let us formulate and explain the main assumptions on $(\mu(t,x,\cdot))_{(t,x) \in I \times \R^d}$ and $A$. 
We assume symmetry in the following way: 

\begin{assumption}\label{assumption:symmetry}
For all measurable sets $F,G \in \mathcal{B}(\R^d)$ and every $t \in I$, $x \in \R^d$:
\begin{align}
\label{assmu0}
\int_F \int_G \mu(t,z,\d y) \d z = \int_G \int_F \mu(t,z,\d y) \d z, \qquad A_{j,k}(t,x) = A_{k,j}(t,x).
\end{align}
\end{assumption}

Let $\alpha_1,\dots,\alpha_{d_1}\in[\alpha_0,2)$ be given for some $\alpha_0\in(0,2]$ and let $\beta > 0$.
The following assumption deals with the tail behavior of $(\mu(t,x,\cdot))_{(t,x)\in I \times \R^d}$ and the boundedness of $A$.

\begin{assumption}\label{assumption:tail}
There exist $\Lambda \ge 1$ and  $\theta > 0$ such that for every $t \in I$, $x_0 \in \R^d$:
\begin{align}
\label{assmu1new}
\mu(t,x_0, \R^d \setminus E_{\rho}^k(x_0)) &\le 
\begin{cases}
\Lambda (2-\alpha_k)\rho^{-\am}, &\rho \in (0,2),\\
\Lambda (2-\alpha_k)\rho^{-\theta}, &\rho > 2.
\end{cases}
\quad \forall k \in \lbrace 1, \dots , d \rbrace,\\
\label{eq:A-upper}
\sum_{j,k=d_1+1}^d A_{j,k}(t,x) \xi_i \xi_j &\le \Lambda |\xi|^2 \quad \forall x \in \R^d, ~~ \xi = (\xi_{d_1+1},\dots,\xi_d) \in \R^{d_2}.
\end{align}
\end{assumption}

The tail estimate \eqref{assmu1new} for $\rho \in (0,2)$ will later be used in order to work with Lipschitz-continuous cut-off functions, see \autoref{thmcutoffest}. $\theta$ can be any positive number with $\theta = \am$ being the most natural choice.
The next assumption explains, in which sense the family $(\mu(t,x,\cdot))_{(t,x) \in I \times \R^d}$ is comparable with $(\ma(x, \cdot))_{x \in \R^d}$.
Given $\alpha_1,\dots, \alpha_{d_1} \in (0,2)$, we define 
\[ \beta := \sum_{k = 1}^{d_1} \frac{1}{\alpha_k} + \sum_{k = d_1 + 1}^d \frac{1}{2}.\]

\begin{assumption}\label{assumption:func-ineq}
There exists $\Lambda \ge 1$ such that for every $t \in I$, $x_0 \in \R^d$, $r \in (0,1]$, $\lambda \in (1,2]$ and $v \in V(M_{\lambda r}(x_0) | \R^d)$:
\begin{align}
\label{eq:poincare-assum}
\|v-[v]_{M_r(x_0)}\|_{L^2(M_r(x_0))}^2 &\leq \Lambda r^{\am}\mathcal{E}^t_{M_{r}(x_0)}(v,v),\\
\label{eq:sobolev-assum}
\Vert v \Vert_{L^{\frac{2\beta}{\beta-1}}(M_{r}(x_0))}^2 \le \Lambda \mathcal{E}^t_{M_{\lambda r}(x_0)}(v,v) &+ \Lambda r^{-\am}\left( \sum_{k=1}^d (\lambda^{\frac{\am}{\alpha_k}} - 1)^{-\alpha_k}\right) \Vert v \Vert_{L^2(M_{\lambda r}(x_0))}^2,\\
\label{eq:fct-space-comp}
\mathcal{E}^{\mu(t)}_{M_r(x_0)}(v,v) &\le \Lambda \mathcal{E}^{\ma}_{M_r(x_0)}(v,v).
\end{align}
\end{assumption}

\autoref{assumption:func-ineq} serves two purposes. On the one hand, it contains the Poincar\'e inequality \eqref{eq:poincare-assum} and the Sobolev inequality \eqref{eq:sobolev-assum}. On the other hand, through \eqref{eq:fct-space-comp} it guarantees the finiteness of the  nonlocal terms used in the definition of (weak) solutions, see \eqref{weaksol}. Note that we assume comparability on small scales only because we prove the weak Harnack inequality and Hölder regularity estimates on scales comparable to the size of $\Omega$.

Using the aforementioned assumptions, we can define the class of admissible pairs $(\mu,A)$ that we use in this paper.

\begin{definition}[Class of admissible pairs $(\mu,A)$]\label{def:admissible-pairs}
Let $\Lambda \ge 1$ and $\alpha_1,\dots,\alpha_{d_1}\in[\alpha_0,2)$ be given for some $\alpha_0\in(0,2)$ and $\alpha_{d_1+1},\dots,\alpha_d = 2$.
We call a family of measures $(\mu(t,x,\cdot))_{(t,x) \in I \times \R^d}$ and a matrix-valued function $(A_{j,k})_{j,k=d_1+1}^d$ admissible, 
if they satisfy \autoref{assumption:symmetry}, \autoref{assumption:tail} and \autoref{assumption:func-ineq}. We denote the class of such pairs $(\mu,A)$ by $\cK(\alpha_0,\Lambda)$.
\end{definition}

Note that the constants in our two main theorems depend on the choice of $\alpha_0$ and $\Lambda$ but not on the values of $\alpha_1, \ldots, \alpha_d$. Thus we suppress the dependence on $\alpha_1, \ldots , \alpha_d$ in the notation of $\cK(\alpha_0,\Lambda)$.

\begin{remark}\label{rem:classes}{\ } \vspace*{-1ex}
	\begin{itemize}
		\item [(i)] A basic example is $(\mu,0) \in \cK(\alpha_0,\Lambda)$ if $\mu(t,x, \d y)$ is symmetric in the sense of \autoref{assumption:symmetry} and the comparability $\mu(t,x, \d y) \asymp |x-y|^{-d-\alpha} \d y$ holds true. In this case, one chooses all exponents $\alpha_1, \ldots, \alpha_d$ to be equal to $\alpha$. The verification of \autoref{assumption:tail} and \autoref{assumption:func-ineq} can be found in \cite{DyKa15}.
		\item [(ii)] An important example is $(\ma,0) \in \cK(\alpha_0,\Lambda)$  where  $\Lambda \ge 1$ needs to be chosen appropriately in dependence of $d$ and $\alpha_0$. In this case $d_1 = d$, see \autoref{sec:mu-axes} for details.
		\item [(iii)] The example \eqref{eq:frac_aniso_laplace} is covered by $(\ma,A) \in \cK(\alpha_0,\Lambda)$, where we choose $A = (\delta_{jk})_{j,k = d_1 + 1}^d$, see \autoref{sec:lnl}.
		\item[(iv)] In \autoref{sec:cusps} we provide examples of the form $(\mu,0) \in \cK(\alpha_0,\Lambda)$ where $\mu(t,x, \d y)$ is comparable to $|x-y|^{-d-\gamma} \d y$ for arbitrarily large numbers $\gamma$. The catch here is that comparability is assumed only on cusp-like subdomains of $\R^d$.
	\end{itemize}
\end{remark}

In this work we study operators defined by admissible pairs $(\mu,A)$ and establish local regularity results such as Hölder regularity estimates. 
Our main auxiliary result is a so-called weak Harnack inequality.
A well-known consequence of the weak Harnack inequality is a result on the decay of oscillation for solutions and therefore Hölder regularity estimates. 

Our two main results are as follows. 

\begin{theorem}[Weak Harnack inequality]
\label{thm:weakHarnack}
There is a constant $c = c(d,\alpha_0,\Lambda) > 0$ such that for every $(\mu,A) \in \cK(\alpha_0,\Lambda)$ and every supersolution $u$ of $\partial_t u - L u = f$ in $Q = (-1,1) \times M_2(0)$, satisfying $u\geq 0$ in $(-1,1) \times \R^d$, the following holds:
\begin{align}
\label{eq:weakHarnack}
\Vert u \Vert_{L^1(U_{\ominus})} \le c \left( \inf_{U_{\oplus}} u + \Vert f \Vert_{L^{\infty}(Q)} \right),
\end{align}
where $U_{\oplus} = (1-\frac{1}{2^{\am}},1) \times M_{1/2}(0)$ and $U_{\ominus} = (-1,-1 +\frac{1}{2^{\am}}) \times M_{1/2}(0)$.
\end{theorem}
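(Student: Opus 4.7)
The plan is to run the parabolic Moser iteration scheme in the anisotropic nonlocal setting, using the comparability \eqref{assmu3} to transfer the relevant functional inequalities (Sobolev, Poincar\'e, cut-off energy inequality) from the explicit reference family $\ma$ to a general $\mu \in \cK(\alpha_0,\Lambda)$. A preliminary reduction replaces $u$ by $w = u + \|f\|_{L^\infty(Q)} + \delta$ for some $\delta > 0$, so that $w$ is strictly positive on $(-1,1) \times \R^d$, the inhomogeneity $f$ enters only as a lower-order term of size $\|f\|_\infty$ that is absorbed throughout the iteration, and $\delta$ is sent to $0$ at the very end.

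The core of the argument then consists of two ingredients. \emph{First}, a Caccioppoli-type energy inequality is obtained by testing the weak formulation against $\eta^2 w^{-1-q}$ for $q>0$, where $\eta$ is a Lipschitz cut-off supported in an anisotropic parabolic cylinder $(t_0 - r^{\amax}, t_0) \times M_r(x_0)$; the tail bound \eqref{assmu1new} handles the cross terms produced by $\eta$, and together with a Sobolev inequality for $\ma$ (which, via \eqref{assmu3}, also holds for $\mu$ up to $\Lambda$-factors) this allows Moser iteration on negative exponents, producing
\[
  \inf_{U_\oplus} w \;\geq\; c\,\bigl(\|w^{-p_0}\|_{L^1(Q_*)}\bigr)^{-1/p_0}
\]
for some $p_0 > 0$ and an intermediate cylinder $U_\oplus \subset Q_* \subset Q$. \emph{Second}, testing the equation against $\eta^2/w$ produces a logarithmic estimate controlling the nonlocal ``gradient'' of $\log w$ together with its $t$-variation, which, via a Poincar\'e inequality for $\ma$, upgrades to a parabolic BMO-type bound for $\log w$ on $M_r$-cylinders. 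A Bombieri--Giusti / parabolic John--Nirenberg type argument then bridges positive and negative exponents, yielding a $p \in (0,p_0]$ with
\[
  \|w^{p}\|_{L^1(U_\ominus)}^{1/p}\;\|w^{-p}\|_{L^1(Q_*)}^{1/p} \;\leq\; C.
\]
Combined with the first step and Jensen's inequality (for $p\le 1$) this gives $\|w\|_{L^1(U_\ominus)} \leq c\,\inf_{U_\oplus} w$, which recovers \eqref{eq:weakHarnack} after sending $\delta \downarrow 0$.

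The main obstacle is two-fold. First, every nonlocal tail contribution created when inserting a cut-off into the bilinear form $\cE^{\mu}$ must be absorbed using only the weak tail bound \eqref{assmu1new}; this is delicate because the natural side-lengths $r^{\amax/\alpha_k}$ of $M_r(x_0)$ can differ drastically when some $\alpha_k$ are close to $\alpha_0$ while others are close to $2$, and the cut-offs have to be chosen to respect this geometry. Second, and more importantly, the entire chain of estimates has to be made \emph{robust}: the constants in the Caccioppoli inequality, the Sobolev embedding for $\ma$, the logarithmic estimate, and the Bombieri--Giusti lemma must depend only on $d$, $\alpha_0$, and $\Lambda$, not on the individual $\alpha_k$. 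Tracking this forces one to carry the $(2-\alpha_k)$ normalization from \eqref{def:muaxes} through every step and to use a version of the Sobolev inequality on $M_r$ whose constants degenerate correctly as $\alpha_k \to 2^-$.
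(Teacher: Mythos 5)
Your overall architecture matches the paper's: replace $u$ by $\tilde u = u + \|f\|_{L^\infty(Q)}$ (with a $\delta$-approximation when $f\equiv 0$), run a Caccioppoli-type estimate with cut-offs respecting the anisotropic geometry of $M_r(x_0)$, push through a nonlocal Sobolev inequality transferred from $\ma$ to $\mu$ via \autoref{assumption:comparability}, obtain an estimate for $\log\tilde u$ by testing against $\psi^2\tilde u^{-1}$ and using a weighted Poincar\'e inequality, and assemble everything via the abstract Bombieri--Giusti lemma (\autoref{thmBG}). The robustness concerns you highlight are exactly the ones the paper has to handle: the $(2-\alpha_k)$ normalization, the tail condition \eqref{assmu1new}, and cut-offs whose anisotropic slopes $(\lambda^{\am/\alpha_k}-1)^{-1}r^{-\am/\alpha_k}$ produce the right constants.

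However, there is a genuine gap in your final assembly. After the crossover you obtain, for some small $p>0$, an inequality of the form $\|w^{p}\|_{L^1(U_\ominus)}^{1/p}\,\|w^{-p}\|_{L^1(Q_*)}^{1/p} \le C$, and you claim that together with the negative-exponent Moser bound and ``Jensen's inequality (for $p\le 1$)'' this gives $\|w\|_{L^1(U_\ominus)} \le c\,\inf_{U_\oplus} w$. But Jensen's (power-mean) inequality runs in the wrong direction: for $p<1$ it gives $\bigl(\dashint w^p\bigr)^{1/p} \le \dashint w$, i.e. an \emph{upper} bound on the small-$p$ moment in terms of the $L^1$-norm, not the other way around. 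There is no elementary route from $\|w^p\|_{L^1}^{1/p}$ up to $\|w\|_{L^1}$; that reverse-H\"older step is precisely what the second half of the Moser scheme provides. The paper addresses this by proving a separate Moser iteration for \emph{small positive exponents} (\autoref{thmit2} and \autoref{thmmoser2}), based on the algebraic inequality \eqref{eq:guelle-2} for $q\in(0,1)$, which yields $\|\tilde u\|_{L^1(Q_\oplus(r))} \le C\,\|\tilde u\|_{L^p(Q_\oplus(R))}$ with the correct $(R-r)^{-\am}$-dependence. This is then fed into a \emph{second} application of \autoref{thmBG} (with $p_0 = 1$, $\eta = d/(d+\alpha_0)$), applied to $\hat w = e^a\tilde u$; multiplying the two Bombieri--Giusti outputs cancels the normalization constant $e^a$ and produces \eqref{eq:weakHarnack}. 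Without this positive-exponent iteration your argument terminates at an $L^p$-to-$\inf$ estimate for some uncontrolled small $p$, which is strictly weaker than the stated $L^1$ bound.
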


\begin{remark}
One obtains the following scaled version of \autoref{thm:weakHarnack} by applying \autoref{thmscaling}:
\begin{align*}
\begin{split}
\Vert u \Vert_{L^1((-r^{\am}, (-1+\frac{1}{2^{\am}})r^{\am})\times M_{r/2}(0))} &\le c\Bigg( \inf_{((1-\frac{1}{2^{\am}})r^{\am},r^{\am}) \times M_{r/2}(0)} u\\
&\qquad + r^{\am} \Vert f \Vert_{L^{\infty}((-r^{\am},r^{\am}) \times M_{2r}(0))}\Bigg).
\end{split}
\end{align*}
\end{remark}

\begin{theorem}[Hölder regularity estimate]
\label{thm:HR}
There is a constant $\gamma = \gamma(d,\alpha_0,\Lambda,\theta) \in (0,1)$ such that for every $(\mu,A) \in \cK(\alpha_0,\Lambda)$ and every solution $u$ to $\partial_t u - L u = 0$ in $Q = I \times \Omega$ and every $Q' \Subset Q$, the following holds:
\begin{align}
\label{eq:HR}
\sup_{(t,x),(s,y) \in Q'} \frac{\vert u(t,x) - u(s,y) \vert}{\left( \vert x-y \vert + \vert t-s \vert^{1/\am} \right)^{\gamma}} \le \frac{\Vert u \Vert_{L^{\infty}(I \times \R^d)}}{\eta^{\gamma}},
\end{align}
where $\eta = \eta(Q,Q',\alpha_0) > 0$ is some constant.
\end{theorem}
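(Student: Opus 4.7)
The plan is to derive \autoref{thm:HR} from the weak Harnack inequality (\autoref{thm:weakHarnack}) by the classical iteration of oscillation decay, adapted to the anisotropic parabolic geometry of the boxes $M_r(x_0)$. The class $\cK'(\alpha_0,\Lambda)$ is stable under translations in $(t,x)$, under the sign change $u \mapsto -u$, and under the anisotropic scaling that sends $Q_r(t_0,x_0) := (t_0 - r^{\am}, t_0) \times M_r(x_0)$ to the unit cylinder $Q_1(0,0)$. After these normalizations it is enough to prove a single oscillation decay at the origin and then transport the result to an arbitrary $(t_0,x_0) \in Q' \Subset Q$, using the parameter $\eta = \eta(Q,Q',\alpha_0)$ to choose the base radius $r_0 \asymp \eta$.

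The key step is the following one-step decay: there exist $\theta \in (0,1)$ and $\sigma \in (0,1)$, depending only on $d$, $\alpha_0$, $\Lambda$, such that for every $n \in \N$,
\[ \osc_{Q_{\theta^{n+1}}} u \le \sigma\, \osc_{Q_{\theta^n}} u + C \theta^{n\gamma_0} \Vert u \Vert_{L^\infty(I \times \R^d)}, \]
for some $\gamma_0 > 0$. After rescaling $Q_{\theta^n}$ back to $Q_1$, this is obtained by a dichotomy: with $m = \inf_{Q_{\theta^n}} u$ and $M = \sup_{Q_{\theta^n}} u$, at least one of the level sets $\{u \ge (M+m)/2\}$, $\{u \le (M+m)/2\}$ fills at least half of $U_\ominus$. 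In the first case, $v = (u - m)_+$, truncated to zero outside $Q_{\theta^n}$, is a nonnegative supersolution of $\partial_t v - L v = \tilde f$ on $(-1,1) \times M_2(0)$, with a right-hand side $\tilde f$ that encodes the nonlocal interaction between $(u-m)_-$ and $\mu(x,\d y)$ outside the reference cylinder. Applying \autoref{thm:weakHarnack} to $v$ yields
\[ \Vert v \Vert_{L^1(U_\ominus)} \le c \bigl( \inf_{U_\oplus} v + \Vert \tilde f \Vert_{L^\infty} \bigr), \]
and since the dichotomy gives $\Vert v \Vert_{L^1(U_\ominus)} \ge c'(M-m)$, one reads off $\inf_{U_\oplus} v \ge c''(M-m) - C\Vert\tilde f\Vert_{L^\infty}$, hence $\osc_{U_\oplus} u \le (1-c'')(M-m) + C\Vert \tilde f\Vert_{L^\infty}$. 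Undoing the scaling produces the asserted decay of $\osc_{Q_{\theta^{n+1}}} u$; the second case is symmetric upon replacing $u$ by $-u$.

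Iteration of the one-step decay yields $\osc_{Q_{\theta^n}} u \le C \kappa^n \Vert u \Vert_{L^\infty(I \times \R^d)}$ for $\kappa = \max(\sigma, \theta^{\gamma_0})$, hence H\"{o}lder continuity at the origin with exponent $\gamma = \log(1/\kappa)/\log(1/\theta)$. Repeating at every $(t_0,x_0) \in Q'$ with base radius $r_0 \asymp \eta$ produces the pointwise bound in the metric $|x-y| + |t-s|^{1/\am}$, whose anisotropic refinement is captured by the box family $M_r$. The central obstacle is the control of the tail term $\tilde f$: because $L$ is nonlocal, $u - m$ fails to be a global supersolution, and the truncation needed to enforce nonnegativity on all of $(-1,1) \times \R^d$ produces an error given by the nonlocal moment of $(u-m)_-$ against $\mu(x,\d y)$ on the complement of $M_3(0)$. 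Here the moment bound \eqref{assmu4} enters decisively: its positive exponent $1/\Lambda$ ensures $\Vert \tilde f \Vert_{L^\infty} \le C \theta^{n\gamma_0} \Vert u \Vert_{L^\infty(I \times \R^d)}$, so that the tail errors form a summable geometric series under the iteration. Keeping track of this summability uniformly in $\alpha_1,\ldots,\alpha_d \in [\alpha_0,2)$, which is the prerequisite for the robustness asserted in the theorem, is the most delicate aspect of the argument.
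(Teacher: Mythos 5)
Your overall architecture matches the paper: translate and rescale to a reference cylinder, prove a one-step oscillation decay via the weak Harnack inequality, control the nonlocal tail using \eqref{assmu4}, and iterate to get geometric decay of oscillation, from which the pointwise estimate in the metric $|x-y|+|t-s|^{1/\am}$ follows by the usual dyadic argument. The paper does exactly this, via \autoref{thmCorHarnack}, \autoref{thmHRhelp} and \autoref{thmoscdecay}, followed by a scaling argument in the proof of \autoref{thm:HR}.

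However, there is a genuine gap in the mechanism by which you control the tail term. You propose to take $v = (u-m)_+$ \emph{truncated to zero outside $Q_{\theta^n}$}, and claim that \eqref{assmu4} alone yields $\|\tilde f\|_{L^\infty} \le C\theta^{n\gamma_0}\|u\|_{L^\infty(I\times\R^d)}$. This does not follow: after rescaling $Q_{\theta^n}$ to the unit cylinder, the truncated negative part $(u-m)_-$ living outside $M_3$ is only bounded by $2\|u\|_{L^\infty}$, and \eqref{assmu4} then bounds the tail integral by a \emph{fixed} multiple of $\|u\|_{L^\infty}$, with no decay in $n$. The small parameter $\theta^{n\gamma_0}$ cannot be extracted from \eqref{assmu4}; it must come from somewhere else. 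The paper's resolution is precisely not to truncate: instead, the inductive oscillation bounds at \emph{all} coarser scales $\hat D(6^{-\nu})$, $\nu \le k-1$, are used to show that the rescaled and recentered function $v$ grows at infinity at most like $2(6\hat\rho)^{\gamma}-1$ (this is the fourth hypothesis in \autoref{thmHRhelp}), i.e.\ polynomially with the small exponent $\gamma \le \gamma_0$. Only then does \eqref{assmu4}, with $\gamma_0 < 1/\Lambda$, bound the nonlocal moment of $v^-$ by a fixed $\eps_0$, independent of the scale. In other words, the tail error stays uniformly at size $\eps_0$ by induction, and the contraction of the oscillation comes from the H\"older exponent $\gamma$ being tuned so that $1-\delta/2 \le 6^{-\gamma}$; there is no geometric decay of the tail error of the form you posit. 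To make your argument rigorous you would need to replace the truncation by the inductive polynomial growth bound on the far field of $v$, which is the core of \autoref{thmHRhelp}. As written, your sketch omits this step and attributes its effect to \eqref{assmu4} alone.
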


\begin{remark}{\ }\vspace*{-1ex} 
\begin{itemize}
\item[(i)] \autoref{thm:weakHarnack} and \autoref{thm:HR} respectively the two estimates \eqref{eq:weakHarnack} and \eqref{eq:HR} are robust in the following sense. The constants $c, \gamma, \eta$ depend on $\alpha_1,\ldots,\alpha_d$ only through the lower bound $\alpha_0$. Choosing $\alpha_k^n$ with $\alpha_k^n \nearrow 2$ as $n \to \infty$ for some (or all) $k \in \{1,\ldots,d \}$, the estimates hold true with constants independent of $n$.
\item[(ii)] We choose to present the regularity estimate \eqref{eq:HR} in a way that does not reflect the possible anisotropy of the problem. Since the decay-of-oscillation result \autoref{thmoscdecay} captures the anisotropy, one could easily derive an anisotropic version of \eqref{eq:HR}. In terms of further applications like compactness results, such a version does not seem to be very important to us. 
\item[(iii)] We point out that for \autoref{thm:weakHarnack} it suffices if \autoref{assumption:tail} is satisfied only for $\rho \in (0,2)$. However, the proof of \autoref{thm:HR} requires some additional knowledge about the decay of the tails of $(\mu(t,x,\cdot))_{(t,x) \in I \times \R^d}$ at infinity.
\end{itemize}

\end{remark}
		
\subsubsection*{Related literature}
In the sequel we comment on related results in the literature. Let us first mention results related to systems of jump processes driven by stable processes and to nonlocal operators with singular jump intensities. Several works have studied systems of Markov jump processes given by $\d Y^i_t = A_{ik} (Y_{t-}) \d Z^k_t$, where $Z^k_t$ $(k=1,\ldots, d)$ are one-dimensional independent symmetric $\alpha_k$-stable components. In the case 
$\alpha= \alpha_k$ for all $k$, \cite{BaCh06} proves unique weak solvability with the help of the martingale problem. Strong solvability for such systems including irregular drifts are considered in \cite{CZZ17}. Under additional assumptions on the coefficients, H\"{o}lder regularity of corresponding harmonic functions is established in \cite{BaCh10}. In \cite{DeFo13} the transition probability is shown to have a density. \cite{KRS18} establishes the strong Feller property for the corresponding stochastic jump process. In \cite{KuRy18} two-sided heat kernel estimates are obtained if the matrix $(A_{ik})$ is diagonal and the diagonal elements are bounded and H\"{o}lder continuous. \\
Systems of jump processes have also been investigated when the indices of stability $\alpha_k$ are different. Regularity of corresponding harmonic functions is extended in \cite{Cha16}, making use of the approach in \cite{BaCh10}. For such systems, the results of \cite{DeFo13} are extended in \cite{FPR18}. Uniqueness and existence of weak solutions is proved in \cite{Cha19} under the additional assumption that the matrix $(A_{ik})$ is diagonal. In the case of independent L\'{e}vy processes, e.g. with different indices of stability, regularity properties of the transition semigroup are studied in \cite{KKR22a}, \cite{KKR22b}, and \cite{KPP23}.

The aforementioned works address Markov processes resp. integro-differential operators that, in general, are not symmetric with respect to the Lebesgue measure. For symmetric Markov processes resp. nonlocal operators in divergence form, the approach via Dirichlet forms resp. variational calculus has proven to be successful. One important feature of these approaches is that one does not need to impose further regularity assumptions on the coefficients other than measurability and boundedness. In the case where the jumping measure is absolutely continuous and has isotropic bounds, H\"{o}lder regularity of weak solutions to nonlocal equations corresponding to symmetric Dirichlet forms is proved in \cite{BaLe02}, \cite{ChKu03} resp. in \cite{Kas09}, \cite{DKP14}, \cite{DKP16}, \cite{FeKa13}, \cite{DyKa15}. See \cite{KaWe22a, KaWe22b} for extensions to nonsymmetric jumping kernels. Note that the current work includes the regularity results from \cite{DyKa15, ChKa20} resp.  \cite{KaWe22a} when assuming symmetry of $\mu$. As explained in \autoref{rem:classes}, in order to recover those results one chooses all exponents $\alpha_k$ to be equal. The singular case with identical exponents $\alpha_k$ is covered in \cite{KaSc14, DyKa15}. The case of different exponents is covered in \cite{ChKa20}. If all exponents $\alpha_k$ are equal, sharp lower pointwise heat kernel estimates are proved in \cite{Xu13}. Sharp upper bounds are proved in \cite{KKK19} without any further regularity assumptions on the coefficients. Moreover, we refer to \cite{ChKi22,CKW23,DuBo23} for results on nonlinear anisotropic nonlocal operators.

The present work extends \cite{ChKa20} in three ways. First, our main results \autoref{thm:weakHarnack} and \autoref{thm:HR} establish local regularity properties for \emph{parabolic} nonlocal problems. Second, our results are \emph{robust} in the sense that all important constants remain bounded if some of the indices $\alpha_k$ approach the limit value $2$, see \autoref{sec:mu-axes}. As a result of this feature we are able to include \emph{mixed local-nonlocal} operators that, with respect to some coordinates, are of second order, see \autoref{thm:genmainresult-ln}.

Note that the expression \emph{mixed local-nonlocal} is also used in  context with related operators of different nature such as $-\Delta + (-\Delta)^{\alpha/2}$. Different from the operator \eqref{eq:frac_aniso_laplace}, the operator $-\Delta + (-\Delta)^{\alpha/2}$ is the sum of two uniformly elliptic operators, which both have regularizing properties, whereas the nonlocal and local parts in \eqref{eq:frac_aniso_laplace} are both degenerate. Only through the summation of the two parts, \eqref{eq:frac_aniso_laplace} becomes a uniformly elliptic operator and solutions have certain regularity properties. Another important difference is that harmonic functions with respect to $-\Delta + (-\Delta)^{\alpha/2}$ clearly satisfy a Harnack inequality, but, due to the severe anisotropy of \eqref{eq:frac_aniso_laplace}, solutions lack this property in case of \eqref{eq:frac_aniso_laplace}. Thus, the full Harnack inequality is not relevant for our work.

While operators of the form \eqref{eq:frac_aniso_laplace} have not yet been studied from the viewpoint of regularity (\cite{FaVa17}, \cite{FaVa19} being two exceptions), there has been a lot of research on mixed local-nonlocal operators in the sense described above. An early result on the weak Harnack inequality and Hölder regularity is \cite{Kas03}. Let us list more recent works including time-dependent and nonlinear variations of such problems. For instance, a De Giorgi-Nash-Moser-type theory has been developed in \cite{Foo09a,Foo09b,GaKi22,GaKi23,FSZ22,Nak22,Nak23,ShZh23,BLS23,
GKK23}. Moreover, we refer to \cite{DeMi22,BySo23,APT23,BiPr23,Das23,AnCo23} for the study of gradient regularity and to \cite{BDVV21,BDVV22,BMS23,DFZ23,Gof22,BMP23} for other related results.

\subsubsection*{Outline}
This article consists of seven sections and one appendix, and is organized as follows. In \autoref{sec:Prel}, we give a number of important definitions and introduce the concept of weak solutions. 
\autoref{sec:logineq} provides a technical inequality for the logarithm of supersolutions to the parabolic equation $\partial_t u-Lu = f$. In \autoref{sec:Moser} we present the 
anisotropic Moser iteration scheme for negative and small positive exponents. Finally, \autoref{sectionHI} contains the proof of the weak Harnack inequality and \autoref{sectionHR} the proof of the Hölder regularity estimate. In \autoref{sec:examples} we present examples of appropriate families of measures. Finally, in the appendix, \autoref{sec:Sal}, we prove that a weak Poincar\'e inequality implies a strong Poincar\'e inequality in a very general setting. We consider this result to be of independent interest.

\section{Preliminaries}\label{sec:Prel}

This section contains several important definitions and provides some technical results.
Furthermore, we introduce the concept of weak solutions, set up notation and terminology, prove an auxiliary lemma about cut-off functions, and deduce a weighted Poincar\'e inequality.

From now on let $d_1,d_2 \in \{0,1,\dots,d\}$ with $d_1 + d_2 = d$ and $\alpha_1,\dots,\alpha_{d_1} \in [\alpha_0,2)$ be given for some $\alpha_0 \in (0,2)$. Moreover, we set $\alpha_k = 2$ for any $k \in \{d_1+1,\dots,d\}$. Let $\am = \max\{\alpha_k |\, 1 \leq k \leq d \}$. 
Throughout this article $\Omega$ always denotes a bounded domain in $\R^d$ and $I$ an open, bounded interval in $\R$. 

Let us first discuss \autoref{assumption:tail}. Note that \eqref{def:E_r} implies that for every $\mu$ satisfying \autoref{assumption:tail}, it holds for every $t \in I$ $x_0 \in \R^d$ and $\rho \in (0,2)$:
\begin{align}
\label{assmu1}
\mu(t,x_0, \R^d \setminus M_{\rho}(x_0)) \le \sum_{k = 1}^{d} \mu(t,x_0, \R^d \setminus E_{\rho}^k(x_0)) \le \Lambda \sum_{k = 1}^{d} (2-\alpha_k) \rho^{-\am} \le c \rho^{-\am},
\end{align}
where $c = 2d_1\Lambda$ does not depend on $\alpha_k$. 
This line will be of importance in many upcoming arguments and plays a similar role as Assumption 3 in \cite{ChKa20}. 
Furthermore, note that Assumption ($K_1$) in \cite{FeKa13} implies an analogous condition in the isotropic setting. 
However, \eqref{assmu1} does not imply \eqref{assmu1new}. This is due to the fact that for fixed $k \in \lbrace 1,\dots,d \rbrace$, the right-hand side of \eqref{assmu1new} converges to zero as $\alpha_k \nearrow 2$, whereas this behavior cannot be concluded from the properties of \eqref{assmu1}. Note that for our reference measure $\ma$, the constant in \eqref{assmu1} can be computed exactly:
\begin{align}
\ma(x_0, \R^d \setminus M_{\rho}(x_0)) = 2 \sum_{k=1}^{d_1} \frac{2-\alpha_k}{\alpha_k} \rho^{-\am} ~~ \forall \rho > 0 \,. \label{eq:tail-mu-axes} 
\end{align}

\subsection{Function spaces and weak solution concept}

We define the following function spaces
\begin{align*}
H_{\Omega}(\R^d) & = \Big\{ u: \,\R^d\to\R \text{ mb.}  \quad | \quad 
u\equiv 0 \text{ on } \R^d\setminus\Omega, \quad \|u\|_{H_{\Omega}(\R^d)}^2 = \|u\|_{L^2(\Omega)}^2 + 
\sum_{k=1}^d\mathcal{E}^{(k)}(u,u) <\infty \Big\},\\
H(\Omega) &= \Big\{ u : \Omega \to \R \text{ mb.}  \quad | \quad   \| u \|_{H(\Omega)}^2 = \| u \|_{L^2(\Omega)}^2 + \sum_{k=1}^d\mathcal{E}^{(k)}_{\Omega}(u,u) < \infty \Big\},\\
V(\Omega | \R^d) &= \Big\{ u : \Omega \to \R \text{ mb.}   \quad | \quad   \| u \|_{V(\Omega | \R^d)}^2 = \| u \|_{L^2(\Omega)}^2 + \sum_{k=1}^d [u]^2_{V^{(k)}(\Omega | \R^d)} < \infty \Big\}.
\end{align*}
Here, we set
\begin{align*}
\cE_{\Omega}^{(k)}(u,u) &= 
\begin{cases}
\int_{\Omega}\int_{\Omega} (u(x) - u(y)) \mu^{(k)}(x,\d y) \d x ~~ \text{ for } k = 1,\dots,d_1,\\
\int_{\Omega} |\partial_k u(x)|^2 \d x~~ \text{ for } k = d_1+1,\dots,d,
\end{cases},\\
[u]_{V^{(k)}(\Omega | \R^d)}  &= 
\begin{cases}
\int_{\Omega}\int_{\R^d} (u(x) - u(y)) \mu^{(k)}(x,\d y) \d x ~~ \text{ for } k = 1,\dots,d_1,\\
\int_{\Omega} |\partial_k u(x)|^2 \d x~~ \text{ for } k = d_1+1,\dots,d,
\end{cases},\\
\ma(x,\d y)&= \sum_{k=1}^{d_1} \mu^{(k)}(x,\d y) = \sum_{k=1}^{d_1} \Big( (2-\alpha_k) |x_k-y_k|^{-1-\alpha_k}\,  \d y_k\prod_{i\neq k}\delta_{\{x_i\}}(\d y_i) \Big).
\end{align*}

Let us denote by $H_{loc}(\Omega)$ the space of all functions $v : \Omega \rightarrow \R$ such that $v\Phi \in H(\Omega)$ for every $\Phi \in C_c^{\infty}(\Omega)$. In particular, a function $v : \Omega \rightarrow \R$ satisfies $v \in H_{loc}(\Omega)$ if and only if $v \in H(\Omega')$ for every $\Omega' \Subset \Omega$.\\
In the following we introduce the concept of a local weak (super-/sub-/) solution to $\partial_t - L = f$. The according definition is inspired by \cite{FeKa13}.

\begin{definition}[Local weak solution]\label{defweaksolution}
Assume $Q = I \times \Omega \subset \R^{d+1}$ and $f \in L^{\infty}(Q)$. We say that $u \in L^{\infty}(I;L^{\infty}(\R^d))$ is a supersolution of 
\begin{align}
\partial_t u - Lu = f,~~ \text{ in } I \times \Omega = Q,
\end{align}
if
\vspace{-0.2cm}
\begin{itemize}
\item[(i)] there exist $x_0 \in \R^d$ and $r \in (0,3]$ such that $\Omega \subset M_r(x_0)$ and 
\begin{align*}
u \in C_{loc}(I;L^2_{loc}(\Omega)) \cap L^2_{loc}(I;H_{loc}(M_r(x_0))),
\end{align*}

\item[(ii)] for every subdomain $\Omega' \Subset \Omega$ and every subinterval $[t_1,t_2] \subset I$ and for every non-negative test function $\phi \in H^1_{loc}(I;L^2(\Omega')) \cap L^2_{loc}(I;H_{\Omega'}(\R^d))$,
\begin{align}
\label{weaksol}
\begin{aligned}
\int_{\Omega'} \phi(t_2,x)u(t_2,x) \d x &- \int_{\Omega'} \phi(t_1,x)u(t_1,x) \d x - \int_{t_1}^{t_2} \int_{\Omega'} u(t,x) \partial_t \phi(t,x) \d x \d t \\
& + \int_{t_1}^{t_2} \cE^t(u,\phi) \d t \ge \int_{t_1}^{t_2} \int_{\Omega'} f(t,x)\phi(t,x) \d x \d t.
\end{aligned}
\end{align}
\end{itemize}
Sometimes we write $\partial_t u - L u \ge f$ in $I \times \Omega$ whenever $u$ is a supersolution in the sense of the foregoing definition. Subsolutions and solutions are defined analogously, except for the fact that in the definition of a solution, there is no restriction on the sign of the test function $\phi$.
\end{definition}


Note that by construction, weak solutions satisfy $u \in V(\Omega | \R^d)$. The following lemma justifies that all terms in \eqref{weaksol} are well-defined.

\begin{lemma}\label{lemma:finiteenergweaksol}
Let $(\mu,A) \in \cK(\alpha_0,\Lambda)$. Under the assumptions in \autoref{defweaksolution}:
\[\int_{t_1}^{t_2} \vert \cE^t(u,\phi) \vert \d t < \infty ~~ \forall [t_1,t_2] \subset I.\] 
\end{lemma}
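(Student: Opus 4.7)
The plan is to split $\cE^\mu_t(u,\phi)$ into a local bilinear form on a ball that sits compactly between $\Omega'$ and the set $M_r(x_0)$ from item~(i) of \autoref{defweaksolution}, plus a pure tail term. Since $\overline{\Omega'}\subset \Omega \subset M_r(x_0) \subseteq M_3(x_0)$ and $\overline{\Omega'}$ is compact, we may fix $\rho \in (0,3)$ with $\overline{\Omega'} \subset M_\rho(x_0) \Subset M_r(x_0)$. Using that $\phi(t,\cdot)$ vanishes outside $\Omega' \subset M_\rho(x_0)$ and invoking the symmetry in \autoref{assumption:symmetry} to identify the two cross-integrals over $\Omega' \times (\R^d \setminus M_\rho(x_0))$ and $(\R^d \setminus M_\rho(x_0)) \times \Omega'$, a direct computation gives
\begin{align*}
\cE^{\mu}_t(u,\phi) = \cE^{\mu_t}_{M_\rho(x_0)}(u(t),\phi(t)) + 2 \int_{\Omega'} \int_{\R^d \setminus M_\rho(x_0)} (u(t,x) - u(t,y)) \phi(t,x) \, \mu_t(x,\d y) \, \d x.
\end{align*}

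For the local part, Cauchy--Schwarz together with $a \le 1$ and \autoref{assumption:comparability} yields
\begin{align*}
\bigl|\cE^{\mu_t}_{M_\rho(x_0)}(u(t),\phi(t))\bigr| \le \Lambda \, \cE^{\ma}_{M_\rho(x_0)}(u(t),u(t))^{1/2} \, \cE^{\ma}_{M_\rho(x_0)}(\phi(t),\phi(t))^{1/2}.
\end{align*}
Integrating in $t$ and applying Hölder's inequality in time bounds this by a product of $\bigl(\int_{t_1}^{t_2}\cE^{\ma}_{M_\rho(x_0)}(u(t),u(t))\,\d t\bigr)^{1/2}$ and the corresponding quantity for $\phi$. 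The first factor is finite because $u \in L^2_{\loc}(I; H^{\ma}_{\loc}(M_r(x_0)))$ and $M_\rho(x_0) \Subset M_r(x_0)$, while the second is finite since $\phi \in L^2_{\loc}(I; H^{\ma}_{\Omega'}(\R^d))$ and $\cE^{\ma}_{M_\rho(x_0)}(\phi,\phi) \le \cE^{\ma}(\phi,\phi) < \infty$.

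For the tail part, since $\overline{\Omega'} \Subset M_\rho(x_0)$ one can choose $\rho_0 > 0$ so small that $M_{\rho_0}(x) \subset M_\rho(x_0)$ for every $x \in \Omega'$. Combining the crude bound $|u(t,x) - u(t,y)| \le 2\|u\|_{L^\infty(I \times \R^d)}$ with $a \le 1$ and the uniform tail estimate \eqref{assmu1} (a consequence of \autoref{assumption:tail}), one gets $\mu_t(x, \R^d \setminus M_\rho(x_0)) \le \mu(x, \R^d \setminus M_{\rho_0}(x)) \le c\,\rho_0^{-\am}$ uniformly in $x \in \Omega'$. Hence
\begin{align*}
\left| \int_{\Omega'} \int_{\R^d \setminus M_\rho(x_0)} (u(t,x) - u(t,y)) \phi(t,x) \, \mu_t(x, \d y) \, \d x \right| \le 2c\,\rho_0^{-\am} \|u\|_{L^\infty(I\times \R^d)} |\Omega'|^{1/2} \|\phi(t)\|_{L^2(\Omega')},
\end{align*}
which is integrable over $(t_1,t_2)$ since $\phi \in L^2_{\loc}(I; L^2(\Omega'))$. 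Summing the two bounds gives the claim.

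The main technical point is the decision to cut at the intermediate ball $M_\rho(x_0)$ rather than directly at $\Omega'$. Cutting at $\Omega'$ would require controlling $\mu(x, \R^d \setminus \Omega')$ for $x$ arbitrarily close to $\partial\Omega'$, which is not uniformly available because the anisotropic rectangles $M_{\rho_0}(x)$ need not fit inside a generic open subdomain $\Omega'$. The ball $M_\rho(x_0)$ is instead tailored to both tools we have: the comparability \autoref{assumption:comparability} holds on it, and the tail estimate \eqref{assmu1} yields a uniform bound outside of it.
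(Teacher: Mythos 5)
Your proof is correct and follows essentially the same route as the paper: both split $\cE^\mu_t(u,\phi)$ at an intermediate set compactly contained in $M_r(x_0)$ and containing $\Omega'$ (the paper uses $M_{r-\eps}(x_0)$ where you use $M_\rho(x_0)$), bound the local piece via Cauchy--Schwarz, $a\le 1$ and \autoref{assumption:comparability}, and bound the tail piece using $\|u\|_{L^\infty}$ together with the uniform tail estimate \eqref{assmu1} applied to $\mu(x,\R^d\setminus M_{\rho_0}(x))$ with $\rho_0$ small enough that $M_{\rho_0}(x)\subset M_\rho(x_0)$ for all $x\in\Omega'$ (the paper's $M_\eps(x)$ plays the role of your $M_{\rho_0}(x)$). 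Your closing remark about why one cuts at $M_\rho(x_0)$ rather than at $\Omega'$ is exactly the point the paper's choice of $\eps$ is designed to address.
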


\begin{proof}
Let $[t_1,t_2] \subset I$. Note that there is $\eps \in (0,2)$ small enough such that $\Omega' \Subset M_{r-\eps}(x_0) \Subset M_r(x_0)$ and $M_{\eps}(x) \cap (\R^d \setminus M_{r-\eps}(x_0)) = \emptyset$ for every $x \in \Omega'$. 
By \eqref{assmu0}, \eqref{assmu1}, and \eqref{eq:fct-space-comp},
\begin{align*}
\vert & \cE_t^{\mu}(u,\phi)\vert \le \int_{M_{r-\eps}(x_0)} \int_{M_{r-\eps}(x_0)} \vert u(t,x)-u(t,y)\vert \vert \phi(t,x)-\phi(t,y)\vert \mu(t,x,\d y) \d x  \\
& \hspace*{4cm} + 2 \int_{ M_{r-\eps}(x_0)} \int_{\R^d \setminus M_{r-\eps}(x_0)} \phi(t,x) \vert u(t,x)-u(t,y)\vert \mu(t,x,\d y) \d x\\
& \le \Lambda \Vert u(t) \Vert_{H^{\ma}(M_{r-\eps}(x_0))} \Vert \phi(t) \Vert_{H^{\ma}(M_{r-\eps}(x_0))} \\
& \hspace*{4cm} + 4 \Vert u(t) \Vert_{L^{\infty}(\R^d)} \int_{\Omega'} \phi(t,x) \mu(t,x,\R^d \setminus M_{\eps}(x)) \d x \\
& \le c \Vert \phi(t) \Vert_{H^{\ma}(\R^d)} \left(\Vert u(t) \Vert_{H^{\ma}(M_{r-\eps}(x_0))} + \Vert u(t) \Vert_{L^{\infty}(\R^d)} \right),
\end{align*}
where $c = c(\Lambda,d,\eps) > 0$. Since $u \in L^{\infty}(I;L^{\infty}(\R^d))$, it holds that $\int_{t_1}^{t_2} \vert \cE^{\mu}_t(u,\phi) \vert \d t < \infty$.
\end{proof}

The concept of weak solutions requires a careful study of admissibility of test functions. The following remark justifies the usage of those test functions that we actually are working with. 

\begin{remark}
We will not use \eqref{weaksol} directly but we work with the following inequality
\begin{align}
\label{weaksol2}
\int_{\Omega'} \partial_t u(t,x)\phi(t,x)\d x + \cE^{\mu(t)}(u,\phi) \ge \int_{\Omega'} f(t,x)\phi(t,x)\d x
\end{align}
for a.e. $t \in I$, applied to test functions of the form $\phi(t,x) = \psi(x)u^{-q}(t,x)$, where $q \in \R$, $u$ is a positive supersolution in $I \times \Omega$ and $\psi$ is a suitable cut-off function (see \autoref{thmlogu}, \autoref{thmit1} and \autoref{thmit2}). In order to test with $\phi$ for $q > 0$ we assume in addition that $u > \eps$ for some $\eps > 0$. In particular, we assume that $u$ is a.e. differentiable in $t$. We should mention that null sets in time will change \eqref{weaksol2}. However this will not affect our computations since we integrate the quantities in time.
Such alternative formulation has been already used in the article \cite{FeKa13}. For an explanation why this approach is legitimate, we refer the reader to \cite[Lemma 7.1]{KaWe22b}, and the discussion in \cite[Chapter 7.4]{Wei22}. See also \cite[p. 27-28]{FeKa13}. 
At this point, we emphasize that \eqref{assmu1} is needed for the derivation of \eqref{weaksol2}. \\

\end{remark}

Let us introduce cylindrical domains as follows.
\begin{definition}[Cylindrical domains]
For $r>0$, $t\in\R$ and $x\in\R^d$, we define
\[ I_r(t) := (t - r^{\am}, t + r^{\am}) \quad \text{ and } \quad  Q_r(t,x) = I_r(t) \times M_r(x), \]
where $M_r(x)$ is defined as in \autoref{def:M_r}. Moreover, we define
\begin{align*}
& I_{\oplus}(r) = (0,r^{\am}) \quad \text{ and } \quad I_{\ominus}(r) = (-r^{\am},0),\\
&  Q_{\oplus}(r) = I_{\oplus}(r) \times M_r \quad \text{ and } \quad Q_{\ominus}(r) = I_{\ominus}(r) \times M_r.
\end{align*}
\end{definition} 

The following result describes the scaling behavior of supersolutions $u$ to $\partial_t u - L u = f$.

\begin{lemma}[Scaling properties]
\label{thmscaling}
Let $\Lambda \ge 1$, $\alpha_0 \in (0,2)$, $\tau \in \R$, $\xi \in \R^d$ and $r > 0$. 
Furthermore, let $(\mu(t,x,\cdot))_{(t,x) \in I \times \R^d}$ be a family of measures satisfying \eqref{assmu0} and
\begin{itemize}
\item \eqref{assmu1new} for every $x_0 \in \R^d$, $k \in \lbrace 1,\dots,d\rbrace$:
\begin{align}
\mu(t,x_0, \R^d \setminus E_{\rho}^k(x_0)) \le 
\begin{cases}
\Lambda (2-\alpha_k)\rho^{-\am}, &\rho \in (0,2r),\\
\Lambda (2-\alpha_k)\rho^{-\theta}, &\rho > 2r.
\end{cases}
\end{align}
\item \autoref{assumption:func-ineq} for every $x_0 \in \R^d$ and $\rho \in (0,3r)$,
\end{itemize}
Assume $f\in L^{\infty}(Q_r(\tau,\xi))$. Let $u$ be a supersolution to $\partial_t u -L u = f$ in $Q_r(\tau,\xi)$.
Define $(\tilde{t},\tilde{x}) = (r^{\am} t + \tau, \left( r^{\am/\alpha_k}x_k + \xi_k \right)_{k=1}^d)$. Then $\U(t,x) := u(\tilde{t},\tilde{x})$ satisfies
\begin{align*}
&\left( \int_{M_1} \phi(t,x) \U(t,x) \right)^{1}_{t=-1} - \int_{Q_1(0,0)} \U(t,x) \partial_t \phi(t,x) \d x \d t\\
& + \int_{-1}^{1} \cE^{\tilde{\mu}(t)}(\tilde{u},\phi) + \cE^{\tilde{A}(t)}(\tilde{u},\phi) \d t \ge \int_{Q_1(0,0)} \tilde{f}(t,x) \phi(t,x) \d x \d t
\end{align*}
for every non-negative test function $\phi \in H^1((-1,1); L^2(M_1)) \cap L^2((-1,1); H_{M_1}^{\ma}(\R^d))$, where $\tilde{f}(t,x) = r^{\am} f(\tilde{t},\tilde{x})$ and
\begin{align*}
\tilde{\mu}(t,x, \d y) =  r^{\am} \mu^{*}(\tilde{t},\tilde{x},\d y), \qquad \tilde{A}(t,x) = A(\tilde{t},\tilde{x})
\end{align*}
with $\mu^{*}(t,x,\tilde{D}) := \mu(t,x,D)$, where $\tilde{D} = \lbrace \tilde{y} : y \in D \rbrace$, $D \subset \R^d$ measurable, is the push-forward measure of $\mu(t,x,\cdot)$. \\
In particular, $\U$ solves $\partial_t \U - \tilde{L} \U \ge \tilde{f}$ in $Q_1(0,0)$, where $\tilde{L}$ is associated to $\tilde{\cE}:= \cE^{\tilde{\mu}} + \cE^{\tilde{A}}$. Furthermore, it holds that $(\tilde{\mu},\tilde{A}) \in \cK(\alpha_0,\Lambda)$.
\end{lemma}

\begin{remark}
Let us briefly explain in two steps how the foregoing scaling properties go into the proof of \autoref{thm:HR}.
\begin{itemize}
\item[(i)] In the setting of \autoref{thmscaling}, let $Q = I \times \Omega$ with $Q \Supset Q_r(\tau,\xi)$. Let $u$ be a supersolution to $\partial_t u - L u = f$ in $Q$. It is easy to see that in this case, $u$ is a supersolution to the same equation in $Q_r(\tau,\xi)$ and the \autoref{thmscaling} can be applied.
\item[(ii)] Apparently, scaling behaves similarly when it is applied directly to domains $Q$ of the more general form $Q = (\tau - c_1 r^{\am},\tau) \times M_{c_2 r}(\xi)$, where $c_1,c_2 > 0$. In the situation of (i) with this choice of $Q$ and by defining $\tilde{t},\tilde{x},\U,\tilde{f},\tilde{\mu},\tilde{L}$ as in \autoref{thmscaling}, we obtain that $\U$ is a solution to $\partial_t \U - \tilde{L} \U \ge \tilde{f}$ in $(-c_1,0) \times M_{c_2}$. 
This observation goes into the proof of \autoref{thm:HR}.
\end{itemize}
\end{remark}

The main auxiliary result in the proof of the weak Harnack inequality is the following result by Bombieri and Giusti, see \cite[Lemma 2.2.6]{Sal02}. 

\begin{lemma}[Lemma by Bombieri and Giusti]
\label{thmBG}
Let $m, c_0 > 0$, $\theta \in [1/2,1]$, $\eta \in (0,1)$ and $0<p_0\leq \infty$. Let $(U(r))_{\theta\leq r\leq 1}$ be a non-decreasing family of domains $U(r)
\subset \R^{d+1}$. Furthermore, assume that $w : U(1) \rightarrow (0,\infty)$
is a measurable function which satisfies for every $s > 0$:
\begin{align} 
\label{BGAss1}
\vert U(1) \cap \{ \log w > s \} \vert \le \frac{c_0}{s}
\vert U(1) \vert.
\end{align}
Additionally suppose that for all $r,R \in [\theta,1], r<R$ and for all $p \in (0,1 \wedge \eta p_0)$
\begin{align} 
\label{BGAss2}
\Bigl( \int_{U(r)} w^{p_0} \Bigr)^{1/p_0} \le \left( \frac{c_0}{(R-r)^m
\vert U(1)\vert}\right)^{1/p-1/p_0} \Bigl( \int_{U(R)} w^p \Bigr)^{1/p} < \infty.
\end{align}
Then there is a constant $C=C(\theta,\eta,m,c_0, p_0)$ such that
\begin{align} 
\label{BGRes}
 \Bigl( \int_{U(\theta)} w^{p_0} \Bigr)^{1/p_0} \le C \vert U(1) \vert^{1/p_0}\ .
\end{align}
\end{lemma}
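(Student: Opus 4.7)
Proof plan. The statement is the classical Bombieri--Giusti lemma; its proof combines an interpolation argument based on \eqref{BGAss1} with iteration of \eqref{BGAss2} in the radial parameter. I sketch the standard argument, whose details can be found in \cite[pp.\ 731--733]{Sal02}.

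First, fix $p \in (0, 1 \wedge \eta p_0)$ and abbreviate $\phi(r) := \bigl(\int_{U(r)} w^{p_0}\bigr)^{1/p_0}$. Assumption \eqref{BGAss2} already guarantees $\phi(r) < \infty$ for every $r \in [\theta, 1)$. For any $\theta \le r < R \le 1$ and $s > 0$, split
\[
\int_{U(R)} w^p \;=\; \int_{\{w \le e^s\}\cap U(R)} w^p + \int_{\{w > e^s\}\cap U(R)} w^p.
\]
The low part is bounded trivially by $e^{sp}|U(1)|$. For the high part, H\"older's inequality with conjugate exponents $p_0/p$ and $p_0/(p_0-p)$, together with \eqref{BGAss1}, gives
\[
\int_{\{w > e^s\}\cap U(R)} w^p \;\le\; \phi(R)^p \Bigl(\frac{c_0\,|U(1)|}{s}\Bigr)^{1 - p/p_0}.
\]

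Substituting into \eqref{BGAss2}, raising to the power $1/p$, and using subadditivity of $x \mapsto x^{1/p}$ (valid since $p \le 1$), I would obtain, with the shorthand $a := 1/p - 1/p_0$,
\[
\phi(r) \;\le\; C_1\, e^s\,(R-r)^{-ma}\,|U(1)|^{1/p_0} \;+\; C_2\, (R-r)^{-ma}\, s^{-a}\,\phi(R),
\]
for all $s > 0$, where $C_1, C_2$ depend only on $c_0, p, p_0$. For each pair $(r,R)$ I then choose $s = s(R-r)$ so that the second coefficient equals $1/2$ (forcing $s \asymp (R-r)^{-m}$), which converts the above into the recursive inequality
\[
\phi(r) \;\le\; H(R-r)\,|U(1)|^{1/p_0} \;+\; \tfrac{1}{2}\,\phi(R),
\]
where $H(\sigma) = C_1\, \sigma^{-ma}\exp\!\bigl(C_3\, \sigma^{-m}\bigr)$ is an explicit function growing rapidly as $\sigma \downarrow 0$.

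Finally, iterate along a sequence $\theta = r_0 < r_1 < \cdots \nearrow 1$, summing the geometric factors $1/2$ to arrive at
\[
\phi(\theta) \;\le\; |U(1)|^{1/p_0}\sum_{n=0}^{\infty} 2^{-n}\, H(r_{n+1}-r_n) \;+\; \liminf_{n \to \infty} 2^{-n}\,\phi(r_n).
\]
Applying \eqref{BGAss2} to each pair $(r_n,(r_n+1)/2)$ provides an a priori bound on $\phi(r_n)$ that is polynomial in $(1-r_n)^{-1}$, so the limit term vanishes as long as $r_n$ is not chosen too fast. The main obstacle, and the technical heart of the Bombieri--Giusti argument, is to choose the sequence $(r_n)$ in such a way that the rapidly growing contributions $H(r_{n+1}-r_n)$ are still summable against the geometric absorption factors $2^{-n}$; the classical balancing (as presented in \cite{Sal02}) accomplishes this and produces the constant $C = C(\theta, \eta, m, c_0, p_0)$ asserted in \eqref{BGRes}.
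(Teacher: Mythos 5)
The paper does not prove this lemma; it cites Saloff-Coste for the argument, so I am evaluating your sketch on its own terms. The opening steps are fine: splitting $\int_{U(R)} w^p$ at a threshold $e^s$, bounding the low part trivially and the high part by H\"older together with \eqref{BGAss1} is exactly right (one slip: for $p\le 1$ the map $x\mapsto x^{1/p}$ is super-, not sub-additive, so you need the quasi-additivity $(x+y)^{1/p}\le 2^{1/p-1}(x^{1/p}+y^{1/p})$; harmless). The derived inequality $\phi(r)\le C_1 e^s(R-r)^{-ma}|U(1)|^{1/p_0}+C_2(R-r)^{-ma}s^{-a}\phi(R)$ is also correct.

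The gap is in the iteration. Fixing a single $p$ and choosing $s$ so that the absorption coefficient equals $1/2$ forces $s\asymp(R-r)^{-m}$, hence $H(\sigma)\gtrsim\exp(C\sigma^{-m})$. Summability of $\sum_n 2^{-n}H(r_{n+1}-r_n)$ then requires $(r_{n+1}-r_n)^{-m}\lesssim n$, i.e.\ $r_{n+1}-r_n\gtrsim n^{-1/m}$; but you also need $\sum_n(r_{n+1}-r_n)\le 1-\theta<\infty$, which forces $1/m>1$, i.e.\ $m<1$. So for $m\ge 1$ no choice of $(r_n)$ works, and the applications in this paper (\autoref{thmmoser1}, \autoref{thmmoser2}) have $m\ge 2$. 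The genuine Bombieri--Giusti mechanism is not a clever choice of radii but a different balancing: either $p$ is decreased along the iteration, or one argues by contradiction, setting $s\sim\log\bigl(\phi(\theta)/|U(1)|^{1/p_0}\bigr)$ rather than $s\sim(R-r)^{-m}$, and picking $p$ per step to make the two resulting terms comparable. With that balancing the cost per step is polynomial in $(R-r)^{-1}$, not exponential, and the sum converges. This is also why \eqref{BGAss2} must hold for an interval of $p$'s and why $\eta$ enters the final constant; with a single fixed $p$ the parameter $\eta$ would play no role.
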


The weak Harnack inequality (\autoref{thm:weakHarnack}) follows from a two-fold application of \autoref{thmBG}. 
Hence, the main difficulty is to show that the assumptions \eqref{BGAss1} and \eqref{BGAss2} are 
satisfied and that \autoref{thmBG} can be applied for supersolutions under consideration.
In order to verify these assumptions, we need to establish Moser iteration schemes for positive and negative exponents.

\subsection{Cut-off functions}
\label{cutofffct}
The goal of this subsection is to introduce a class of suitable cut-off functions and derive an anisotropic gradient estimate (see \autoref{thmcutoffest}) with the help of \autoref{assumption:symmetry} and \autoref{assumption:tail}.

\begin{definition}\label{def:testfct-ln}
We say that $(\tau_{x_0,r,\lambda})_{x_0,r,\lambda} \subset C^{1}(\R^d)$ is an admissible family of cut-off functions if there exists $c \ge 1$ such that for all $x_0 \in \R^d$, $r > 0$ and $\lambda > 1$, it holds that
\begin{itemize}
 \item $\supp(\tau_{x_0,r,\lambda})\subset M_{\lambda r}(x_0),$
 \item $ \|\tau_{x_0,r,\lambda}\|_\infty\leq 1,$
 \item $\tau_{x_0,r,\lambda}\equiv 1 \text{ on } M_r(x_0),$
 \item for all $k\in\{1,\dots,d_1\}$: $\|\partial_k \tau_{x_0,r,\lambda}\|_{\infty}\leq c(\lambda^{\am/\alpha_k}-1)^{-1} r^{-\am/\alpha_k}$ and 
 \item for all $k\in \{d_1+1,\dots,d\}$: $\|\partial_k \tau_{x_0,r,\lambda}\|_{\infty}\leq c(\lambda-1)^{-1} r^{-1}$.
\end{itemize}
\end{definition}

\begin{remark}
The existence of such admissible cut-off functions is standard.
We simply write $\tau$ for any such function from $(\tau_{x_0,r,\lambda})_{x_0,r,\lambda}$, if the respective choice of $x_0, r$ and $\lambda$ is obvious.
\end{remark}

An important property of such cut-off functions is given by the following lemma.

\begin{lemma}
\label{thmcutoffest}
There is $c_1 = c_1(d,\alpha_0,\Lambda) > 0$ such that for each $(\mu,A)$ satisfying \autoref{assumption:tail} with $\Lambda \ge 1$, every $x_0\in\R^d$, $t \in I$, $r \in (0,1]$, $\lambda \in (1,2]$ and every admissible cut-off function $\tau$
\begin{align}
\label{cutoffest}
\sup_{(t,x) \in I \times \R^d} \Gamma^t(\tau,\tau)(t,x) \le c_1 r^{-\am} \sum_{k = 1}^d (\lambda^{\am/\alpha_k}-1)^{-\alpha_k},
\end{align}
where we define
\begin{align*}
\Gamma^t(\tau,\tau)(t,x) = \int_{\R^d} (\tau(x)-\tau(y))^2 \mu(t,x,\d y) &+ (A(t,x),\nabla \tau(x) ,\nabla \tau(x)).
\end{align*}
\end{lemma}
\begin{proof}
Let $x \in \R^d$ and $t \in I$ be arbitrary. Note that the bound for the second summand of $\Gamma^t(\tau,\tau)$ is an immediate consequence of \eqref{eq:A-upper} and the definition of $\tau$. Next, we turn to estimating the first summand of $\Gamma^t(\tau,\tau)$. For $y \in \R^d$, let $\ell=(\ell_0(x,y),\dots,\ell_{d}(x,y)) \in \R^{d  (d+1)}$ 
be a polygonal chain connecting $x$ and $y$ with \[ \ell_k(x,y) = (l^k_1,\dots,l^k_d), \quad \text{where } \begin{cases}
                                                       l^k_j = y_j, &\text{if } 
j\leq k, \\
                                                       l^k_j = x_j, &\text{if } 
j>k.
                                                      \end{cases} \]
 Then $x=\ell_0(x,y)$, $y=\ell_d(x,y)$, and $|\ell_{k-1}(x,y)-\ell_k(x,y)|=|x_k-y_k|$ for all $k\in\{1,\dots,d\}$. We observe
 \begin{align*}
 \int_{\R^d}(\tau(x)-\tau(y))^2\mu(t,x,\d y)\leq 
d\sum_{k=1}^d\int_{\R^d}(\tau(\ell_{k-1}(x,y))-\tau(\ell_{k}(x,y)))^2\mu(t,x,\d 
y)=:d\sum_{k=1}^d I_k.
 \end{align*}
 For $k\in\{1,\dots,d\}$, we set 
$\eta_k = \left( \lambda^{\am/\alpha_k} - 1\right)^{\alpha_k/\am}$. Then
 \begin{align*}
 I_k&=\int_{M_{\eta_kr}(x)}(\tau(\ell_{k-1}(x,y))-\tau(\ell_{k}(x,y)))^2\mu(t,x,\d 
y)\\
 & \qquad + \int_{\R^d\setminus 
M_{\eta_kr}(x)}(\tau(\ell_{k-1}(x,y))-\tau(\ell_{k}(x,y)))^2\mu(t,x,\d y)=:A_k+B_k.
 \end{align*}
Now \eqref{assmu1} implies that
\begin{align*}
B_k \le \mu(t,x,\R^d \setminus M_{\eta_k r}(x)) \le 2d\Lambda (\eta_k r)^{-\am} = 2d\Lambda r^{-\am} \left( \lambda^{\am/\alpha_k} -1 \right)^{-\alpha_k}.
\end{align*}
For the upper bound of $A_k$, we make the following observation for every $\rho \in (0,2)$ and $x \in \R^d$, starting from \autoref{assumption:tail}:
\begin{align*}
\int_{M_{\rho}(x)} \vert x_k - y_k \vert^2 \mu(t,x,\d y) &\le \int_{E_{\rho}^k(x)} \vert x_k - y_k \vert^2 \mu(t,x,\d y)\\
& \le \sum_{n = 0}^{\infty} \left( (2^{-n}\rho)^{\am/\alpha_k} \right)^2 \mu(t,x, E_{2^{-n}\rho}^k(x) \setminus E_{2^{-n-1}\rho}^k(x)) \\
& \le \Lambda \sum_{n = 0}^{\infty} (2^{-n}\rho)^{2 \am/\alpha_k} (2-\alpha_k) (2^{-n-1}\rho)^{-\am}\\
& = 4\Lambda \rho^{2\am/\alpha_k - \am} \frac{2-\alpha_k}{1 - 2^{\am-2\am/\alpha_k}} \le 8\Lambda \rho^{2\am/\alpha_k - \am}.
\end{align*}
Hence,
\begin{align*}
A_k &{\le} \Vert \partial_k \tau \Vert_{\infty}^2 \int_{M_{\eta_k r}(x)} \vert x_k - y_k \vert^2 \mu(t,x,\d y)\\
& \le 8 \Lambda c^2 \left( \lambda^{\am/\alpha_k} -1 \right)^{-2}r^{-2\am/\alpha_k} (\eta_k r)^{2\am/\alpha_k-\am}\\
& = 8\Lambda c^2 r^{-\am}\left( \lambda^{\am/\alpha_k} - 1\right)^{-\alpha_k}.
\end{align*}
Choosing $c_1 = d^2\left(2d\Lambda + 8\Lambda c^2 \right)$ and taking the supremum in $x$, proves the assertion. 
\end{proof}

\begin{remark}
The preceding result can already be found in \cite[Lemma 2.1]{ChKa20}. However, introducing \autoref{assumption:tail} 
in this work, makes the result robust in the sense, that \eqref{cutoffest} remains true in the limit case $\alpha_k \nearrow 2$.
\end{remark}

\subsection{Weighted Poincar\'e inequality}

We conclude this section by deriving a weighted Poincar\'e inequality from \eqref{eq:poincare-assum}.

\begin{theorem}[Weighted Poincar\'e inequality]
\label{thmweightedPoincare}
Let $\psi : M_{3/2} \rightarrow [0,1]$ be defined by $\psi(x) = \left((3 - 2 \sup_{k = 1, \dots, d} \vert x_k \vert^{\alpha_k/\am}) \wedge 1 \right) \vee 0$. Then there is $c_2 = c_2(d,\alpha_0,\Lambda) > 0$ such that for every $(\mu,A) \in \cK(\alpha_0,\Lambda)$ and every $v \in L^1(M_{3/2},\psi(x)\d x)$ it holds
\begin{align*}
\int_{M_{3/2}} (v(x) - v_{\psi})^2 \psi(x) \d x &\le c_2 \int_{M_{3/2}} \int_{M_{3/2}} (v(x)-v(y))^2 (\psi(x) \wedge \psi(y)) \mu(t,x,\d y) \d x\\
&\quad+  c_2 \int_{M_{3/2}} (A(t,x) \nabla v(x),\nabla v(x)) \psi(x) \d x,
\end{align*}
where $v_{\psi} = \left( \int_{M_{3/2}} \psi(x) \d x \right)^{-1} \int_{M_{3/2}} v(x) \psi(x) \d x$.
\end{theorem}

\begin{proof}
By \eqref{eq:poincare-assum} we know already that there is a constant $c = c(d) > 0$ such that for every $x_0 \in \R^d$ and $r > 0$ and every $v \in L^2(M_r(x_0))$ the following Poincar\'e inequality holds:
\begin{align}
\label{poincarethm}
\int_{M_r(x_0)} (v(x) - [v]_{M_r(x_0)})^2 \d x \le c r^{\am} \cE^t_{M_r(x_0)}(v,v).
\end{align}
Note that on $\R^d$, $d(x,y) := \sup_{k = 1, \dots, d} \vert x_k - y_k \vert^{\alpha_k/\am}$ defines a metric. We can write $\psi(x) = \phi(d(x,0))$ with $\phi(z) = ((3-2z) \wedge 1) \vee 0$ decreasing in $z$. By \cite[Theorem 1]{DyKa13}, we conclude
\begin{align*}
\int_{M_{3/2}} (v(x) - v_{\psi})^2 \psi(x) \d x &\le c \int_{M_{3/2}} \int_{M_{3/2}} (v(x)-v(y))^2 (\psi(x) \wedge \psi(y)) \mu(t,x,\d y) \d x\\
&\quad + c \int_{M_{3/2}} (A(t,x) \nabla v(x),\nabla v(x)) \psi(x) \d x
\end{align*}
for some $c = c(d,\alpha_0,\Lambda) > 0$. Therefore the assertion of the theorem follows.
\end{proof}

\section{An inequality for \texorpdfstring{$\log u$}{logu}}\label{sec:logineq}
In this section, we study the logarithm of supersolutions to the parabolic equation $\partial_t u - Lu = f$. 
The main result of this section is \autoref{thmlogu}, which allows us to verify the condition \eqref{BGAss1} in the Lemma by Bombieri and Giusti (see \autoref{thmBG}).
This result follows from the weighted Poincar\'e inequality (\autoref{thmweightedPoincare}) and a Caccioppoli-type estimate.

\begin{lemma}
For any $(\mu,A) \in \cK(\alpha_0,\Lambda)$, $t \in I$, $x_0 \in \R^d$, $r > 0$, $\lambda \in (1,2]$, $\eps > 0$, and $u \in H(M_{\lambda r}(x_0)) \cap L^{\infty}(\R^d)$ 
\begin{align}
\label{eq:algebra-logu-nl}
c_2 \cE^{\mu(t)}(\tau,\tau) &+ \cE^{\mu(t)}(u,-\tau^2 \U^{-1}) \\
&\ge c_1 \int_{M_{\lambda r}(x_0)} \int_{M_{\lambda r}(x_0)} (\tau^2(x) \wedge \tau^2(y)) \left(\log \frac{\U(y)}{\tau(y)} - \log \frac{\U(x)}{\tau(x)} \right)^2 \mu(t,x,\d y)\d x, \nonumber \\
c_2 \cE^{A(t)}(\tau,\tau) &+ \cE^{A(t)}(u,-\tau^2 \U^{-1}) \label{eq:algebra-logu-l} \\
&\ge c_1 \int_{M_{\lambda r}(x_0)} \tau^2(x) \left(A(t,x) \nabla  \log \frac{\U(x)}{\tau(x)} , \nabla \log \frac{\U(x)}{\tau(x)} \right)\d x, \nonumber 
\end{align}
where $\tau = \tau_{x_0,r,\lambda}$, $\U = u +\eps$, and $c_1, c_2 > 0$ depend only on $d$, $\Lambda$, $\alpha_0$..
\end{lemma}

\begin{proof}
For the proof of \eqref{eq:algebra-logu-nl}, we refer the reader to  \cite[Lemma 3.6]{KaWe22a}, where the same result is given in a more general nonsymmetric setup. The proof of \eqref{eq:algebra-logu-l} is standard.
\end{proof}

Let us now state and prove the main result of this section, compare \cite[Proposition 4.2]{FeKa13}.
\begin{proposition}[Estimate for $\log u$]
\label{thmlogu}
There is $C = C(d,\alpha_0,\Lambda) > 0$ such that for every $\mu \in \cK(\alpha_0,\Lambda)$ and every supersolution $u$ of $\partial_t u - Lu = f$  in $Q = (-1,1) \times M_2$ which satisfies $u \ge \eps > 0$ in $(-1,1) \times \R^d$, there is a constant $a = a(\U) \in \R$ such that the following two inequalities hold true for any $s > 0$:
\begin{align}
\label{logu1}
\vert Q_{\oplus}(1) \cap \lbrace \log \U < -s-a \rbrace \vert \le C \frac{|M_1|}{s}, \qquad \vert Q_{\ominus}(1) \cap \lbrace \log \U > s-a \rbrace \vert \le C \frac{|M_1|}{s},
\end{align}
where $\U := u + \Vert f \Vert_{L^{\infty}(Q)}$.
\end{proposition}

\begin{proof}
Let $\tau = \tau_{x_0,1,3/2}$ be as in \autoref{thmweightedPoincare}. We define the test function $\phi(t,x) = \tau^2(x)\U^{-1}(t,x)$ and the auxiliary function $v(t,x) = -\log \frac{\U(t,x)}{\tau(x)}$. Since $u$ is a supersolution to $\partial_t u - Lu = f$, we can apply \eqref{weaksol2} and obtain that for a.e. $t \in (-1,1)$
\begin{align*}
\int_{M_{3/2}} \tau^2(x) \partial_t v(t,x) \d x + \cE^t(\U,-\tau^2\U^{-1}) \le -\int_{M_{3/2}} \tau^2(x)\U^{-1}(t,x)f(t,x)\d x,
\end{align*}
where we used the observation $\partial_t \U(t,x) = -\U(t,x) \partial_t v(t,x)$.
Using $\Vert f/\U\Vert_{L^{\infty}(Q)} \le 1$, $\tau^2(x) \leq 1$, \eqref{eq:algebra-logu-nl}, \eqref{eq:algebra-logu-l}, and \autoref{thmcutoffest}:
\begin{align*}
\int_{M_{3/2}} &\tau^2(x)\partial_t v(t,x) \d x + \int_{M_{3/2}} \int_{M_{3/2}} (\tau^2(x) \wedge \tau^2(y))(v(t,x)-v(t,y))^2 \mu(t,x,\d y)\d x\\
&\quad + \int_{M_{3/2}} \tau^2(x) (A(t,x)\nabla v(t,x),\nabla v(t,x)) \d x\\
& \le -\int_{M_{3/2}} \tau^2(x)\U^{-1}(t,x)f(t,x)\d x + c_2 \cE^t(\tau,\tau) \le C \vert M_{3/2} \vert.
\end{align*}
Hence, applying the weighted Poincar\'e inequality \autoref{thmweightedPoincare} leads to
\begin{align}\label{eq:logproofhelp1}
\int_{M_{3/2}} \tau^2(x)\partial_t v(t,x) \d x + c_1 \int_{M_{3/2}} (v(t,x)-V(t))^2 \tau^2(x) \d x\le c_2 \vert M_1 \vert
\end{align}
for constants $c_1,c_2 > 0$ depending only on $d,\alpha_0,\Lambda$ and where
\begin{align*}
V(t) := \frac{\int_{M_{3/2}}\tau^2(x) v(t,x)\d x}{\int_{M_{3/2}} \tau^2(x) \d x}.
\end{align*}
Integrating inequality \eqref{eq:logproofhelp1} over $[t_1,t_2] \subset (-1,1)$, yields
\begin{align*}
\left( \int_{M_{3/2}} \tau^2(x)v(t,x)\d x \right)_{t=t_1}^{t_2} + c_1 \int_{t_1}^{t_2} \int_{M_{3/2}} (v(t,x)-V(t))^2 \tau^2(x) \d x \d t \le c_2 (t_2-t_1)\vert M_1 \vert.
\end{align*}
Note that $\tau$ satisfies
\begin{align*}
& \int_{M_{3/2}} \tau^2(x) \d x \le \vert M_{3/2} \vert = \left(\frac{3}{2}\right)^{\am\beta} \vert M_1 \vert \le 4^{d/{\alpha_0}} \vert M_1 \vert,
\end{align*}
Thus, dividing by $\int_{M_{3/2}} \tau^2(x) \d x$, we obtain
\begin{align}
\begin{aligned}
\label{helpV}
V(t_2)-V(t_1) &+ \frac{c_3}{\vert M_1 \vert} \int_{t_1}^{t_2} \int_{M_1} (v(t,x)-V(t))^2 \d x \d t\\
& \le V(t_2)-V(t_1) + \frac{c_1}{ \int_{M_{3/2}} \tau^2(x) \d x} \int_{t_1}^{t_2} \int_{M_1} (v(t,x)-V(t))^2 \d x \d t\\
& \le c_2 (t_2-t_1) \frac{\vert M_1 \vert}{\int_{M_{3/2}} \tau^2(x) \d x} \le c_2 (t_2-t_1),
\end{aligned}
\end{align}
where $c_3 = c_1 / 4^{d/\alpha_0}$. Having at hand the estimate \eqref{helpV}, the proof of \eqref{logu1} follows line by line the arguments in \cite[Proof of Proposition 4.2]{FeKa13} (see also \cite[Theorem 9.2.1]{Wei22}) without any changes.
\end{proof}

\section{Moser iteration}\label{sec:Moser} \allowdisplaybreaks
In this section we establish the Moser iteration scheme for negative exponents, as well as for small positive exponents. 
By carrying out the iteration we can obtain estimates for $\inf u$ and for small positive moments of $u$ from which we can show the second assumption \eqref{BGAss2} of \autoref{thmBG} and which allows us to prove the weak Harnack inequality.
The main ingredients in this section are the Sobolev inequality \eqref{eq:sobolev-assum}, \autoref{thmcutoffest} and 
the following Caccioppoli-type estimate.

\begin{lemma}
For any $(\mu,A) \in \cK(\alpha_0,\Lambda)$, $t \in I$, $x_0 \in \R^d$, $r > 0$, $\lambda \in (1,2]$, $\eps > 0$, and $u \in H(M_{\lambda r}(x_0)) \cap L^{\infty}(\R^d)$ and $q \ge 1 - \kappa^{-1}$, with $q \neq 1$,
\begin{align}
\label{eq:algebra-Moser-nl}
\begin{split}
\cE^{\mu(t)}_{M_{\lambda r}(x_0)}(\tau \U^{\frac{-q+1}{2}},\tau \U^{\frac{-q+1}{2}})
&\le c_1|q-1| \cE^{\mu(t)}(u,-\tau^{2}\U^{-q})\\
&\quad+ c_2 \left(1 \vee |q-1| \right) \Vert \U^{-q+1} \Gamma^{\mu(t)}(\tau,\tau)\Vert_{L^{1}(M_{\lambda r}(x_0))},
\end{split}
\end{align}
\begin{align}
\label{eq:algebra-Moser-l}
\begin{split}
\cE^{A(t)}_{M_{\lambda r}(x_0)}(\tau \U^{\frac{-q+1}{2}},\tau \U^{\frac{-q+1}{2}})
&\le c_1|q-1| \cE^{A(t)}(u,-\tau^{2}\U^{-q})\\
&\quad+ c_2 \left(1 \vee |q-1| \right) \Vert \U^{-q+1} \Gamma^{A(t)}(\tau,\tau)\Vert_{L^{1}(M_{\lambda r}(x_0))},
\end{split}
\end{align}
where $\kappa = 1 + \frac{1}{\beta}$, $\tau = \tau_{x_0,r,\lambda}$, $\U = u +\eps$, and $c_1, c_2 > 0$ depend only on $d$, $\Lambda$, $\alpha_0$.
\end{lemma}

\begin{proof}
For the proof of \eqref{eq:algebra-Moser-nl}, we refer to \cite[Lemma 3.5]{KaWe22a}. The proof of \eqref{eq:algebra-Moser-l} is standard.
\end{proof}

\subsection{Moser iteration for negative exponents} \allowdisplaybreaks
Recall that $I$ always denotes an open, bounded interval in $\R$ and $\Omega$ is a bounded domain in $\R^d$ such that $\Omega \subset M_r(x_0)$ for some $x_0 \in \R^d$ and $r \in (0,3]$.

\begin{theorem}[Moser iteration scheme for negative exponents]
\label{thmit1}
Let $1/2 \le r < R \le 1$ and $p > 0$. There is $c = c(d,\alpha_0,\Lambda) > 0$ such that for every $\mu \in \cK(\alpha_0,\Lambda)$ and every non-negative supersolution $u$ of $\partial_t u - L u = f$ in $Q = I \times \Omega$, with $Q_{\ominus}(R) \Subset Q$ and $u \ge \eps > 0$ in $Q$ it holds
\begin{align}
\label{it1}
\Vert \U^{-1} \Vert^p_{L^{\kappa p}(Q_{\ominus}(r))} \le c (p+1) \left( R - r \right)^{-\am} \Vert \U^{-1} \Vert^p_{L^{p}(Q_{\ominus}(R))},
\end{align}
where $\kappa = 1 + \frac{1}{\beta}$ and $\U = u + \Vert f \Vert_{L^{\infty}(Q)}$. 
\end{theorem}

\begin{proof}
Throughout this proof $c > 0$ will always be considered as a placeholder for some (changing) positive constant that depends only on $d,\alpha_0,\Lambda$ (and on $\beta$, but note that $d/2 \le \beta \le d/\alpha_0$). 
As in the proof of \cite[Proposition 3.4]{FeKa13}, we redefine in the case $\Vert f \Vert_{L^{\infty}(Q)} = 0$ the function $\U = u + \delta$ for some $\delta > 0$. This guarantees that $u \ge \delta > 0$ in $I \times \R^d$ and ensures that the right-hand side of \eqref{helpMoserNeg1} is finite. Taking the limit $\delta \searrow 0$ in the end finishes the proof.\\
For $q > 1$, let $v(t,x) = \U^{\frac{1-q}{2}}(t,x)$ and let $\tau_{x_0 = 0,r = r,\lambda = R/r}  =: \psi : \R^d \rightarrow [0,1]$ be an admissible cut-off function in the sense of \autoref{cutofffct}. We define $\phi(t,x) = \U^{-q}(t,x) \tau^{2}(x)$. 
By \eqref{weaksol2},
\begin{align*}
\int_{M_R} -\tau^{2}(x) \U^{-q}(t,x) \partial_t \U(t,x) \d x + \mathcal{E}^t(\U,-\tau^{2}\U^{-q}) \le \int_{M_R} -\tau^{2}(x) \U^{-q}(t,x)f(t,x)\d x.
\end{align*}
Using \eqref{eq:algebra-Moser-nl}, \eqref{eq:algebra-Moser-l} and the observation $\partial_t (v^2) = (1-q) \U^{-q} \partial_t \U$ leads to
\begin{align}
\label{helpMoserNeg1}
\begin{aligned}
 &\int_{M_R} \tau^{2}(x) \partial_t{(v^2)}(t,x) \d x+ \cE^t_{M_R}(\tau \U^{\frac{-q+1}{2}},\tau \U^{\frac{-q+1}{2}})\\
& \quad\le c\left(1 \vee |q-1| \right) \Vert \U^{-q+1} \Gamma^t(\tau,\tau)\Vert_{L^{1}(M_R)} + c_1(q-1)\int_{M_R}  \tau^{2}(x) \vert \U^{-q}(t,x) \vert \vert f(t,x) \vert \d x.
\end{aligned}
\end{align}
By \autoref{thmcutoffest} and using the fact that $\Vert f/\U \Vert_{L^{\infty}(Q)} \le 1$, 
we obtain
\begin{align}\label{eq:helpMoserNeg4}
\begin{aligned}
\int_{M_R} & \tau^{2}(x) \partial_t(v^2)(t,x)\d x + \mathcal{E}^t_{M_r}(v,v)\\
& \le c( 1 \vee |q-1|)r^{-\am}\left( \sum_{k = 1}^d \left((R/r)^{\am/\alpha_k}-1\right)^{-\alpha_k}\right) \int_{M_R} v^2(t,x)\d x.
\end{aligned}
\end{align}
Let $\chi_{\ominus} : \R \rightarrow [0,1]$, $\chi_{\ominus}(t) = \left( \frac{t+R^{\am}}{R^{\am}-r^{\am}} \wedge 1\right) \vee 0$. Note that the function $\chi_{\ominus}$ has the following properties: 
\[\chi_{\ominus}(-R^{\am}) = 0, \hspace*{0.5em}  \Vert \chi_{\ominus}'\Vert_{\infty} \le  r^{-\am}\left( (R/r)^{\am} -1 \right)^{-1}, \hspace*{0.5em} \chi_{\ominus} \equiv 1 \text{ on } I_{\ominus}(r) \hspace*{0.5em}  \text{ and } \Vert \chi_{\ominus} \Vert_{\infty} \le 1.\] 
Multiplying \eqref{eq:helpMoserNeg4} by $\chi_{\ominus}^2$ and integrating from $-R^{\am}$ to some $t \in I_{\ominus}(r)$ leads to
\begin{align*}
&\int_{M_R} \tau^{2}(x) (\chi_{\ominus}(t)v(t,x))^2 \d x + \int_{-R^{\am}}^t \chi_{\ominus}^2(s) \cE^s_{M_r}(v(s),v(s)) \d s\\
& \le c(1 \vee |q-1|) r^{-\am} \left( \sum_{k = 1}^d \left((R/r)^{\am/\alpha_k}-1\right)^{-\alpha_k}\right)\int_{-R^{\am}}^t \chi_{\ominus}^2(s) \int_{M_R}v^2(s,x)\d x \d s\\
& + \int_{-R^{\am}}^t 2 \chi_{\ominus}(s) \vert \chi_{\ominus}'(s)\vert \int_{M_R} v^2(s,x) \d x \d s.
\end{align*}
Using the properties of $\chi$ implies
\begin{align}
\label{it1help1}
\begin{aligned}
&\sup_{t \in I_{\ominus}(r)} \int_{M_r} v^2(t,x) \d x + \int_{I_{\ominus}(r)} \cE^s_{M_r}(v(s),v(s)) \d s\\
&\le \frac{c(1 \vee |q-1|) }{r^{\am}}\left(  \sum_{k = 1}^d \left((R/r)^{\am/\alpha_k}-1\right)^{-\alpha_k} + \left( (R/r)^{\am} -1 \right)^{-1} \right) \int_{Q_{\ominus}(R)} v^2(s,x) \d x \d s.
\end{aligned}
\end{align}
Since both summands on the left-hand side of \eqref{it1help1} are non-negative, 
we can estimate each of them separately from above by the quantity on the right-hand side of \eqref{it1help1}. \\
We set $\Theta = \frac{\beta}{\beta-1}$ and let $\Theta'$ be the corresponding Hölder exponent, that is $\Theta' = \beta$. 
Furthermore, let $\kappa = 1 + \frac{1}{\Theta'} = 1 + \frac{1}{\beta}$. \\
Applying the Hölder inequality and the Sobolev inequality  \eqref{eq:sobolev-assum} with $\lambda = \frac{r+R}{2r}$, leads to
\begin{align}\label{eq:helpMoserneg}
\begin{aligned}
&\int_{Q_{\ominus}(r)} v^{2\kappa}(t,x) \d x \d t = \int_{I_{\ominus}(r)}\int_{M_r} v^{2}(t,x)v^{2/\beta}(t,x) \d x \d t\\
&\le \int_{I_{\ominus}(r)} \left(\int_{M_r} v^{2\Theta}(t,x) \d x \right)^{1/\Theta} \left( \int_{M_r} v^{2}(t,x)\d x\right)^{1/\beta} \d t \\
& \le c \sup_{t \in I_{\ominus}(r)} \left( \int_{M_r} v^{2}(t,x) \d x\right)^{1/\beta} \Bigg[\int_{I_{\ominus}(\frac{r+R}{2})} \cE^s_{M_{\frac{r+R}{2}}}(v(s),v(s))\\
& \hspace*{6em}+ r^{-\am}\left( \sum_{k = 1}^d \left(\left(\frac{r+R}{2r}\right)^{\am/\alpha_k} -1\right)^{-\alpha_k}\right) \int_{Q_{\ominus}(\frac{r+R}{2})} v^2(s,x) \d x \d s \Bigg].
\end{aligned}
\end{align}
In the next step, we combine this estimate with \eqref{it1help1}. \\
We apply \eqref{it1help1} once with $r = r, R = \frac{r+R}{2}$ (note: $\lambda = \frac{r+R}{2r}$) in order to estimate the first factor in the above inequality 
and once with $r = \frac{r+R}{2}, R = R$ (note: $\lambda = \frac{2R}{r+R}$) in order to estimate the nonlocal part in the second factor in the foregoing inequality. \\
For brevity, we introduce
\begin{align*}
&\Upsilon_1(r,R,\alpha_k) =  \sum_{k = 1}^d \left(\left(\frac{r+R}{2r}\right)^{\am/\alpha_k}-1\right)^{-\alpha_k}, & \Upsilon_2(r,R) = \left( \left( \frac{r+R}{2r} \right)^{\am} -1 \right)^{-1},\\
&\Upsilon_3(r,R,\alpha_k) = \sum_{k = 1}^d \left(\left( \frac{2R}{r+R} \right)^{\am/\alpha_k}-1\right)^{-\alpha_k}, & \Upsilon_4(r,R) = \left( \left( \frac{2R}{r+R} \right)^{\am} -1 \right)^{-1}.
\end{align*}
By \eqref{eq:helpMoserneg} and \eqref{it1help1},
\begin{align*}
\int_{Q_{\ominus}(r)}&  v^{2\kappa}(t,x) \d x \d t  \le  c \Big[ (1 \vee |q-1|) r^{-\am}\left( \Upsilon_1(r,R,\alpha_k) + \Upsilon_2(r,R) \right)\Big]^{1/\beta} \times\\
& \quad \times \Big[ (1 \vee |q-1|) \left(\frac{r+R}{2}\right)^{-\am}\left( \Upsilon_3(r,R,\alpha_k)+ \Upsilon_4(r,R) \right) + r^{-\am}\Upsilon_1(r,R,\alpha_k) \Big] \times \\
& \quad \times \left(\int_{Q_{\ominus}(R)} v^2(s,x) \d x \d s\right)^{\kappa} \\
& \le c \left( (1 \vee |q-1|) (R-r)^{-\am}\right)^{1/\beta} \times \left[ (1 \vee |q-1|) (R-r)^{-\am} \right] \times\\
& \quad \times \left(\int_{Q_{\ominus}(R)} v^2(s,x) \d x \d s\right)^{\kappa},
\end{align*}
where we used $\Upsilon_1(r,R,\alpha_k),\Upsilon_2(r,R) \leq 4d((R-r)/r)^{-\am}$ and $\Upsilon_3(r,R,\alpha_k), \Upsilon_4(r,R) \leq d((R-r)/(R+r))^{-\am}$ in the last step.
We set $q = p+1$. Since, $q \ge 1$, we have $1 \vee |q-1| \le p+1$ and therefore
\begin{align*}
\left(\int_{Q_{\ominus}(r)} v^{2\kappa}(t,x) \d x \d t \right)^{1/\kappa} &\le c (p+1) \left( R - r \right)^{-\am}\left(\int_{Q_{\ominus}(R)} v^2(s,x) \d x \d s\right).
\end{align*}
Using the definition of $v$ finishes the proof.
\end{proof}

The next result can be interpreted as a lower bound for the infimum of a non-negative supersolution $u$ to $\partial_t u - L u = f$ and is obtained by iterating the result from \autoref{thmit1}.

\begin{corollary}
\label{thmmoser1}
Let $1/2 \le r < R \le 1$ and $p \in (0,1]$. There is $c = c(d,\alpha_0,\Lambda) > 0$ such that for every $\mu \in \cK(\alpha_0,\Lambda)$ and every non-negative supersolution $u$ of $\partial_t u - L u = f$ in $Q = I \times \Omega$, with $Q_{\ominus}(R) \Subset Q$ and $u \ge \eps > 0$ in $Q$, it holds
\begin{align}
\label{moser1}
\sup_{Q_{\ominus}(r)} \U^{-1} \le \left( \frac{c}{(R-r)^{d \am /\alpha_0+\am}}\right)^{1/p} \Vert \U^{-1} \Vert_{L^p(Q_{\ominus}(R))},
\end{align}
where $\U = u + \Vert f \Vert_{L^{\infty}(Q)}$.
\end{corollary}

\begin{proof}
By \autoref{thmit1}
\begin{align}\label{itineq}
\Vert \U^{-1} \Vert_{L^{\kappa p}(Q_{\ominus}(r))} \le A^{1/p} \Vert \U^{-1} \Vert_{L^{p}(Q_{\ominus}(R))},
\end{align}
where $\kappa = 1 + \frac{1}{\beta}$ and $A = c (p+1) \left( R - r \right)^{-\am}$.\\
For $j \in \N_0$, let $r_j = r + \frac{R-r}{2^{j}}$ and $p_j = p \kappa^j$. 
Note that $(r_j)_{j\in\N_0}$ is a decreasing sequence with $r_0=R$ and $r_j \searrow r$ as $j\to\infty$. Furthermore, $(p_j)_{j\in\N_0}$ is an increasing sequence with $p_0=p$ and $p_j \nearrow +\infty$ as $j\to\infty$.
Iterating inequality \eqref{itineq}, we obtain for each $m \in \N_0$
\begin{align}
\label{helpmoser1}
\begin{aligned}
\Vert \U^{-1} \Vert^p_{L^{p_{m+1}}(Q_{\ominus}(r))} &\le \Vert \U^{-1} \Vert^p_{L^{p_{m+1}}(Q_{\ominus}(r_{m+1}))} \le A_{m}^{p/{p_{m}}} \Vert \U^{-1} \Vert^p_{L^{p_{m}}(Q_{\ominus}(r_m))} \\
& \leq\dots\leq  \Vert \U^{-1} \Vert^p_{L^{p_{0}}(Q_{\ominus}(r_{0}))} \prod_{j = 0}^{m} A_j^{p/{p_{j}}} = \Vert \U^{-1} \Vert^p_{L^{p}(Q_{\ominus}(R))} \prod_{j = 0}^{m} A_j^{1/{\kappa^j}},
\end{aligned}
\end{align}
where $A_j := c(p \kappa^j + 1) \left( r_j - r_{j+1} \right)^{-\am}$. 
Note that $A_j \le c (2 \kappa^j) \left( \frac{2^{j+1}}{R-r} \right)^{\am}\le \frac{c_1^j}{(R-r)^{\am}}$ for some $c_1 = c_1(d,\alpha_0,\Lambda) > 0$, where we used that $0 < p \le 1$ and $\kappa > 1$. It holds
\begin{align*}
\sum_{j = 0}^{\infty} \frac{j}{\kappa^j} \le \sum_{j = 0}^{\infty} \frac{j}{(1+ \alpha_0/d)^j} \le c_2,
\end{align*}
for some $c_2 = c_2(d,\alpha_0) > 0$. Hence, by using $\beta \le d/\alpha_0$, we obtain 
\begin{align*}
\prod_{j = 0}^{\infty} A_j^{1/\kappa^j} & \le \prod_{j = 0}^{\infty} (R-r)^{-\am\kappa^{-j}}c_1^{j/\kappa^j} = (R-r)^{-\am \sum_{j = 0}^{\infty} \kappa^{-j}} c_1^{\sum_{j = 0}^{\infty} j/\kappa^j} \le \frac{c_3}{(R-r)^{\am(\beta+1)}} \\
& \le \frac{c_3}{(R-r)^{d\am/\alpha_0+\am}},
\end{align*}
for some $c_3 = c_3(d,\alpha_0,\Lambda) \ge 1$. Consequently, $\prod_{j = 0}^{\infty} A_j^{1/\kappa^j} \le \frac{c_3}{(R-r)^{d\am/\alpha_0+\am}}$ and by taking the limit $m \to \infty$ in \eqref{helpmoser1}, we get that
\begin{align*}
\sup_{Q_{\ominus}(r)} \U^{-1} &= \lim_{m \to \infty} \Vert \U^{-1} \Vert_{L^{p_{m+1}}(Q_{\ominus}(r))} \le \Vert \U^{-1} \Vert_{L^{p}(Q_{\ominus}(R))} \left(\prod_{j = 0}^{\infty} A_j^{1/{\kappa^j}} \right)^{1/p}\\
&\le \left(\frac{c_3}{(R-r)^{d\am/\alpha_0+\am}}\right)^{1/p} \Vert \U^{-1} \Vert_{L^p(Q_{\ominus}(R))}.
\end{align*}
\end{proof}

\subsection{Moser iteration for small positive exponents} \allowdisplaybreaks
In this subsection, we prove a Moser Iteration scheme for small positive exponents. 
The main result of this subsection can be proven by a similar technique as in the proof of \autoref{thmit1}. 
For this reason, we only provide a sketch of the proof.
\begin{theorem}[Moser iteration scheme for small positive exponents]
\label{thmit2}
Let $1/2 \le r < R \le 1$ and $p \in (0,\kappa^{-1}]$, where $\kappa = 1 + \frac{1}{\beta}$. There is $c = c(d,\alpha_0,\Lambda) > 0$ such that for every $\mu \in \cK(\alpha_0,\Lambda)$ and every non-negative supersolution $u$ of $\partial_t u - L u = f$ in $Q = I \times \Omega$, with $Q_{\oplus}(R) \Subset Q$ it holds
\begin{align}
\label{it2}
\Vert \U \Vert^p_{L^{\kappa p}(Q_{\oplus}(r))} \le c \left( R - r \right)^{-\am} \Vert \U \Vert^p_{L^{p}(Q_{\oplus}(R))},
\end{align}
where $\U = u + \Vert f \Vert_{L^{\infty}(Q)}$. 
\end{theorem}

\begin{proof}
In the case $\Vert f \Vert_{L^{\infty}(Q)} = 0$, we proceed as described in the proof of \autoref{thmit1}.
Let $q = 1-p \in [1- \kappa^{-1},1)$ and $v(t,x) = \U^{\frac{1-q}{2}}(t,x)$. We define $\phi(t,x) = \U^{-q}(t,x)\tau^2(x)$, where $\tau_{x_0 = 0, r = r, \lambda = R/r} =: \tau : \R^d \rightarrow [0,1]$ is an admissible cut-off function in the sense of \autoref{cutofffct}. 
By \eqref{weaksol2}, we obtain for a.e. $t \in I$
\begin{align}
\label{helpMoserPos1}
-\int_{M_R} \hspace{-0.2cm} \tau^{2}(x) \U^{-q}(t,x) \partial_t \U(t,x) \d x + \mathcal{E}^t(\U(t),-\tau^{2}\U^{-q}(t)) \le -\int_{M_R} \hspace{-0.2cm}\tau^{2}(x) \U^{-q}(t,x)f(t,x)\d x.
\end{align}
Using \eqref{eq:algebra-Moser-nl}, \eqref{eq:algebra-Moser-l}, $\partial_t (v^2) = (1-q)\U^{-q}\partial_t \U$, and \autoref{thmcutoffest}, we obtain by the same arguments as in the proof of \autoref{thmit1}:
\begin{align*}
-\int_{M_R} \hspace{-0.2cm} \tau^2(x) \partial_t (v^2)(t,x) \d x + \cE^t_{M_r}(v(t),v(t)) \le c r^{-\am} \sum_{k=1}^d \left( (R/r)^{\am/\alpha_k} - 1 \right)^{-\alpha_k} \int_{M_R} \hspace{-0.2cm} v^2(t,x) \d x,
\end{align*}
for some constant $c = c(d,\alpha_0,\Lambda) > 0$, where we used that $(1 \vee |q-1|) \le 1$.\\
Let $\chi_{\oplus} : \R \rightarrow [0,1]$ be defined by $\chi_{\oplus}(t) = \left( \frac{R^{\am}-t}{R^{\am} - r^{\am}} \wedge 1 \right) \vee 0$. 
We multiply the above inequality with $\chi_{\oplus}^2$ and integrate in $s$ from some $t \in I_{\oplus}(r)$ to $R^{\am}$. 
Similar to the proof of \autoref{thmit1}, we obtain
\begin{align*}
\sup_{t \in {I_{\oplus}(r)}}& \int_{M_r} v^2(t,x) \d x + c_1 \int_{I_{\oplus}(r)} \cE^s_{M_r}(v(s),v(s)) \d s \\
& \le c \left[r^{-\am} \left( \sum_{k=1}^d \left( (R/r)^{\am/\alpha_k} - 1 \right)^{-\alpha_k}+ ((R/r)^{\am} - 1)^{-1} \right)  \right]\int_{Q_{\oplus}(R)} v^2(s,x) \d x \d s.
\end{align*}
Proceeding as in the proof of \autoref{thmit1}, we obtain 
\begin{align*}
\left( \int_{Q_{\oplus}(r)} \U(t,x) \d x \d t \right)^{1/\kappa} \le c(R-r)^{-\am} \int_{Q_{\oplus}(R)} \U^{p}(t,x) \d x \d t,
\end{align*}
which finishes the proof.
\end{proof}

By an iteration argument, similar to the one we used in order to obtain \autoref{thmmoser1}, we now derive an estimate for small positive moments of non-negative supersolutions $u$ to $\partial_t u - Lu = f$.

\begin{corollary}
\label{thmmoser2}
Let $1/2 \le r < R \le 1$, $\kappa=1+1/\beta$ and $p \in (0, \kappa^{-1})$. There is $c = c(d,\alpha_0,\Lambda) > 0$ such that for every $\mu \in \cK(\alpha_0,\Lambda)$ and every non-negative supersolution $u$ of $\partial_t u - L u = f$ in $Q = I \times \Omega$, with $Q_{\oplus}(R) \Subset Q$, it holds that
\begin{align}
\label{moser2}
\int_{Q_{\oplus}(r)} \hspace{-0.2cm} \U(t,x) \d x \d t \le \left( \frac{c}{\vert Q_{\oplus}(1)\vert (R-r)^{2d\am/\alpha_0 + 2\am + \am^2/\alpha_0 + \am^2/d}}\right)^{1/p - 1} \hspace{-0.2cm} \Vert \U\Vert_{L^p(Q_{\oplus}(R))},
\end{align}
where $\U = u + \Vert f \Vert_{L^{\infty}(Q)}$. 
\end{corollary}

\begin{proof}
This proof is along the lines of \cite[Theorem 3.7]{FeKa13}. 
\autoref{thmit2} implies that
\begin{align}
\label{helpmoser2}
\Vert \U\Vert_{L^{\kappa p}(Q_{\oplus}(r))} \le A^{1/p} \Vert \U\Vert_{L^{p}(Q_{\oplus}(R))},
\end{align}
where $A = c \left( R - r \right)^{-\am}$.
Let $p \in (0,\kappa^{-1})$ be fixed. 
Similar to the proof of \autoref{thmmoser1} we set $p_j = \kappa^{-j}$ and $r_j = r + \frac{R-r}{2^j}$ for $1 \le j \le n$, where $n \in \N$ is such that $p_n \le p < p_{n-1}$. Note that $ R = r_0 \ge r_j > r_{j+1} > \dots \ge r_n > r$ and set $A_j = c 2^{\am j}(r_{j-1}-r_j)^{-\am} \le c4^{j}(R-r)^{-\am}$.\\
Note that
\begin{align*}
 \sum_{j=1}^n \kappa^j &= \frac{\kappa}{\kappa-1}\left(\frac{1}{p_n} - 1\right)
= (\beta + 1) \left(\frac{1}{p_n} - 1\right) \hspace*{0.5em}  \text{and} \hspace*{0.5em}
\sum_{j=1}^n (n-j+1) \kappa^j &\le \frac{\kappa^3}{(\kappa-1)^3} \left(
\frac{1}{p_n} -1\right).
\end{align*} 
By iterating \eqref{helpmoser2} and using the above line in the final step, we obtain the following estimate
\begin{align}
\label{helpmoser22}
\begin{aligned}
\Vert \U\Vert_{L^{1}(Q_{\oplus}(r))} &\le \Vert \U\Vert_{L^{1}(Q_{\oplus}(r_n))} = \Vert \U\Vert_{L^{\kappa p_1}(Q_{\oplus}(r_n))} \le A_n^{1/{p_1}} \Vert \U\Vert_{L^{p_1}(Q_{\oplus}(r_{n-1}))}\\
& = A_n^{1/{p_1}} \Vert \U\Vert_{L^{\kappa p_2}(Q_{\oplus}(r_{n-1}))}  \le \dots \le \prod_{j = 1}^{n} A_{n-j+1}^{1/{p_j}}\Vert \U\Vert_{L^{p_n}(Q_{\oplus}(r_0))} \\
& = \prod_{j = 1}^{n} A_{n-j+1}^{\kappa^j}\Vert \U\Vert_{L^{p_n}(Q_{\oplus}(R))} \le \prod_{j = 1}^{n} \left( c4^{n-j+1}(R-r)^{-\am} \right)^{\kappa^j}\Vert \U\Vert_{L^{p_n}(Q_{\oplus}(R))} \\
& = (c(R-r)^{-\am})^{\sum_{k = 1}^j \kappa^j} 4^{{\sum_{k = 1}^j (n-j+1)\kappa^j}}\Vert \U\Vert_{L^{p_n}(Q_{\oplus}(R))} \\
&\le \left((c(R-r)^{-\am})^{\frac{\kappa}{\kappa-1}} 4^{\frac{\kappa^3}{(\kappa-1)^3}} \right)^{\frac{1}{p_n}-1}\Vert \U\Vert_{L^{p_n}(Q_{\oplus}(R))}.
\end{aligned}
\end{align}
We can further estimate \eqref{helpmoser22} by using
\begin{align*}
\frac{1}{p_n} - 1 &= \kappa^n-1 \leq \kappa^n + \kappa^{n-1} - \kappa -1 =
(1+\kappa)(\kappa^{n-1}-1) 
 \le (1+\kappa) \left( \frac{1}{p}-1\right),
\end{align*}
and the Hölder inequality as follows:
\begin{align*}
\Vert \U\Vert_{L^{1}(Q_{\oplus}(r))} &\le \left((c(R-r)^{-\am})^{\frac{\kappa}{\kappa-1}} 4^{\frac{\kappa^3}{(\kappa-1)^3}} \right)^{\frac{1}{p_n}-1}\Vert \U\Vert_{L^{p_n}(Q_{\oplus}(R))} \\
&\le \vert Q_{\oplus}(R)\vert^{\frac{1}{p_n}-\frac{1}{p}} \left((c(R-r)^{-\am})^{\frac{\kappa}{\kappa-1}} 4^{\frac{\kappa^3}{(\kappa-1)^3}} \right)^{\frac{1}{p_n}-1}\Vert \U\Vert_{L^{p}(Q_{\oplus}(R))}\\
&\le \vert Q_{\oplus}(1)\vert^{\frac{1}{p_n}-1} \left((c(R-r)^{-\am})^{\frac{\kappa}{\kappa-1}} 4^{\frac{\kappa^3}{(\kappa-1)^3}} \right)^{\frac{1}{p_n}-1}\Vert \U\Vert_{L^{p}(Q_{\oplus}(R))}\\
& \le \left(\vert Q_{\oplus}(1)\vert (c(R-r)^{-\am})^{\frac{\kappa}{\kappa-1}} 4^{\frac{\kappa^3}{(\kappa-1)^3}} \right)^{(1+\kappa)\left(\frac{1}{p}-1 \right)}\Vert \U\Vert_{L^{p}(Q_{\oplus}(R))}\\
& \le \left( \frac{c_1 \vert Q_{\oplus}(1)\vert}{(R-r)^{d\am/\alpha_0+\am}}\right)^{(1+\kappa)\left(\frac{1}{p} - 1\right)} \Vert \U\Vert_{L^p(Q_{\oplus}(R))},
\end{align*}
for some $c_1 = c_1(d,\alpha_0,\Lambda) \ge 1$, where we used in the third step that $\vert Q_{\oplus}(R) \vert \le \vert Q_{\oplus}(1) \vert$ and in the last step that $\frac{\kappa}{\kappa -1} = \beta + 1 \le \frac{d}{\alpha_0} + 1$.\\
Since  $0 < R-r < 1$ and $(1 + \kappa)(d\am/\alpha_0 +\am) \le (2 + \am/d)(d\am/\alpha_0 +\am) = 2d\am/\alpha_0 + 2\am + \am^2/\alpha_0 + \am^2/d$,
\begin{align*}
\int_{Q_{\oplus}(r)} \U(t,x) \d x \d t &\le \left( \frac{\left(c_1 \vert Q_{\oplus}(1)\vert\right)^{2+\am/d}}{(R-r)^{2d\am/\alpha_0 + 2\am + \am^2/\alpha_0 + \am^2/d}}\right)^{\frac{1}{p} - 1} \Vert \U\Vert_{L^p(Q_{\oplus}(R))}\\
& = \left( \frac{c_2}{\vert Q_{\oplus}(1)\vert (R-r)^{2d\am/\alpha_0 + 2\am + \am^2/\alpha_0 + \am^2/d}}\right)^{\frac{1}{p} - 1} \Vert \U\Vert_{L^p(Q_{\oplus}(R))}
\end{align*}
for some $c_2 = c_2(d,\alpha_0,\Lambda) > 0$, as desired. 
\end{proof}

\section{Weak Harnack inequality}\label{sectionHI}

The goal of this section is to establish the weak Harnack inequality for non-negative supersolutions $u$ to $\partial_t u - L u = f$. As we already pointed out, this result is a consequence of \autoref{thmBG}. However, the hard work has to be put into verifying the assumptions of \autoref{thmBG} in order to apply it in a beneficial way to our situation. 
This verification has already been done in the previous sections, which is why the subsequent proof is relatively short.

\begin{proof}[Proof of \autoref{thm:weakHarnack}]
Set $\U = u + \Vert f \Vert_{L^{\infty}(Q)}$. In the case $\Vert f \Vert_{L^{\infty}(Q)} = 0$, set $\U = u + \eps$ for some $\eps > 0$ and take the limit $\eps \searrow 0$ at the end of the proof. 
Note that this procedure is necessary in order to apply \autoref{thmmoser1} and \autoref{thmlogu}. 
We define $w = e^{-a} \U^{-1}$ and $\hat{w} = w^{-1} = e^a \U$, where $a = a(\U) \in \R$ is chosen such that \eqref{logu1} is satisfied, i.e. there is $c_1 > 0$ such that
\begin{align}
\label{harnackhelp1}
\vert Q_{\oplus}(1) \cap \lbrace \log w > s \rbrace \vert \le c_1 \frac{\vert M_1 \vert}{s}, \qquad \vert Q_{\ominus}(1) \cap \lbrace \log \hat{w} > s \rbrace \vert \le c_1 \frac{\vert M_1 \vert}{s} ~~ \forall s > 0.
\end{align}
Let $\cU = (U(r))_{1/2 \le r \le 1}$ and ${\hat{\cU}} = (\hat{U}(r))_{1/2 \le r \le 1}$ be two families of domains, defined by $U(r) = (1 - r^{\am},1) \times M_r$ and $\hat{U}(r) = (-1,-1+r^{\am}) \times M_r$. Note, that $U(1) = Q_{\oplus}(1), \hat{U}(1) = Q_{\ominus}(1)$ and $U(1/2) = U_{\oplus}, \hat{U}(1/2) = U_{\ominus}$.\\
We want to apply \autoref{thmBG} once to $(w,\cU)$ with $p_0 = \infty$ and arbitrary $\eta$ and once to $(\hat{w},\hat{\cU})$ with $p_0 = 1$ and $\eta = \frac{d}{d+\alpha_0} \le \frac{1}{\kappa}$. By \eqref{harnackhelp1}, the first condition \eqref{BGAss1} of \autoref{thmBG} is satisfied in both situations, whereas the second condition \eqref{BGAss2} follows from an application of \autoref{thmmoser1} to $(w,\cU)$ and 
an application of \autoref{thmmoser2} to $(\hat{w},\hat{\cU})$, respectively after a shift in time. All in all, \autoref{thmBG} implies 
\begin{align*}
\sup_{U(1/2)} w =e^{-a} \sup_{U(1/2)} \U^{-1} \le C, \qquad \Vert \hat{w} \Vert_{L^1(\hat{U}(1/2))} \le e^a \Vert \U \Vert_{L^1(\hat{U}(1/2))} \le \hat{C},
\end{align*}
for some $C=C(d,\alpha_0,\Lambda), \hat{C}=\hat{C}(d,\alpha_0,\Lambda) > 0$. Hence, by multiplying these two inequalities with each other, we obtain
\begin{align*}
\Vert \U \Vert_{L^1(\hat{U}(1/2))} \le  C\hat{C} \left( \sup_{U(1/2)} \U^{-1} \right)^{-1}  = C\hat{C} \inf_{U(1/2)} \U,
\end{align*}
and therefore
\begin{align*}
\Vert u \Vert_{L^1(U_{\ominus})} \le \Vert \U \Vert_{L^1(U_{\ominus})} \le c \inf_{U_{\oplus}} \U \le c \left( \inf_{U_{\oplus}} u + \Vert f \Vert_{L^{\infty}(Q)} \right),
\end{align*}
which finishes the proof with $c = C\hat{C}$.
\end{proof}

\section{Hölder regularity}\label{sectionHR}

In this section we derive a Hölder regularity estimate for solutions $u$ to $\partial_t u -L u = 0$. The main ingredient is the weak Harnack inequality that we have derived in the previous section. Indeed, the weak Harnack inequality implies a decay of oscillation estimate (see \autoref{thmoscdecay}), which in turn directly implies \autoref{thm:HR}, the Hölder regularity estimate. 

Let us now define the domains we are going to work within this section. We set $D_{\ominus} = (-2,-2 + 1/2^{\am}) \times M_{1/2}$ and $D_{\oplus} = (-1/2^{\am},0) \times M_{1/2}$. Furthermore, for $(t,x) \in \R^{d+1}$ we define a distance function $\hat{\rho}$ by 
\begin{align}\label{def:hatrho}
\hat{\rho}((t,x)) = \max \left(\frac{1}{2}(-t)^{1/\am} , \frac{1}{3}\sup_{k = 1, \dots, d} \vert x_k \vert^{\alpha_k / \am} \right),
\end{align}
if $t \in (-2,0]$ and $\hat{\rho}((t,x)) = \infty$ otherwise.
For $r > 0$, let
\begin{align*}
\hat{D}_r((t_0,x_0)) = (t_0 - 2r^{\am},t_0) \times M_{3r}(x_0),
\end{align*}
i.e. the set of all points $(t,x)$ whose distance to $(t_0,x_0)$ with respect to $\hat{\rho}$ is smaller than $r$. 
We set $\hat{D}_r((0,0)) = \hat{D}_r$. One can easily observe that $\bigcup_{r>0} \hat{D}_r = (-2,0) \times \R^d$.\\
Moreover, let
\begin{align*}
D(r) = (-2r^{\am},0) \times M_{2r}.
\end{align*}

As a first step, we establish the following consequence of \autoref{thm:weakHarnack}. 

\begin{corollary}
\label{thmCorHarnack}
Let $\sigma \in (0,1)$. There exist $\eps_0 = \eps_0(d,\alpha_0,\Lambda,\sigma) \in (0,1)$ and $\delta = \delta(d,\alpha_0,\Lambda,\sigma) \in (0,1)$ such that for every $\mu \in \cK(\alpha_0,\Lambda)$ and every function $w$ satisfying
\begin{itemize}
\item $w \ge 0$ a.e. in $(-2,0) \times \R^d$,
\item $\partial_t w - L w \ge -\eps_0$ in $D(1)$,
\item $\vert D_{\ominus} \cap \lbrace w \ge 1 \rbrace \vert \ge \sigma \vert D_{\ominus} \vert$,
\end{itemize}

it holds that $w \ge \delta$ a.e. in $D_{\oplus}$.
\end{corollary}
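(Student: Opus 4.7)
The plan is to derive this corollary as a direct consequence of the weak Harnack inequality (\autoref{thm:weakHarnack}) after a time shift, with the measure hypothesis on $\{w \ge 1\}$ providing a lower bound for the $L^1$-norm on $D_\ominus$.

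First I would align the geometry with the setup of \autoref{thm:weakHarnack}. The weak Harnack inequality is stated on $Q = (-1,1)\times M_2$ with $U_\oplus = (1-2^{-\am},1)\times M_{1/2}$ and $U_\ominus = (-1,-1+2^{-\am})\times M_{1/2}$. Setting $\tilde w(s,x) := w(s-1,x)$, the time-shift $s = t+1$ maps $D(1)=(-2,0)\times M_2$ onto $(-1,1)\times M_2$, and it identifies $D_\ominus$ with $U_\ominus$ and $D_\oplus$ with $U_\oplus$. Under this shift, $\tilde w$ is a non-negative supersolution of $\partial_s \tilde w - L\tilde w = \tilde f$ with $\|\tilde f\|_{L^\infty} \le \eps_0$.

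Next I would apply \autoref{thm:weakHarnack} to $\tilde w$ to obtain
\begin{align*}
\|w\|_{L^1(D_\ominus)} = \|\tilde w\|_{L^1(U_\ominus)} \le c\bigl(\inf_{U_\oplus}\tilde w + \eps_0\bigr) = c\bigl(\inf_{D_\oplus} w + \eps_0\bigr),
\end{align*}
where $c=c(d,\alpha_0,\Lambda)$ is the constant from \autoref{thm:weakHarnack}. The third hypothesis of the corollary then gives the crucial lower bound
\begin{align*}
\|w\|_{L^1(D_\ominus)} \ge \bigl|D_\ominus \cap \{w \ge 1\}\bigr| \ge \sigma |D_\ominus|.
\end{align*}
Combining the two inequalities yields $\inf_{D_\oplus} w \ge \frac{\sigma |D_\ominus|}{c} - \eps_0$.

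Finally, I would choose $\eps_0 := \min\bigl(1,\tfrac{\sigma|D_\ominus|}{2c}\bigr)$ and $\delta := \min\bigl(1,\tfrac{\sigma|D_\ominus|}{2c}\bigr)$, both depending only on $d,\alpha_0,\Lambda,\sigma$ since $|D_\ominus|$ is a universal constant. This gives $\inf_{D_\oplus} w \ge \delta$, completing the proof. There is no real obstacle here: the only subtlety is to verify that the time-shifted $\tilde w$ genuinely falls within the framework of \autoref{thm:weakHarnack} (in particular that the required non-negativity on the full strip $(-1,1)\times\R^d$ is inherited from the hypothesis $w\ge 0$ on $(-2,0)\times \R^d$, which follows directly from the shift). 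The scaling apparatus of \autoref{thmscaling} is not even needed since we are already on the canonical scale $r=1$.
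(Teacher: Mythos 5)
Your proof is correct and follows essentially the same route as the paper's: a time shift to align $D(1) = (-2,0)\times M_2$ with the domain of the weak Harnack inequality, a lower bound on $\|w\|_{L^1(D_\ominus)}$ via the measure hypothesis on $\{w\ge 1\}$, and then an elementary choice of $\eps_0$ and $\delta$. The only cosmetic difference is that the paper normalizes by $\vert D_\ominus\vert$ inside the chain of inequalities (absorbing the volume into the constant $c$), whereas you keep it explicit; the substance is identical.
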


\begin{proof}
Applying \autoref{thm:weakHarnack} to $w$, leads to
\begin{align*}
\sigma \le \frac{\vert D_{\ominus} \cap \lbrace w \ge 1 \rbrace \vert}{\vert D_{\ominus} \vert} \le \frac{1}{\vert D_{\ominus} \vert}\int_{D_{\ominus}} w(t,x) \d x \d t \le c \left( \inf_{D_{\oplus}} w + \eps_0 \right),
\end{align*}
where $c = c(d,\alpha_0, \Lambda) > 0$.
Note that we had to perform a shift in time in order to be able to apply \autoref{thm:weakHarnack} in the foregoing argument. 
Choosing $\eps_0 < \sigma/c$ and setting $\delta = (\sigma -c \eps_0)/c > 0$ proves the assertion.
\end{proof}

The following result is a consequence of \autoref{thmCorHarnack}.

\begin{lemma}
\label{thmHRhelp}
There exist $\gamma_0 = \gamma_0(d,\alpha_0,\Lambda,\theta) \in (0,1)$ and $\delta = \delta(d,\alpha_0,\Lambda) \in (0,1)$ such that for every $\mu \in \cK(\alpha_0,\Lambda)$ and every function $w$ satisfying
\begin{itemize}
\item $w \ge 0$ a.e. in $\hat{D}(1)$,
\item $\partial_t w - Lw \ge 0$ in $\hat{D}(1)$,
\item $\vert D_{\ominus} \cap \lbrace w \ge 1 \rbrace \vert \ge \frac{1}{2} \vert D_{\ominus} \vert$,
\item $w \ge 2\left( 1 - (6\hat{\rho}((t,y)))^{\gamma_0} \right)$ a.e. in $(-2,0) \times (\R^d \setminus M_3)$,
\end{itemize}
it holds that $w \ge \delta$ a.e. in $D_{\oplus}$.
\end{lemma}

\begin{proof}
Since $w \ge 0$ in $\hat{D}(1)$, $w$ and $w^+$ coincide on $\hat{D}(1)$. Hence, it holds that $\partial_t w^+ - L w^+ \ge -f$ in $D(1)$, where $f(t,x) = (L w^-)(t,x)$. Note that
\begin{align*}
\Vert f \Vert_{L^{\infty}(D(1))} = \sup_{(t,x) \in D(1)} \int_{\R^d \setminus M_3} w^-(t,y) \mu(t,x,\d y),
\end{align*}
by the non-negativity of $w$ in $\hat{D}(1)$. Our goal is to find $\gamma_0 \in (0,1)$ such that $\Vert f \Vert_{L^{\infty}(D(1))} < \eps_0$, where $\eps_0 \in (0,1)$ is the constant from \autoref{thmCorHarnack} applied to $w^+$ with $\sigma = 1/2$. Then the assertion follows from \autoref{thmCorHarnack}.\\
We decompose $\R^d \setminus M_3 = \bigcup_{j = 1}^{\infty} (M_{3^{j+1}} \setminus M_{3^j})$ and note that $(\R^d \setminus M_{3^j}) \subset (\R^d \setminus M_{3^{j-1}}(x))$ for any $x \in M_2$ and $j \in \N$. Upon the observation that by assumption it holds that $w^-(t,y) \le 2(6^{\gamma_0}\cdot 3^{j \gamma_0} -  1)$ for $(t,y) \in (-2,0) \times (\R^d \setminus M_{3^j})$, we have
\begin{equation*}
\begin{split}
\int_{\R^d \setminus M_3} w^-(t,y) \mu(t,x,\d y) &= \sum_{j=1}^{\infty} \int_{M_{3^{j+1}} \setminus M_{3^j}} w^{-}(t,y) \mu(t,x,\d y)\\
&\le 2\sum_{j=1}^{\infty} (6^{\gamma_0}\cdot 3^{j \gamma_0} -  1) \mu(t,x,\R^d \setminus M_{3^{j-1}}(x))\\
&\le c_1 \sum_{j=1}^{\infty} (6^{\gamma_0}\cdot 3^{j \gamma_0} -  1)3^{-j \theta},
\end{split}
\end{equation*}
for every ${t,x} \in D(1)$, where $c_1 = c_1(d,\alpha_0,\Lambda) > 0$ is a constant and we applied \eqref{assmu1} respectively \autoref{assumption:tail}. The quantity $c_1\sum_{j=1}^{\infty} (6^{\gamma_0}\cdot 3^{j \gamma_0} -  1)3^{-j \theta}$ can be bounded from above by $\eps_0$ as follows. First, one finds $N \in \N$ large enough such that $6^{\gamma_0}c_1\sum_{j=N+1}^{\infty} 3^{-(\theta-\theta/2)j} \le \eps_0/2$. Then one chooses $\gamma_0 \in (0, \theta/2)$ small enough such that also $c_1\sum_{j=1}^{N} (6^{\gamma_0}\cdot 3^{j \gamma_0} -  1)3^{-j \theta} < \eps_0/2$, which is possible since $(6^{\gamma_0}\cdot 3^{j \gamma_0} -  1) \to 0$ as $\gamma_0 \to 0$ for every $1 \le j \le N$. This ends the proof.
\end{proof}

\begin{theorem}[Decay of oscillation]
\label{thmoscdecay}
There exists $\gamma = \gamma(d,\alpha_0,\Lambda,\theta) \in (0,1)$ such that for every $\mu \in \cK(\alpha_0,\Lambda)$ and every solution $u$ to $\partial_t u -Lu = 0$ in $\hat{D}(1)$ it holds that for every $\nu \in \Z$:
\begin{align}
\label{oscdecay}
\osc_{\hat{D}(6^{-\nu})} u := \sup_{\hat{D}(6^{-\nu})} u - \inf_{\hat{D}(6^{-\nu})} u \le 2 \Vert u \Vert_{L^{\infty}((-2,0) \times \R^d)} 6^{-\nu \gamma}.
\end{align}
\end{theorem}

\begin{proof}
Let $S_0 = \sup_{(-2,0) \times \R^d} u$, $s_0 = \inf_{(-2,0) \times \R^d} u$ and define $K = S_0-s_0$. Furthermore, set
\begin{align*}
\gamma := \min \left(\gamma_0, \frac{\log(2/(2-\delta))}{\log(6)} \right),
\end{align*}
where $\delta,\gamma_0 \in (0,1)$ are the constants from \autoref{thmHRhelp}. Note that by its definition, $\gamma$ satisfies $1 - \delta/2 < 6^{-\gamma}$. 
Our aim is to construct an increasing sequence $(s_{\nu})_{\nu \in \Z}$ and a decreasing sequence $(S_{\nu})_{\nu \in \Z}$ such that for each $\nu \in \Z$
\begin{align}
\label{Mm}
s_{\nu} \le u \le S_{\nu} \text{ a.e. in } \hat{D}(6^{-\nu}), \qquad S_{\nu} - s_{\nu} = K 6^{-\nu \gamma}.
\end{align}
The existence of such sequences easily implies the desired result \eqref{oscdecay}.  \\
We construct such sequences by induction. First, we define for $n \in \N$ the elements $S_{-n}, s_{-n}$ by $S_{-n} = S_0$ and $s_{-n} = s_0$. Assume there is $k\in\N$ and there are $S_n $,$s_n$ such that \eqref{Mm} is true for $n \le k-1$. We define the auxiliary function
\begin{align*}
v(t,x) = \left[ u\left( \frac{t}{6^{\am(k-1)}}, \left(\frac{x_k}{(6^{k-1})^{\am/\alpha_k}}\right)_{k = 1}^d \right) - \frac{S_{k-1} + s_{k-1}}{2} \right] \frac{2 \cdot 6^{\gamma (k-1)}}{K},
\end{align*}
and observe by scaling (see \autoref{thmscaling}) that $v$ satisfies $\partial_t v - \tilde{L}v = 0$ in $\hat{D}(1)$ and that $\vert v \vert \le 1$ in $\hat{D}(1)$ by the induction hypothesis. Here, $\tilde{L}$ corresponds to an energy form $\tilde{\cE} := \cE^{\tilde{\mu}}$, where $\tilde{\mu} \in \cK(\alpha_0,\Lambda)$ is defined as in \autoref{thmscaling}. Therefore, we can apply the results we have established so far also to $v$.\\
We estimate $v$ on $(-2,0) \times (\R^d \setminus M_3)$ as follows: For $(t,y) \in (-2,0) \times (\R^d \setminus M_3)$, we fix some 
$j \in \N$ such that $6^{j-1} \le \hat{\rho}(t,y) \le 6^j$, i.e. $(t,y) \in \hat{D}(6^j) \setminus \hat{D}(6^{j-1})$. Then
\begin{align*}
\frac{K}{2 \cdot 6^{\gamma(k-1)}} v(t,y) &= u\left( \frac{t}{6^{\am(k-1)}},\left(\frac{y_k}{\left(6^{k-1}\right)^{\am/\alpha_k}}\right)_{k = 1}^d \right) - \frac{S_{k-1}+s_{k-1}}{2}\\
&\le S_{k-j-1} - s_{k-j-1} + s_{k-j-1} - \frac{S_{k-1}+s_{k-1}}{2}\\
&\le S_{k-j-1} - s_{k-j-1} - \frac{S_{k-1}-s_{k-1}}{2}\\
&\le K 6^{-\gamma(k-j-1)} - \frac{K}{2} 6^{-\gamma(k-1)}.
\end{align*}
Since $(t,y) \in \hat{D}(6^j) \setminus \hat{D}(6^{j-1})$, we have $(\tilde{t},\tilde{y}) := \left(\frac{t}{6^{\am(k-1)}},\left(\frac{x_k}{(6^{k-1})^{\am/\alpha_k}}\right)_{k=1}^d\right) \in \hat{D}(6^{j-(k-1)})$.
Consequently, $v(t,y) \le 2\cdot 6^{j\gamma} - 1$ for a.e. $(t,y) \in \hat{D}(6^j) \setminus \hat{D}(6^{j-1})$ and therefore,
\begin{align*}
v(t,y) \le 2 \left( 6 \hat{\rho}(t,y) \right)^{\gamma} - 1
\end{align*}
for a.e. $(t,y) \in (-2,0) \times (\R^d \setminus M_3)$ by the definition of $\hat{\rho}$. Analogously,
\begin{align*}
v(t,y) \ge 1 - 2\left(6 \hat{\rho}(t,y) \right)^{\gamma}
\end{align*}
for a.e. $(t,y) \in (-2,0) \times (\R^d \setminus M_3)$. \\
The existence of $s_k$ and $S_k$ will now follow from \autoref{thmHRhelp} for whose applicability the foregoing two lines are important. 
We distinguish between two cases. 
First, we assume that
\begin{align*}
\vert D_{\ominus} \cap \lbrace v \le 0 \rbrace \vert \ge \frac{1}{2} \vert D_{\ominus} \vert.
\end{align*}
In this case, we can apply \autoref{thmHRhelp} to $w := 1-v$ and therefore $w \ge \delta$ a.e. in $D_{\oplus}$, i.e.
\begin{align}
\label{vest}
v \le 1 - \delta \quad \text{ a.e. in } D_{\oplus}.
\end{align}
Since $\hat{D}(6^{-1}) \subset D_{\oplus}$, we obtain that for a.e. $(t,x) \in \hat{D}(6^{-k})$, using \eqref{vest}, \eqref{Mm} and $1 - \delta/2 \le 6^{-\gamma}$,
\begin{align*}
u(t,x) &=  \frac{K}{2 \cdot 6^{\gamma(k-1)}}\ v\left(6^{\am (k-1)} t ,\left(\left(6^{k-1}\right)^{\am/\alpha_k} x_k \right)_{k = 1}^d \right) + \frac{S_{k-1}+s_{k-1}}{2} \\
&\le \frac{K (1-\delta)}{2 \cdot 6^{\gamma(k-1)}} + s_{k-1} + \frac{S_{k-1}-s_{k-1}}{2} \\
&= \frac{K (1-\delta)}{2 \cdot 6^{\gamma(k-1)}} + s_{k-1} + \frac{K}{2 \cdot 6^{\gamma(k-1)}} = s_{k-1} + \left(1-\frac{\delta}{2} \right) K 6^{-\gamma(k-1)} \\
&\le s_{k-1} + K 6^{-\gamma k}. 
\end{align*}
If we now set $s_k = s_{k-1}$ and $S_k = s_{k-1} + K 6^{-\gamma k}$, then by the induction hypothesis and the previous observation, \eqref{Mm} is satisfied for $k$. 

In the second case, let $\vert D_{\ominus} \cap \lbrace v > 0 \rbrace \vert \ge \frac{1}{2} \vert D_{\ominus} \vert$. Here, we can apply \autoref{thmHRhelp} to $w := 1+v$ and obtain that $v \ge \delta - 1$ a.e. in $D_{\oplus}$. Using a similar argumentation, shows that we can choose $s_k = S_{k-1} - K 6^{-\gamma k}$ and $S_k = S_{k-1}$ such that \eqref{Mm} is satisfied. This finishes the proof.
\end{proof}

Finally, we have all necessary tools to prove our main result concerning Hölder regularity estimates. 
Note that \autoref{thmoscdecay} captures the anisotropy of the jump intensities whereas the following result provides an isotropic Hölder regularity estimate.

Recall that in the formulation of the theorem, $I$ denotes an open, bounded interval in $\R$ and $\Omega$ is a bounded domain in $\R^d$ such that $\Omega \subset M_r(x_0)$ for some $x_0 \in \R^d$ and $r \in (0,3]$.

\begin{proof}[Proof of \autoref{thm:HR}]
Let $\eta_0 = \eta_0(Q,Q') = \sup \left\lbrace r \in (0,1/2] ~\vert~ \forall (t,x) \in Q' : \hat{D}_r((t,x)) \subset Q \right\rbrace$\\
and fix $(t,x),(s,y) \in Q'$ with $t \le s$.
We distinguish between the two cases $\hat{\rho}((t,x)-(s,y)) < \eta_0$ and $\hat{\rho}((t,x)-(s,y)) \geq \eta_0.$

If $\hat{\rho}((t,x)-(s,y)) < \eta_0$, we can find $n \in \N_0$ such that $\frac{\eta_0}{6^{n+1}} \le \hat{\rho}((t,x)-(s,y)) < \frac{\eta_0}{6^n}$.\\
By scaling (see \autoref{thmscaling}), it can be seen that
\begin{align*}
\U(t,x) = u\left( \eta_0^{\am} t +s , \left( \eta_0^{\am/\alpha_k} x_k + y_k \right)_{k = 1}^d \right)
\end{align*}
satisfies $\partial_t \U - \tilde{L} \U = 0$ in $\hat{D}(1)$. Here $\tilde{L}$ is associated to an energy form $\tilde{\cE} := \cE^{\tilde{\mu}}$, where $\tilde{\mu} \in \cK(\alpha_0,\Lambda)$ is given as in \autoref{thmscaling}. Note that $\eta_0 \le 1/2$ is sufficient for $\tilde{\mu} \in \cK(\alpha_0,\Lambda)$.\\
Our aim is to prove
\begin{align}\label{eq:distineq}
\left(\frac{\hat{\rho}((t,x)-(s,y))}{\eta_0} \right)^{\gamma} \leq \left(\frac{\vert x-y\vert + \vert t-s \vert^{1/\am}}{c(\eta_0,\alpha_0)} \right)^{\gamma \alpha_0 / 2},
\end{align}
which then implies the result of the theorem in the first case. Let us first show how the Hölder regularity estimate follows from \eqref{eq:distineq}.
By \autoref{thmoscdecay},
\begin{align*}
\vert u(t,x) - u(s,y)\vert &= \left\vert \U(\eta_0^{-\am}(t-s), \left( \eta_0^{-\am/\alpha_k}(x_k-y_k)\right)_{k = 1}^d) - \U(0,0) \right\vert\\
&\le 2 \Vert u \Vert_{L^{\infty}(I \times \R^d)} (6^{-n-1})^{\gamma}6^{\gamma}\\
&\le 12 \Vert u \Vert_{L^{\infty}(I \times \R^d)} \left(\frac{\hat{\rho}((t,x)-(s,y))}{\eta_0} \right)^{\gamma}\\
& \le 12 \Vert u \Vert_{L^{\infty}(I \times \R^d)} \left(\frac{\vert x-y\vert + \vert t-s \vert^{1/\am}}{c(\eta_0,\alpha_0)} \right)^{\gamma \alpha_0 / 2},
\end{align*}
for some constant $c(\eta_0,\alpha_0) > 0$, where we used \eqref{eq:distineq} in the last inequality. 
Hence, it remains to prove \eqref{eq:distineq}. 
Recall that by definition of $\hat{\rho}$ (see \eqref{def:hatrho}), either $\hat{\rho}((t,x)-(s,y)) = \frac{1}{2}\vert t-s\vert^{1/\am}$ or $\hat{\rho}((t,x)-(s,y)) = \frac{1}{3} \sup_{k = 1, \dots, d} \vert x_k-y_k\vert^{\alpha_k / \am}$. \\
If $\hat{\rho}((t,x)-(s,y)) = \frac{1}{2}\vert t-s\vert^{1/\am}$, we obtain by using $\hat{\rho}((t,x)-(s,y)) < \eta_0 \le \frac{1}{2}$, 
\begin{align*}
\hat{\rho}((t,x)-(s,y))^{\gamma} \le \left(\vert t-s\vert^{1/\am}\right)^{\gamma \alpha_0 /2} \le \left( \vert x-y\vert + \vert t-s \vert^{1/\am} \right)^{\gamma \alpha_0 /2}.
\end{align*}
If $\hat{\rho}((t,x)-(s,y)) = \frac{1}{3} \sup_{k = 1, \dots, d} \vert x_k-y_k\vert^{\alpha_k / \am}$, we get by using $\hat{\rho}((t,x)-(s,y)) < \eta_0 \le \frac{1}{2}$,
\begin{align*}
\hat{\rho}((t,x)-(s,y))^{\gamma} &\le \left( \sup_{k = 1, \dots, d} (1/3)^{\alpha_0/\alpha_k} \vert x_k - y_k \vert^{\alpha_0 / 2} \right)^{\gamma} \le \left( (1/3)^{\alpha_0/2} \vert x - y \vert^{\alpha_0/2}\right)^{\gamma}\\
& \le \left( (1/3)\left( \vert x-y \vert + \vert t-s \vert^{1/\am} \right) \right)^{\gamma \alpha_0/2}.
\end{align*}
This proves \eqref{eq:distineq} and thus the proof of the theorem is finished in the case $\hat{\rho}((t,x)-(s,y)) < \eta_0$.

In the remaining case $\hat{\rho}((t,x)-(s,y)) \ge \eta_0$, the assertion is an immediate consequence of the following computation
\begin{align*}
\vert u(t,x) - u(t,y)\vert \le 2 \Vert u \Vert_{L^{\infty}(I \times \R^d)} \le  2 \Vert u \Vert_{L^{\infty}(I \times \R^d)}  \frac{\hat{\rho}((t,x)-(s,y))^{\gamma\alpha_0/2} }{\eta_0^{\gamma\alpha_0/2}},
\end{align*}
and the fact that
\begin{align*}
\hat{\rho}((t,x)-(s,y))^{\gamma\alpha_0/2} \le \left(\tilde{c}(\alpha_0,\eta_0)\left(\vert x-y \vert + \vert t-s \vert^{1/\am}\right)\right)^{\gamma\alpha_0/2}
\end{align*}
for some constant $\tilde{c}(\alpha_0,\eta_0)$, which can be proven in a similar way as \eqref{eq:distineq}.
\enlargethispage{5ex}
Altogether, we obtain the desired result with $\gamma = \gamma \alpha_0/2$ and $\eta$ depending on $c$ and $\tilde{c}$.
\end{proof}

\section{Examples}\label{sec:examples}

In this section we present several examples of admissible pairs $(\mu,A)$ in the sense of \autoref{def:admissible-pairs} satisfying the given assumptions. The section consists of the following three subsections:
\begin{enumerate}
	\item[7.1] \emph{Nonlocal operators with jumping measures supported on coordinate axes}
	\item[7.2] \emph{Mixed local-nonlocal operators}
	\item[7.3] \emph{Nonlocal operators with jumping measures supported on cusps}	
\end{enumerate}

\subsection{Nonlocal operators with jumping measures supported on coordinate axes}
\label{sec:mu-axes}

The first prototype example covers the case where the underlying stochastic process consists of jump processes in every coordinate direction and there are no diffusive components. Thus we set $d = d_1$ and $d_2 = 0$. Let $\alpha_1,\dots,\alpha_d \in (\alpha_0,2)$ for some $\alpha_0 \in (0,2)$ and define 
\begin{align*}
\ma(x,\d y)=\sum_{k=1}^d \Big( (2-\alpha_k) |x_k-y_k|^{-1-\alpha_k}\,  \d y_k\prod_{i\neq k}\delta_{\{x_i\}}(\d y_i) \Big).
\end{align*}

In the sequel we verify all necessary conditions for \autoref{thm:weakHarnack} and \autoref{thm:HR}.

\begin{remark}
	Once we have shown that $\ma$ satisfies \autoref{assumption:func-ineq}, we can allow for any family of measures $(\mu(x,\d y))_{x \in \R^d}$ satisfying \autoref{assumption:symmetry} and \autoref{assumption:tail}. Then \autoref{assumption:func-ineq} follows if we prove for every $x_0 \in \R^d$, $\rho > 0$
	\begin{align*}
		\Lambda^{-1} \cE^{\mu}_{M_{\rho}(x_0)}(v,v) \le \cE^{\ma}_{M_{\rho}(x_0)}(v,v) \le \Lambda \cE^{\mu}_{M_{\rho}(x_0)}(v,v) ~~ \forall v \in V(M_{\rho}(x_0)|\R^d) \,.
	\end{align*}
	In particular, one could replace $|x_k-y_k|^{-1-\alpha_k}$ by $a_k(x,y) |x_k-y_k|^{-1-\alpha_k}$, where $\Lambda^{-1} \leq a_k \leq \Lambda$ is any symmetric function. 
\end{remark}

Clearly, $\ma$ satisfies \autoref{assumption:symmetry}. The properties \autoref{assumption:tail}, and \eqref{eq:fct-space-comp} hold true with a choice of $\Lambda$ that depends on $\alpha_1, \ldots, \alpha_d$ only through $\alpha_0$. See also \eqref{eq:tail-mu-axes}. The Sobolev inequality \eqref{eq:sobolev-assum} with $\Lambda = \Lambda(\alpha_0, d)$ has been proved already in \cite[Theorem 2.5]{ChKa20}. \\ In order to verify $(\ma,0) \in \cK(\alpha_0,\Lambda)$ uniformly in $\alpha_1, \ldots, \alpha_d$, it remains to prove the Poincar\'e inequality \eqref{eq:poincare-assum} \footnote{We thank B. Dyda for helpful discussions regarding the Poincar\'e inequality with us.}.

\begin{theorem}[Poincar\'e inequality]
\label{thmpoincare}
There is a constant $c_1 = c_1(d) > 0$ such that for every $r > 0$, $x_0\in \R^d$ and every measurable $v:\R^d\to\R$ with $v\in L^2(M_r(x_0))$, 
the following Poincar\'e inequality holds:
 \[ \|v-[v]_{M_r(x_0)}\|_{L^2(M_r(x_0))}^2 \leq c_1r^{\am}\mathcal{E}^{\ma}_{M_r(x_0)}(v,v). \]
\end{theorem}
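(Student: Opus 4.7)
The plan is to reduce the multi-dimensional statement to a one-dimensional fractional Poincar\'e-type estimate whose constant scales correctly with respect to $(2-\alpha_k)$, and to derive this one-dimensional estimate from \autoref{lemma} via a dyadic argument. Throughout, set $R_k := r^{\am/\alpha_k}$ and assume without loss of generality that $x_0 = 0$.

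Starting from Jensen's inequality,
\[
\|v-[v]_{M_r}\|_{L^2(M_r)}^2 \leq \frac{1}{|M_r|}\int_{M_r}\int_{M_r}(v(x)-v(y))^2\, \d x\, \d y,
\]
I would employ the polygonal chain $\ell_0(x,y)=x,\ldots,\ell_d(x,y)=y$ from the proof of \autoref{thmcutoffest}, whose consecutive vertices $\ell_{k-1},\ell_k$ differ only in the $k$-th coordinate. The identity $v(x)-v(y)=\sum_k(v(\ell_{k-1})-v(\ell_k))$ and Cauchy--Schwarz yield $(v(x)-v(y))^2\leq d\sum_k(v(\ell_{k-1})-v(\ell_k))^2$. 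Since the $k$-th summand depends on only $d+1$ of the $2d$ variables of $(x,y)$, integrating out the $d-1$ inactive variables and substituting $w=\ell_{k-1}\in M_r$ with $y_k$ free gives
\[
\frac{1}{|M_r|}\int_{M_r}\int_{M_r}(v(\ell_{k-1})-v(\ell_k))^2\, \d x\, \d y = \frac{1}{2R_k}\int_{M_r}\int_{-R_k}^{R_k}(v(w)-v(w+e_k(y_k-w_k)))^2\, \d y_k\, \d w.
\]

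It then suffices to prove the one-dimensional estimate (with $R=R_k$, $\alpha=\alpha_k$)
\[
\int_{M_r}\int_{-R}^{R}(v(w)-v(w+e_k(y_k-w_k)))^2\, \d y_k\, \d w \leq C R^{1+\alpha}(2-\alpha)\int_{M_r}\int_{-R}^{R}\frac{(v(w)-v(w+e_k(y_k-w_k)))^2}{|w_k-y_k|^{1+\alpha}}\, \d y_k\, \d w,
\]
for a universal constant $C$. To this end, I would dyadically decompose both sides according to $|w_k-y_k|\in[2^{-j}R,2^{-j+1}R)$ into pieces $I_{k,j}$ and $W_{k,j}$; then $W_{k,j}\geq (2R)^{-1-\alpha}2^{j(1+\alpha)}I_{k,j}$. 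For every $j\geq j_0\geq 0$, applying \autoref{lemma} with $N=2^{j-j_0}$ gives $I_{k,j_0}\leq 8^{j-j_0}I_{k,j}$. Summing the lower bound on $W_{k,j}$ over $j\geq j_0$ and combining with this estimate produces the geometric series
\[
\sum_{j\geq j_0}W_{k,j}\geq (2R)^{-1-\alpha}I_{k,j_0}\,8^{j_0}\sum_{j\geq j_0}2^{j(\alpha-2)} = \frac{(2R)^{-1-\alpha}2^{j_0(1+\alpha)}I_{k,j_0}}{1-2^{\alpha-2}}.
\]
Since $1-2^{\alpha-2}\leq (2-\alpha)\ln 2$ by the mean value theorem, one obtains $I_{k,j_0}\leq c(2R)^{1+\alpha}(2-\alpha)2^{-j_0(1+\alpha)}\sum_j W_{k,j}$, and summation over $j_0\geq 0$ (noting $\sum_{j_0}2^{-j_0(1+\alpha)}\leq 2$) completes the one-dimensional estimate.

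Assembling the two steps with the identity $R_k^{1+\alpha_k}/R_k=R_k^{\alpha_k}=r^{\am}$, the $k$-th term contributes $c\,r^{\am}(2-\alpha_k)$ times the $k$-th piece of $\cE^{\ma}_{M_r(0)}(v,v)$, and summation over $k$ produces the desired bound $c_1 r^{\am}\cE^{\ma}_{M_r(0)}(v,v)$ with $c_1$ depending only on $d$. The main technical obstacle, which is precisely what \autoref{lemma} overcomes, is obtaining the $(2-\alpha)$ scaling in the one-dimensional inequality: the naive estimate using only a pointwise lower bound on the weight $|w_k-y_k|^{-1-\alpha}$ loses this factor and yields a constant that blows up as $\alpha\nearrow 2$. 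The polynomial rate $N^3$ in \autoref{lemma} is indispensable here, since any exponential growth in $N$ would destroy the summability of the geometric series that produces the $(2-\alpha)$ factor.
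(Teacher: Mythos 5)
Your argument is correct and takes essentially the same route as the paper's proof: Jensen's inequality and the polygonal-chain decomposition reduce the claim to the one-dimensional estimate \eqref{aimpoinc}, which both proofs derive from \autoref{lemma} by decomposing dyadically in $|w_k-y_k|$ and extracting the factor $2-\alpha_k$ from the summability of a geometric series with ratio $2^{\alpha_k-2}$. Your bookkeeping is slightly more direct, summing the chain $I_{k,j_0}\le 8^{\,j-j_0}I_{k,j}$ and $W_{k,j}\ge (2R)^{-1-\alpha}2^{j(1+\alpha)}I_{k,j}$ over $j\ge j_0$ and invoking $1-2^{\alpha-2}\le(2-\alpha)\ln 2$, whereas the paper encodes the same double sum through the auxiliary weights $\beta_{j,n}=\ln(2)(2-\alpha_k)2^{-j(2-\alpha_k)}$ and the kernel $h_k(s)$.
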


\begin{remark}
A nice property of the aforementioned Poincar\'e inequality is the fact that the constants are independent of $\alpha_1,\dots,\alpha_d$, which has the consequence that for sufficiently nice functions it recovers in the limit, $\alpha_k\nearrow 2$ for all $k\in\{1,\dots,d\}$, the classical Poincar\'e inequality with the classical Dirichlet energy, that is
\[ \|u-[u]_{Q_r(x_0)}\|_{L^2(Q_r(x_0))}^2 \leq cr^2\int_{Q_r(x_0)}|\nabla u(x)|^2\, \d x,\]
where $Q_r(x_0)$ is the cube with center $x_0$ and side-length $2r$. 
Furthermore, it is even possible to consider the limit in some directions. The corresponding energy form can be described as a mixed local and nonlocal energy with local effects in the directions of the limit and nonlocal effects in the other directions and will be considered in \autoref{sec:lnl}.
\end{remark}

Let us start the proof of \autoref{thmpoincare} by proving the following technical lemma.
\begin{lemma}\label{lemma}
 Let $r>0$, $x_0\in\R^d$, $a>0$ and $N\in\N$. 
 For every $u\in L^2(M_r(x_0))$ and $k\in\{1,\dots,d\}$
 \begin{align*}
 & \int_{M_r(x_0)}  \int_{(x_0)_k-r^\frac{\am}{\alpha_k}}^{(x_0)_k+r^\frac{\am}{\alpha_k}} (u(x)-u(x+e_k(y_k-x_k)))^2\mathds{1}_{[a,2a)}(|x_k-y_k|) \,\d y_k \,\d x \\
   & \leq  N^{3}\int_{M_r(x_0)} \int_{(x_0)_k-r^{\frac{\am}{\alpha_k}}}^{(x_0)_k+r^{\frac{\am}{\alpha_k}}} (u(x)-u(x+e_k(y_k-x_k)))^2\mathds{1}_{[N^{-1}a,N^{-1}2a)}(|x_k-y_k|) \,\d y_k \,\d x. 
 \end{align*}
\end{lemma}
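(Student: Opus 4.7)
The plan is to split the ``large jump'' of length $|x_k-y_k|\in[a,2a)$ into $N$ colinear subjumps each of length $(y_k-x_k)/N$, apply Jensen's inequality, and then perform an affine change of variables in each of the resulting $N$ integrals so that the intermediate points become the new base and endpoint of a single small jump in the $e_k$-direction.

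\textbf{Step 1: Telescoping and Jensen.} Fix $k$ and, given $x\in M_r(x_0)$ and $y_k$ in the interval $I_k:=((x_0)_k-r^{\am/\alpha_k},(x_0)_k+r^{\am/\alpha_k})$, set $h=y_k-x_k$. Writing the difference as a telescoping sum and applying the inequality $(\sum_{j=0}^{N-1}a_j)^2\le N\sum_{j=0}^{N-1}a_j^2$, I obtain
\begin{align*}
(u(x)-u(x+he_k))^2\le N\sum_{j=0}^{N-1}\!\bigl(u(x+\tfrac{j}{N}he_k)-u(x+\tfrac{j+1}{N}he_k)\bigr)^2.
\end{align*}
Plugging this into the left-hand side bounds it by $N\sum_{j=0}^{N-1}A_j$, where $A_j$ denotes the integral of the $j$-th term.

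\textbf{Step 2: Affine change of variables.} For each $j\in\{0,\dots,N-1\}$, holding $x_i$ ($i\ne k$) fixed, I substitute
\begin{align*}
z_k=x_k+\tfrac{j}{N}(y_k-x_k),\qquad s'=\tfrac{y_k-x_k}{N},
\end{align*}
so that $x_k=z_k-js'$ and $y_k=z_k+(N-j)s'$. The Jacobian is $|\det|=1/N$, i.e.\ $dx_k\,dy_k=N\,dz_k\,ds'$. Moreover the two intermediate points become $z$ and $z+s'e_k$ respectively, and the condition $|x_k-y_k|\in[a,2a)$ translates into $|s'|\in[N^{-1}a,N^{-1}2a)$. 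Setting $z_i=x_i$ for $i\ne k$, this yields
\begin{align*}
A_j = N\!\int_{\prod_{i\ne k}I_i}\!\!\int_{\Omega_j(z_k,s')}\!(u(z)-u(z+s'e_k))^2\,\mathds{1}_{[N^{-1}a,N^{-1}2a)}(|s'|)\,ds'\,dz_k\,dx',
\end{align*}
where $\Omega_j=\{(z_k,s'):z_k-js'\in I_k,\ z_k+(N-j)s'\in I_k\}$.

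\textbf{Step 3: Convexity of $I_k$.} Since $I_k$ is an interval and $0\le j\le N-1$, both $z_k=\tfrac{N-j}{N}(z_k-js')+\tfrac{j}{N}(z_k+(N-j)s')$ and $z_k+s'=\tfrac{N-j-1}{N}(z_k-js')+\tfrac{j+1}{N}(z_k+(N-j)s')$ are convex combinations of two points in $I_k$, hence themselves in $I_k$. Therefore the constraint set $\Omega_j$ is contained in $\{z_k\in I_k,\ z_k+s'\in I_k\}$, and enlarging the domain of integration (the integrand is non-negative) gives
\begin{align*}
A_j\le N\!\int_{M_r(x_0)}\!\!\int_{\{s':z_k+s'\in I_k\}}\!(u(z)-u(z+s'e_k))^2\,\mathds{1}_{[N^{-1}a,N^{-1}2a)}(|s'|)\,ds'\,dz=:N\cdot B.
\end{align*}
Reverting $s'\leftrightarrow w_k-z_k$ shows that $B$ equals the right-hand side of the claimed inequality divided by $N^3$.

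\textbf{Step 4: Conclusion.} Summing over $j$,
\begin{align*}
\text{LHS}\le N\sum_{j=0}^{N-1}A_j\le N\cdot N\cdot N\cdot B=N^3\cdot B=\text{RHS}.
\end{align*}

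The only subtle point is Step 3: one must verify that the shifted constraints produced by the change of variables do not exceed what the right-hand side's domain allows, which is exactly where the convexity of the one-dimensional interval $I_k$ (and the restriction $0\le j\le N-1$) enters. Everything else is a bookkeeping exercise in Jensen and a linear substitution with Jacobian $1/N$.
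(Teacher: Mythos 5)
Your proof is correct and follows essentially the same strategy as the paper: telescope the increment into $N$ colinear subincrements, apply Jensen's inequality (cost $N$), then for each $j$ perform an affine change of variables whose Jacobian contributes another factor of $N$, and finally enlarge the domain of integration using the convexity of the interval $I_k$ (cost another $N$ from the sum over $j$), giving $N\cdot N\cdot N = N^3$. The one place where your writeup is in fact a bit tidier than the paper's is the change of variables: by passing to $(z_k,s')$ with $s'=(y_k-x_k)/N$, the two--dimensional Jacobian is manifestly $1/N$ and the containment $\Omega_j\subset\{z_k\in I_k,\ z_k+s'\in I_k\}$ is clean; the paper instead parametrizes by the pair of intermediate points $(z^{j-1}_k,z^j_k)$ and writes the factors $\frac{N}{j}\cdot\frac{N}{N-j+1}$ rather than the actual Jacobian factor $N$, relying on the subsequent estimate $\frac{1}{j(N-j+1)}\le\frac{1}{N}$ to recover $N^3$ after summing. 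Both computations land on the same bound, but your version makes the Jacobian bookkeeping transparent and avoids the $j$-dependent factors altogether.
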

\begin{proof}
Without loss of generality, let $x_0=0$. Let
$z^j := x+e_k\frac{j(y_k-x_k)}{N}$ for $j=0,1,\dots,N$. 
As a consequence of convexity of the set $M_r$, $z^j\in M_r$, $j=0,1,\dots,N$ for every $x\in M_r$ and $y_k\in (-r^\frac{\am}{\alpha_k},r^\frac{\am}{\alpha_k})$. Furthermore, by construction $z^0=x$ and $z^N = x+e_k(y_k-x_k)$.
Note that $|z^{j}_k-z^{j-1}_k|=\frac{|y_k-x_k|}{N}$ and $z^j=z^{j-1}+e_k(z^j_k-z^{j-1}_k)$. Combining these properties, leads to
\begin{align*}
 \int_{M_r} & \int_{-r^{\frac{\am}{\alpha_k}}}^{r^{\frac{\am}{\alpha_k}}} (u(x)-u(x+e_k(y_k-x_k)))^2\mathds{1}_{[a,2a)}(|x_k-y_k|) \,\d y_k \,\d x \\
 & \leq N\sum_{j=1}^N  \int_{M_r} \int_{-r^\frac{\am}{\alpha_k}}^{r^\frac{\am}{\alpha_k}} (u(z^{j-1})-u(z^j))^2\mathds{1}_{[a,2a)}(|x_k-y_k|) \,\d y_k \,\d x \\
 & = N\sum_{j=1}^N  \int_{M_r} \int_{-r^\frac{\am}{\alpha_k}}^{r^\frac{\am}{\alpha_k}} (u(z^{j-1})-u(z^{j}))^2\mathds{1}_{[a/N,2a/N)}(|z^{j-1}_k-z^j_k|) \,\d y_k \,\d x \\
 & \leq N\sum_{j=1}^N  \int_{M_r} \int_{-r^{\frac{\am}{\alpha_k}}}^{r^\frac{\am}{\alpha_k}} (u(z^{j-1})-u(z^{j}))^2\mathds{1}_{[a/N,2a/N)}(|z^{j-1}_k-z^j_k|) \,\frac{N}{j}\d z^j_k \,\frac{N}{N-j+1}\d z^{j-1} \\
 &\leq N^3 \int_{M_r} \int_{-r^{\frac{\am}{\alpha_k}}}^{r^\frac{\am}{\alpha_k}} (u(x)-u(x+e_k(y_k-x_k)))^2\mathds{1}_{[a/N,2a/N)}(|x_k-y_k|) \,\d y_k \,\d x,
\end{align*}
where we renamed $z^{j-1}$ and $z^{j}_k$ by $x$ respectively $y_k$ and used the estimate $\frac{1}{j(N-j+1)} \leq \frac{1}{N}$ in order to get rid of the sum over $j$ in the last inequality.
\end{proof}

We are now in the position to prove the Poincar\'e inequality.

\begin{proof}[Proof of \autoref{thmpoincare}]
 For simplicity of notation and without loss of generality, we assume $x_0=0$. 
 The proof for general $x_0\in\R^d$ works analogously.
 By Jensen's inequality
 \begin{align}\label{jensen}
  \|v-[v]_{M_r}\|_{L^2(M_r)}^2 \leq \frac{1}{|M_r|}\int_{M_r} \int_{M_r} (v(x)-v(y))^2\, \,\d y \, \,\d x := J.
\end{align}  

We define the polygonal chain $\ell=(\ell_0(x,y),\dots,\ell_d(x,y))$ joining $x$ and $y$ as in the proof of \autoref{thmcutoffest}.
This leads to,                                                     
\begin{align}\label{decomb}
J \leq \frac{d}{|M_r|}\sum_{k=1}^d \int_{M_r} \int_{M_r} (v(\ell_{k-1}(x,y))-v(\ell_k(x,y)))^2\, \d y \, \d x := \frac{d}{|M_r|}\sum_{k=1}^d I_k.
\end{align} 
Let us fix $k\in\{1,\dots,d\}$ and define $w:=\ell_{k-1}(x,y) = (y_1,\dots,y_{k-1},x_k,\dots,x_d)$. Moreover let $z:= x+y-w=(x_1,\dots,x_{k-1},y_k,\dots,y_d)$. Then
$\ell_k(x,y) = w+e_k(z_k-w_k) = (y_1,\dots,y_{k},x_{k+1},\dots,x_d)$. By Fubini's Theorem
\begin{align}\label{I-k}
I_k = 2^{d-1}r^{\sum_{j\neq k} \frac{\am}{\alpha_j}} \int_{M_r} \int_{-r^{\frac{\am}{\alpha_k}}}^{r^{\frac{\am}{\alpha_k}}} (v(w)-v(w+e_k(z_k-w_k)))^2 \,\d z_k \,\d w.
\end{align} 
In the following, we aim to show that there is a constant $c_1>0$ depending on the dimension $d$, such that
\begin{align}\label{aimpoinc}
\begin{aligned}
&\int_{M_r}  \int_{-r^{\frac{\am}{\alpha_k}}}^{r^{\frac{\am}{\alpha_k}}} (v(w)-v(w+e_k(z_k-w_k)))^2 \,\d z_k \,\d w \\
& \leq c_1 r^{\frac{\am}{\alpha_k} + \am} \int_{M_r} \int_{-r^{\frac{\am}{\alpha_k}}}^{r^{\frac{\am}{\alpha_k}}} (v(w)-v(w+e_k(z_k-w_k)))^2 
\frac{2-\alpha_k}{|z_k-w_k|^{1+\alpha_k}}\d z_k \,\d w. 
\end{aligned}
\end{align}
Once we have this inequality, the proof would be finished. Let us first show how \eqref{aimpoinc} implies the assertion of the theorem.
Combining \eqref{jensen}, \eqref{decomb}, \eqref{I-k} and \eqref{aimpoinc} leads to the existence of constants $c_2,c_3>0$, depending on the dimension $d$ only, such that
\begin{align*}
  & \|v-[v]_{M_r}\|_{L^2(M_r)}^2 \\
  & \leq \frac{d}{|M_r|} \sum_{k=1}^d 2^{d-1}r^{\sum_{j\neq k} \frac{\am}{\alpha_j}} \int_{M_r} \int_{-r^{\frac{\am}{\alpha_k}}}^{r^{\frac{\am}{\alpha_k}}} (v(w)-v(w+e_k(z_k-w_k)))^2 \,\d z_k \,\d w \\
   & \leq \frac{c_2r^{\sum_{j=1}^{d}  \frac{\am}{\alpha_j}}}{|M_r|}r^{\am} \sum_{k=1}^d \int_{M_r} \int_{-r^{\frac{\am}{\alpha_k}}}^{r^{\frac{\am}{\alpha_k}}} (v(w)-v(w+e_k(z_k-w_k)))^2 \,\frac{2-\alpha_k}{|z_k-w_k|^{1+\alpha_k}} \d z_k \,\d w\\
   & = c_3r^{\am} \mathcal{E}_{M_r}^{\ma}(v,v).
\end{align*}
Hence, it remains to prove \eqref{aimpoinc}.
Let $\beta_{j,n}\geq 0$ for all $j,n\in \N\cup\{0\}$ such that $\sum_{j=0}^{\infty} \beta_{j,n}\geq 1$ for all $n\in\N\cup\{0\}$. The sequence $\beta_{j,n}$
will be chosen later. By \autoref{lemma} for $N=N(j)=2^{j+1}$ and $a=2^{-n}r^{\frac{\am}{\alpha_k}}$, 
\begin{align*}
& \int_{M_r} \int_{-r^{\frac{\am}{\alpha_k}}}^{r^{\frac{\am}{\alpha_k}}} (v(w)-v(w+e_k(z_k-w_k)))^2 \,\d z_k \,\d w \\
& = \sum_{n=0}^{\infty} \int_{M_r} \int_{-r^{\frac{\am}{\alpha_k}}}^{r^{\frac{\am}{\alpha_k}}} (v(w)-v(w+e_k(z_k-w_k)))^2\mathds{1}_{[2^{-n}r^{\frac{\am}{\alpha_k}},2^{-n+1}r^{\frac{\am}{\alpha_k}})}(|w_k-z_k|) \,\d z_k \,\d w \\
& \leq \sum_{j,n=0}^{\infty}\beta_{j,n}2^{3(j+1)} \int_{M_r} \int_{-r^{\frac{\am}{\alpha_k}}}^{r^{\frac{\am}{\alpha_k}}} (v(w)-v(w+e_k(z_k-w_k)))^2\\
& \hspace*{6cm}\mathds{1}_{[2^{-n-j-1}r^{\frac{\am}{\alpha_k}},2^{-n-j}r^{\frac{\am}{\alpha_k}})}(|w_k-z_k|) \,\d z_k \,\d w \\
& = \int_{M_r} \int_{-r^{\frac{\am}{\alpha_k}}}^{r^{\frac{\am}{\alpha_k}}} (v(w)-v(w+e_k(z_k-w_k)))^2 h_k(|w_k-z_k|)\,\d z_k \,\d w,
\end{align*}
with 
\[ h_k(s)=\sum_{n=0}^{\infty}\left(\sum_{j=0}^{n}\beta_{j,n-j}2^{3(j+1)}\right)\mathds{1}_{[2^{-n-1}r^{\frac{\am}{\alpha_k}},2^{-n}r^{\frac{\am}{\alpha_k}})}(s). \]
Let
\[ \beta_{j,n}:= \ln(2)(2-\alpha_k)2^{-j(2-\alpha_k)}. \]
Then
\[ 1\leq \sum_{j=0}^{\infty}\beta_{j,n}= \frac{\ln(2)(2-\alpha_k)}{1-2^{-2+\alpha_k}}< 2 \quad \text{ for all } k \in \lbrace 1 \dots d\rbrace.\]
Hence,
\begin{align*}
h_k(s)&= \ln(2)(2-\alpha_k)\sum_{n=0}^{\infty}\sum_{j=0}^{n}2^{3(j+1)}2^{-j(2-\alpha_k)}\mathds{1}_{[2^{-n-1}r^{\frac{\am}{\alpha_k}},2^{-n}r^{\frac{\am}{\alpha_k}})}(s) \\
& = 8\ln(2)(2-\alpha_k)\sum_{n=0}^{\infty}\frac{2^{(n+1)(1+\alpha_k)}-1}{2^{(1+\alpha_k)}-1}\mathds{1}_{[2^{-n-1}r^{\frac{\am}{\alpha_k}},2^{-n}r^{\frac{\am}{\alpha_k}})}(s) \\
& \leq 64\ln(2)(2-\alpha_k)\sum_{n=0}^{\infty}2^{n(1+\alpha_k)}\mathds{1}_{[2^{-n-1}r^{\frac{\am}{\alpha_k}},2^{-n}r^{\frac{\am}{\alpha_k}})}(s) \\
& \leq 64\ln(2)(2-\alpha_k) \left(\frac{r^\frac{\am}{\alpha_k}}{s}\right)^{1+\alpha_k} = 64\ln(2)(2-\alpha_k) \frac{r^{\am/\alpha_k + \am}}{s^{1+\alpha_k}},
\end{align*}
which proves \eqref{aimpoinc} and finishes the proof.
\end{proof} 

We close this section by giving two simple examples that are directly derived from $\ma$.

\begin{example}
Consider for given $\alpha,\beta \in [\alpha_0,2)$ the following family of measures $(\mu_{\alpha,\beta}(x,\cdot))_{x \in \R^3}$ on $\R^3$ given by
\begin{align*}
\mu_{\alpha,\beta}(x,\d y) &= (2-\alpha)\left(\sqrt{\vert x_1 - y_1 \vert^2 + \vert x_2 - y_2 \vert^2}\right)^{-2-\alpha} \d y_1 \d y_2 \delta_{\lbrace x_3 \rbrace}(\d y_3)\\
& + (2-\beta) \vert x_3 - y_3 \vert^{-1-\beta} \delta_{\lbrace x_1 \rbrace}(\d y_1) \delta_{\lbrace x_2 \rbrace}(\d y_2).
\end{align*}
The associated energy form $\cE^{\mu_{\alpha,\beta}}$ corresponds to the process $(X_t,Y_t)$ with state space $\R^3$, where $X_t$ and $Y_t$ are independent and $X_t$ is a $2$-dimensional $\alpha$-stable process and $Y_t$ is a $1$-dimensional $\beta$-stable process.\\
It can be seen easily that $\cE^{\mu_{\alpha,\beta}}$ is an admissible energy form since $\mu_{\alpha,\beta}$ satisfies \autoref{assumption:symmetry}, \autoref{assumption:tail} and \autoref{assumption:func-ineq}, where $\ma$ is defined with respect to $\alpha_1 = \alpha_2 = \alpha$, $\alpha_3 = \beta$.
\end{example}

\begin{proposition}
Let $\alpha,\beta \in [\alpha_0,2)$. Then there exists $\Lambda = \Lambda(\alpha_0)$ such that: 
\begin{itemize}
\item[(i)] For every $x_0 \in \R^3$, $\rho > 0$ it holds
\begin{align*}
&\mu_{\alpha,\beta}(x_0,\R^3 \setminus E_{\rho}^k(x_0)) \le \Lambda (2-\alpha)\rho^{-\max\{\alpha,\beta\}} \quad \text{for every } k \in \lbrace 1,2 \rbrace,\\
& \mu_{\alpha,\beta}(x_0,\R^3 \setminus E_{\rho}^3(x_0)) \le \Lambda (2-\beta)\rho^{-\max\{\alpha,\beta\}}.
\end{align*}
\item[(ii)] For every $x_0 \in \R^3$, $\rho > 0$ and $v \in H^{\ma}(M_{\rho}(x_0))$ it holds
\begin{align*}
\Lambda^{-1} \cE^{\mu_{\alpha,\beta}}_{M_{\rho}(x_0)}(v,v) \le \cE^{\ma}_{M_{\rho}(x_0)}(v,v) \le \Lambda \cE^{\mu_{\alpha,\beta}}_{M_{\rho}(x_0)}(v,v).
\end{align*}
\end{itemize}
In particular, $(\mu_{\alpha,\beta},0) \in \cK(\alpha_0,\Lambda)$.
\end{proposition}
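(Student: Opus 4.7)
For part (i), the plan is a direct computation after splitting the measure. Write $\mu_{\alpha,\beta} = \mu^{\mathrm{pl}} + \mu^{\mathrm{ln}}$, where $\mu^{\mathrm{pl}}$ denotes the first summand (supported in the plane $\{y_3 = x_{0,3}\}$) and $\mu^{\mathrm{ln}}$ the second (supported on the line $\{y_1 = x_{0,1},\, y_2 = x_{0,2}\}$). For $k=3$: since $\mu^{\mathrm{pl}}$ is a.s. concentrated on $y_3 = x_{0,3}$, it contributes nothing outside $E^3_\rho(x_0)$, and the remaining $\mu^{\mathrm{ln}}$-integral is the classical one-dimensional $\beta$-stable tail:
\begin{equation*}
\mu_{\alpha,\beta}(x_0, \R^3 \setminus E^3_\rho(x_0)) = (2-\beta) \cdot 2 \int_{\rho^{\am/\beta}}^\infty r^{-1-\beta} \d r = \tfrac{2}{\beta}(2-\beta)\rho^{-\am}.
\end{equation*}
For $k \in \{1,2\}$, symmetrically $\mu^{\mathrm{ln}}$ contributes nothing since $y_k = x_{0,k}$ on its support. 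For $\mu^{\mathrm{pl}}$, after the substitution $(u,v) = (y_1-x_{0,1}, y_2-x_{0,2})$ and an inner change of variable $v = |u|w$, the inner integral reduces to $|u|^{-1-\alpha} \int_\R (1+w^2)^{-(2+\alpha)/2}\d w$, and the constant $C_\alpha = \int_\R (1+w^2)^{-(2+\alpha)/2}\d w$ is bounded above and below uniformly for $\alpha \in [\alpha_0,2)$. Hence $\mu^{\mathrm{pl}}(x_0, \R^3 \setminus E^1_\rho(x_0)) = \tfrac{2 C_\alpha}{\alpha}(2-\alpha)\rho^{-\am} \le \Lambda (2-\alpha)\rho^{-\am}$ for a suitable $\Lambda = \Lambda(\alpha_0)$.

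For part (ii), the plan is to cancel the common one-dimensional piece along the $x_3$-axis and reduce the remaining comparability to a known two-dimensional result. Writing $\ma = \ma^{(1)} + \ma^{(2)} + \ma^{(3)}$ for the three line-pieces of the reference measure, a direct inspection shows $\ma^{(3)} = \mu^{\mathrm{ln}}$. Consequently
\begin{equation*}
\cE^{\mu_{\alpha,\beta}}_{M_\rho(x_0)}(v,v) - \cE^{\ma^{(3)}}_{M_\rho(x_0)}(v,v) = \cE^{\mu^{\mathrm{pl}}}_{M_\rho(x_0)}(v,v),
\end{equation*}
so the problem reduces to comparing $\cE^{\mu^{\mathrm{pl}}}_{M_\rho(x_0)}(v,v)$ with $\cE^{\ma^{(1)}+\ma^{(2)}}_{M_\rho(x_0)}(v,v)$ up to constants depending only on $\alpha_0$.

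Both of these remaining energy forms factor nicely via Fubini in the $x_3$-variable: because the supports of $\mu^{\mathrm{pl}}(x,\cdot)$, $\ma^{(1)}(x,\cdot)$, $\ma^{(2)}(x,\cdot)$ all lie in $\{y_3 = x_3\}$, integrating first over $x_3 \in (x_{0,3} - \rho^{\am/\beta}, x_{0,3} + \rho^{\am/\beta})$ expresses each energy as
\begin{equation*}
\int_{x_{0,3}-\rho^{\am/\beta}}^{x_{0,3}+\rho^{\am/\beta}} \cE^{\mathrm{2D}}_{S}\big(v(\cdot,\cdot,x_3),v(\cdot,\cdot,x_3)\big)\,\d x_3,
\end{equation*}
where $S = (x_{0,1}-\rho^{\am/\alpha},x_{0,1}+\rho^{\am/\alpha}) \times (x_{0,2}-\rho^{\am/\alpha},x_{0,2}+\rho^{\am/\alpha})$ and the 2D inner form is either the full isotropic $\alpha$-stable form or the planar axes form. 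Consequently it suffices to establish slice-wise the pure two-dimensional comparability
\begin{equation*}
\Lambda^{-1}\cE^{\mathrm{pl,2D}}_{S}(w,w) \le \cE^{\mathrm{ax,2D}}_{S}(w,w) \le \Lambda\, \cE^{\mathrm{pl,2D}}_{S}(w,w)
\end{equation*}
for every $w \in H^{\ma^{(1)}+\ma^{(2)}}(S)$, uniformly in $\alpha \in [\alpha_0,2)$ and the size of $S$.

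This last estimate is precisely the two-dimensional isotropic-vs-axes comparability that is established for $\alpha$-stable measures in \cite{DyKa15}, and it applies on rectangular domains after an affine rescaling reducing $S$ to a cube. The hard part of the proof is therefore not the reduction itself (which is routine bookkeeping) but invoking that 2D comparability in the form that yields a constant depending only on the ellipticity parameter $\alpha_0$ and not on $\alpha$; the $(2-\alpha)$ prefactors are the same on both sides, so robustness as $\alpha \nearrow 2$ is preserved. Combining the slice-wise estimate with the Fubini decomposition and the cancellation of $\ma^{(3)}$ yields the full assertion (ii) with $\Lambda = \Lambda(\alpha_0)$.
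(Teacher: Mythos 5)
Your proof of part (i) is essentially the paper's: a direct evaluation of the tail integrals. The paper bounds the planar piece by replacing $|h|^{-2-\alpha}$ with $4^2(h_1+h_2)^{-2-\alpha}$ and integrating, while you change variables $v=|u|w$ and absorb the $w$-integral into a constant $C_\alpha=\int_\R(1+w^2)^{-(2+\alpha)/2}\,\d w$; both give the exact exponent $\rho^{-\am}$ with a uniformly bounded prefactor and the correct $(2-\alpha)$ or $(2-\beta)$ factor, so this is just a cosmetic difference.

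For part (ii) the paper's proof is a one-line citation of \cite[Lemma 3.3]{KaSc14} "applied to balls $M_\rho(x_0)$" and offers no intermediate steps. Your argument is a genuinely different, more explicit route: you observe that the one-dimensional $\beta$-piece $\ma^{(3)}$ coincides exactly with the line part $\mu^{\mathrm{ln}}$ of $\mu_{\alpha,\beta}$ (since $\alpha_3=\beta$), cancel it from both sides, and then use that all the remaining kernels $\mu^{\mathrm{pl}},\ma^{(1)},\ma^{(2)}$ are concentrated on $\{y_3=x_3\}$ to Fubini the energies slice-by-slice over $x_3$. This correctly reduces the comparability to the two-dimensional isotropic-versus-axes comparability on the square $S$, for which the local (and $\alpha$-robust) comparability result in \cite{DyKa15} or \cite{KaSc14} applies; the $(2-\alpha)$ normalizations agree on both sides, so the slice-wise constant is uniform in $\alpha\in[\alpha_0,2)$ and independent of the square's size by scaling. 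Measurability of the slices is not an issue: by Tonelli, the finiteness of $\cE^{\ma^{(1)}+\ma^{(2)}}_{M_\rho(x_0)}(v,v)$ already gives that a.e.\ slice lies in the corresponding 2D space. One tiny inaccuracy: you speak of an "affine rescaling reducing $S$ to a cube," but since $\alpha_1=\alpha_2=\alpha$ the set $S$ is already a square, so only a dilation is needed. Overall your part (ii) supplies exactly the bookkeeping the paper leaves implicit; both approaches ultimately rest on the same 2D robust comparability input, and yours makes the reduction transparent.
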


\begin{proof}
(i) follows from the following easy calculations:
\begin{align*}
\mu_{\alpha,\beta}(x_0,\R^3 \setminus E_{\rho}^1(x_0)) &= 4(2-\alpha)\int_{\rho^{\max\{\alpha,\beta\}/\alpha}}^{\infty} \int_{0}^{\infty} \vert h \vert^{-2-\alpha} \d h_2 \d h_1 \\
& \le 4^3(2-\alpha) \int_{\rho^{\max\{\alpha,\beta\}/\alpha}}^{\infty} \int_{0}^{\infty} (h_1 + h_2)^{-2-\alpha} \d h_2 \d h_1 \\
&= \frac{4^3(2-\alpha)}{1+\alpha} \int_{\rho^{\max\{\alpha,\beta\}/\alpha}}^{\infty} h_1^{-1-\alpha} \d h_1 \\
&= \frac{4^3(2-\alpha)}{(1+\alpha)\alpha} \rho^{-\max\{\alpha,\beta\}}.
\end{align*}
Analogously we establish
\begin{align*}
\mu_{\alpha,\beta}(x_0,\R^3 \setminus E_{\rho}^2(x_0)) \le \frac{4^3(2-\alpha)}{(1+\alpha)\alpha} \rho^{-\max\{\alpha,\beta\}}.
\end{align*} 
Finally, we compute
\begin{align*}
\mu_{\alpha,\beta}(x_0,\R^3 \setminus E_{\rho}^3(x_0)) = 2(2-\beta) \int_{\rho^{\max\{\alpha,\beta\}/\beta}}^{\infty} \vert h \vert^{-1-\beta} \d h = \frac{2(2-\beta)}{\beta} \rho^{-\max\{\alpha,\beta\}}.
\end{align*}

(ii) follows from \cite[Lemma 3.3]{DyKa15} applied to balls of the form $M_{\rho}(x_0)$.\\
In particular, $(\mu_{\alpha,\beta},0) \in \cK(\alpha_0,\Lambda)$, since by the first estimate in (ii) the Sobolev- and Poincar\'e inequality \eqref{eq:sobolev-assum}, \eqref{eq:poincare-assum} are inherited from $(\ma,0)$.
\end{proof}

\begin{example}
\label{diffjumpintensityatinfty}

Let $\alpha_0 \in (0,2)$ and $\alpha_1,\dots,\alpha_d,\beta_1,\dots,\beta_d \in [\alpha_0,2)$ with $\beta_k \le \alpha_k$ for every $k \in \lbrace 1,\dots,d\rbrace$. We define $\mu_{(\alpha_k),(\beta_k)}(x,\cdot)$ by
\begin{align*}
\mu_{(\alpha_k),(\beta_k)}(x,\d y) = \sum_{k=1}^d \Big( (2-\alpha_k) |x_k-y_k|^{-1-\alpha_k} + (2-\beta_k) |x_k-y_k|^{-1-\beta_k}\,  \d y_k\prod_{i\neq k}\delta_{\{x_i\}}(\d y_i) \Big).
\end{align*}
Note that $\mu_{(\alpha_k),(\beta_k)} \in \cK(\alpha_0,\Lambda)$, where $\ma$ is defined with respect to $( \alpha_k)$. This is a direct consequence of the fact that
\begin{align*}
(2-\alpha_k) |x_k-y_k|^{-1-\alpha_k} &\le (2-\alpha_k) |x_k-y_k|^{-1-\alpha_k} + (2-\beta_k) |x_k-y_k|^{-1-\beta_k}\\
&\le c (2-\alpha_k) |x_k-y_k|^{-1-\alpha_k},
\end{align*}
for every $x,y \in M_3(x_0)$ and $x_0 \in \R^d$ and $k \in \lbrace 1,\dots,d \rbrace$, where $c = c(\alpha_0,\am) > 0$ is some constant. Note that the parameter $\Lambda$ depends on $\alpha_0$ and $\am$.
\end{example}

\subsection{Mixed local-nonlocal operators}
\label{sec:lnl}

The second prototype example covers the case where the underlying stochastic process consists of jump processes in the first $d_1$ directions and diffusive components in the remaining $d_2$ directions. To this end, we consider $d_1,d_2\geq 1$, $d=d_1 + d_2$, and $\alpha_1,\dots,\alpha_{d_1}\in(\alpha_0,2)$ for some $\alpha_0 \in (0,2)$. 
Let $(\ma,A)$ be given as follows:
\begin{align*}
\ma(x,\d y)=\sum_{k=1}^{d_1} \Big( (2-\alpha_k) |x_k-y_k|^{-1-\alpha_k}\,  \d y_k\prod_{i\neq k}\delta_{\{x_i\}}(\d y_i) \Big), \quad (A_{j,k})_{j,k=d_1+1}^d = (\delta_{j,k})_{j,k=d_1+1}^d.
\end{align*}

Let us now verify $(\ma,A) \in \cK(\alpha_0,\Lambda)$ with a choice of $\Lambda$ that depends on $\alpha_1, \ldots, \alpha_{d_1}$ only through $\alpha_0$. 

\begin{remark}
The above choice of $(\ma, A)$ extends directly to more general situations because \autoref{assumption:symmetry}, \autoref{assumption:tail}, and \autoref{assumption:func-ineq} allow for non-translation invariant versions of $\ma$ and $A$. \autoref{thm:genmainresult-ln} covers such a general setting.
\end{remark}

Clearly, $(\ma,A)$ satisfies \autoref{assumption:symmetry},   \autoref{assumption:tail}, and \eqref{eq:fct-space-comp}. In order to verify $(\ma,A) \in \cK(\alpha_0,\Lambda)$ uniformly in $\alpha_1, \ldots, \alpha_{d_1}$, it remains to prove the Poincar\'e inequality \eqref{eq:poincare-assum} and the Sobolev inequality \eqref{eq:sobolev-assum}.

\begin{lemma}
\label{lemma:lnl-sob}
There is a constant $\Lambda=\Lambda(d, \alpha_0) \ge 1$ such that for every $r \in (0,2]$, $\lambda \in (1,2]$, $x_0\in \R^d$ and every $u\in V(M_{\lambda r}(x_0) | \R^d)$,
the following Sobolev inequality holds:
 \[ \Vert u \Vert_{L^{\frac{2\beta}{\beta-1}}(M_{r}(x_0))}^2 \le \Lambda \mathcal{E}_{M_{\lambda r}(x_0)}(u,u) + \Lambda r^{-2}\left( \sum_{k=1}^d (\lambda^{\frac{2}{\alpha_k}} - 1)^{-\alpha_k}\right) \Vert u \Vert_{L^2(M_{\lambda r}(x_0))}^2,\\ \]
\end{lemma}

\begin{proof}
First, one establishes a global Sobolev inequality 
\begin{align*}
\Vert u \Vert_{L^{\frac{2\beta}{\beta-1}}(\R^d)}^2 \le c \cE(u,u)
\end{align*}
as in \cite[Theorem 2.4]{ChKa20}, using that $\cE(u,u) = c \int_{\R^d} |\hat{u}(\xi)|^2 \Psi(\xi) \d \xi$, where $\Psi(\xi) = \sum_{k = 1}^{d_1} |\xi_k|^{\alpha_k} + \sum_{k = d_1 + 1}^d |\xi_k|^2$. Since $(\ma,A)$ satisfies \autoref{assumption:tail}, and thus \eqref{cutoffest}, a localization argument as in \cite[Theorem 2.5]{ChKa20} yields the desired result.
\end{proof}

The main work consists in the verification of the Poincar\'e inequality. Its proof relies on \autoref{thmpoincare} and crucially uses its robustness with respect to $\alpha_k \nearrow 2$.

\begin{theorem}
\label{lemma:lnl-poinc}
There is a constant $c_1=c_1(d) > 0$ such that for every $r \in (0,2]$, $x_0\in \R^d$ and every $u\in V(M_r(x_0)|\R^d)$,
the following Poincar\'e inequality holds:
 \[ \|u-[u]_{M_r(x_0)}\|_{L^2(M_r(x_0))}^2 \leq c_1r^{2}\mathcal{E}_{M_r(x_0)}(u,u). \]
\end{theorem}

First, we take $\alpha_{d_1},\dots,\alpha_d \in (0,2)$ and define for $r > 0$ and $x_0 \in \R^d$.
\begin{align*}
M_r'(x) &= \bigtimes_{k=1}^{d} \left(x_k-r^{2/\alpha_k},x_k+r^{2/\alpha_k}\right),\\
\ma'(x,\d y) &= \sum_{k=1}^{d} \Big( (2-\alpha_k) |x_k-y_k|^{-1-\alpha_k}\,  \d y_k\prod_{i\neq k}\delta_{\{x_i\}}(\d y_i) \Big).
\end{align*}
The first step in the proof of \autoref{lemma:lnl-poinc} is to establish convergence of the purely nonlocal energy form $\cE^{\ma'}$ towards $\cE$ as $\alpha_k \nearrow 2$ for all $k \in \lbrace d_1 + 1,\dots,d\rbrace$.  For a related result, we refer to \cite{DuBo23}, where a Bourgain-Brezis-Mironescu formula is proved for a similar family of nonlinear anisotropic energies.

\begin{lemma}\label{formconv}
Let $\alpha_{d_1},\dots,\alpha_d \in (0,2)$. Then, there is a constant $c_1 = c_1(d) > 0$ such that for every $r > 0$, $x_0 \in \R^d$ 
\begin{equation}
\cE^{\ma'}_{M_r'(x_0)}(u,u) \to c_1 \cE_{M_r(x_0)}(u,u) ~~ \forall u \in C^2_b(\R^d),
\end{equation}
as $\alpha_k \nearrow 2$ for all $k \in \lbrace d_1 + 1,\dots,d\rbrace$.
\end{lemma}

\begin{proof}
We show that for each $\rho > 0$, $x \in \R^d$, $k \in \lbrace d_1 + 1, \dots, d\rbrace$
\begin{align}
\label{eq:formconvaux}
\int_{M_{\rho}'(x)} \hspace{-0.2cm} (u(x)-u(y))^2 \mu^{(k)}(x,\d y) \to c(\partial_k u(x))^2, \quad \int_{\R^d \setminus M_{\rho}'(x)} \hspace{-0.2cm} (u(x)-u(y))^2 \mu^{(k)}(x,\d y) \to 0,
\end{align}
as $\alpha_k \nearrow 2$, where $c$ is independent of $x,\rho$. For the first claim in \eqref{eq:formconvaux}, we apply Taylor's theorem
\[(u(x) - u(x+e_k(y_k-x_k)))^2 = \partial_ku(x)^2(x_k-y_k)^2 + R(x,x+e_k(y_k-x_k))^2|x_k-y_k|^{3},  \]
where $R$ is a bounded remainder term. Hence
\begin{align*}
\begin{aligned}
&\int_{M_{\rho}'(x)} (u(x)-u(y))^2 \mu^{(k)}(x,\d y) = (2-\alpha_k) \int_{-\rho^{2/\alpha_k}}^{\rho^{2/\alpha_k}} (u(x)-u(x+e_k h_k))^2 |h_k|^{-1-\alpha_k} \, \d h_k \\
& = (2-\alpha_k) \left(\int_{-\rho^{2/\alpha_k}}^{\rho^{2/\alpha_k}} (\partial_k u(x))^2|h_k|^{1-\alpha_k} \, \d h_k + \int_{-\rho^{2/\alpha_k}}^{\rho^{2/\alpha_k}} R(x,x+e_k h_k)^2 |h_k|^{2-\alpha_k} \, \d h_k\right) \\
& =(2-\alpha_k)(I_1+I_2).
\end{aligned}
\end{align*}
Let us first consider $I_1$:
\begin{equation*}
(2-\alpha_k) I_1 = (\partial_k u(x))^2 (2-\alpha_k)\int_{-\rho^{2/\alpha_k}}^{\rho^{2/\alpha_k}} |h_k|^{1-\alpha_k} \, \d h_k = 2(\partial_k u(x))^2 \rho^{4/\alpha_k - 2} \to 2(\partial_k u(x))^2,
\end{equation*}
as $\alpha_k \nearrow 2$.
Furthermore, we have for $I_2$:
\begin{equation*}
(2-\alpha_k)I_2 \le (2-\alpha_k) c_2 \int_{-\rho^{2/\alpha_k}}^{\rho^{2/\alpha_k}} |h_k|^{2-\alpha_k} \, \d h_k \le \frac{2-\alpha_k}{3-\alpha_k} c_2\rho^{6/\alpha_k - 2} \to 0,
\end{equation*}
as $\alpha_k \nearrow 2$, where $c_2 > 0$. 
Together, this proves the first claim in \eqref{eq:formconvaux}. For the second claim:
\begin{equation*}
\begin{split}
\int_{\R^d \setminus M_{\rho}'(x)} (u(x)-u(y))^2 \mu^{(k)}(x,\d y) &\le 8(2-\alpha_k)\Vert u \Vert^2_{\infty} \int_{\rho^{2/\alpha_k}}^{\infty} \vert h_k \vert^{-1-\alpha_k} \, \d h_k\\
&\le 8(2-\alpha_k)\alpha_k^{-1} \Vert u \Vert^2_{\infty} \rho^{-2} \to 0.
\end{split}
\end{equation*}
This proves \eqref{eq:formconvaux}. The result follows by choosing for each $x \in M_r'(x_0)$ a number $\rho = \rho (x,x_0,r) \in (0,1)$ such that $M_{\rho}'(x) \subset M_r'(x_0)$ and applying \eqref{eq:formconvaux} to show that
\begin{equation*}
\int_{M_{r}'(x_0)} (u(x)-u(y))^2 \mu^{(k)}(x,\d y) \to c(\partial_k u(x))^2.
\end{equation*}
Finally, since by similar arguments as above:
\begin{equation*}
\begin{split}
&\int_{M_{r}'(x_0)} (u(x)-u(y))^2 \mu^{(k)}(x,\d y)\\
& \le \int_{M_{1}'(x)} (u(x)-u(y))^2 \mu^{(k)}(x,\d y) + \int_{\R^d \setminus M_{1}'(x)} (u(x)-u(y))^2 \mu^{(k)}(x,\d y) \\
&\le 2\Vert\partial_k u\Vert_{\infty}^2 + 4\frac{2-\alpha_k}{\alpha_k}\Vert u\Vert_{\infty}^2 < \infty,
\end{split}
\end{equation*}

we are in the position to apply dominated convergence and conclude the proof.

\end{proof}

Since \autoref{formconv} only holds true for smooth functions, we require a Meyers-Serrin-type result for $H(\Omega)$. Its proof goes in a similar way as \cite[Theorem 3.67]{Fog20}\footnote{We thank G. Foghem for helpful discussions on density results in nonlocal Sobolev spaces.} but extends the ideas therein to L\'evy measures that are not necessarily absolutely continuous with respect to the Lebesgue measure. Note that related density results (for non-singular L\'evy measures) have been investigated in the literature for instance in \cite{DyKi22}, \cite{FSV15}, \cite{LuVa17}, and \cite{FoKa22}.

\begin{lemma}
\label{lemma:MS-loc-nonloc}
Let $\Omega \subset \R^d$ be open and bounded and $u \in V(\Omega | \R^d)$. Then, for any $\eps > 0$, there is $v \in C^{\infty}_c(\R^d)$ such that $\Vert u - v \Vert_{H(\Omega)} < \eps$.
\end{lemma}

\begin{proof}
Let $u \in V(\Omega | \R^d)$. 

\textbf{Step 1:} Let $\phi \in C_c^{\infty}(B_1(0))$ with $\phi \ge 0$ and $\int_{B_1(0)} \phi \d x = 1$ and consider the mollifier $\phi_{\delta}(x) = \delta^{-d} \phi(x/\delta)$. Let $\Omega'\Subset \Omega$ be an open set. We claim that for any $0 < \delta < \dist(\Omega',\partial \Omega)$ it holds $\phi_{\delta} \ast u \in C^{\infty}(\Omega') \cap V^{(k)}(\Omega' | \R^d)$, and moreover
\begin{align}
\label{eq:approx-convolution}
\Vert \phi_{\delta} \ast u - u \Vert_{V^{(k)}(\Omega' | \R^d)} := \Vert \phi_{\delta} \ast u - u \Vert_{L^2(\Omega')} + [ \phi_{\delta} \ast u - u ]_{V^{(k)}(\Omega' | \R^d)} \to 0, ~~ \text{ as } \delta \searrow 0
\end{align} 
for every $k \in \{1,\dots,d\}$.
We will first give the proof for $k \in \{ 1,\dots,d_1 \}$. To see that $\phi_{\delta} \ast u \in V^{(k)}(\Omega' | \R^d)$, it suffices to compute for $\delta < \dist(\Omega',\partial \Omega)$ using Jensen's inequality
\begin{align*}
[u \ast \phi_{\delta}]_{V^{(k)}(\Omega'|\R^d)}^2 &\le \int_{\Omega'} \int_{\R^d} \left| \int_{\R^d} \phi(z) (u(x-\delta z) - u(x - \delta z + h)) \d z \right|^2 \mu^{(k)}(0,\d h) \d x\\
&\le \int_{B_1} \phi(z) \left(\int_{\Omega'} \int_{\R^d} |u(x-\delta z) - u(x - \delta z + h)|^2  \mu^{(k)}(0,\d h) \d x\right) \d z \\
&\le \int_{B_1} \phi(z) \left(\int_{\Omega} \int_{\R^d} |u(x) - u(x+h)|^2  \mu^{(k)}(0,\d h) \d x\right) \d z < \infty,
\end{align*}
where we used that for $x \in \Omega'$ and $z \in B_1$, it holds $x-\delta z \in \Omega'+ B_{\delta} \subset \Omega$.
In order to show \eqref{eq:approx-convolution}, we get from a similar computation using Fubini's theorem
\begin{align*}
[u \ast &\phi_{\delta} - u]_{V^{(k)}(\Omega'|\R^d)}^2 \\
&\le \int_{B_1} \phi(z) \left(\int_{\Omega'} \int_{\R^d} |[u(x-\delta z) - u(x - \delta z + h)]-[u(x)-u(x+h)]|^2  \mu^{(k)}(0,\d h) \d x\right) \d z \\
&= \int_{B_1} \int_{\R^d} \left( \int_{\Omega'} |U_h(x-\delta z)-U_h(x)|^2 \d x \right) \mu^{(k)}(0,\d h) \phi(z) \d z,
\end{align*}
where we denote $U_h(x) = u(x) - u(x+h)$. Note that $U_h(\cdot - \delta z) \in L^2(\Omega')$ for a.e. $h \in \R^d$ and $z \in B_1$ since $u,u \ast \phi_{\delta} \in V^{(k)}(\Omega'| \R^d)$. Thus, by continuity of the shift in $L^2(\Omega')$ it holds
\begin{align*}
\int_{\Omega'} |U_h(x-\delta z)-U_h(x)|^2 \d x \to 0 ~~ \text{ as } \delta \searrow 0
\end{align*}
for a.e. $h \in \R^d$ and $z \in B_1$. Since we also have
\begin{align*}
\left( \int_{\Omega'} |U_h(x-\delta z)-U_h(x)|^2 \d x \right) \le 4 \int_{\Omega} |U_h(x)|^2 \d x \in L^1(B_1 \times \R^d ; \phi \d z \times \mu^{(k)}(0,\d h)), 
\end{align*}
we deduce from the dominated convergence theorem that $[u \ast \phi_{\delta} - u]_{V^{(k)}(\Omega'|\R^d)} \to 0$, as $\delta \searrow 0$.\\
Moreover, we have for any $k \in \{ d_1+1,\dots,d \}$
\begin{align}
\label{eq:approx-local}
\begin{split}
[\partial_k(\phi_{\delta} \ast u - u)]_{L^2(\Omega')}^2 &= \int_{\Omega'} \left( \int_{\R^d} \phi(z)(\partial_k u(x-\delta z) - \partial_k u(x)) \d z \right)^2 \d x\\
&\le \int_{B_1(0)} \hspace{-0.1cm} \left( \int_{\Omega'} |\partial_k u(x-\delta z) - \partial_k u(x)|^2 \d x \right) \phi(z) \d z \to 0 ,~~ \text{ as } \delta \searrow 0
\end{split}
\end{align}
by the continuity of the shift in $L^2(\Omega')$ and dominated convergence. Altogether, we get \eqref{eq:approx-convolution}.

\textbf{Step 2}: Using \eqref{eq:approx-convolution}, the desired result follow by the same standard covering argument as in the proof of \cite[Theorem 3.67]{Fog20}, where also the approximating function $v \in C^{\infty}_c(\R^d)$ is defined in the exact same way. For $k \in \{1,\dots, d_1 \}$, we follow the steps in \cite[Theorem 3.67]{Fog20} in order to show $\cE^{(k)}_{\Omega}(u-v,u-v) < \eps$, and for $k \in \{ d_1 + 1, \dots, d \}$ we use
\begin{align*}
\cE^{(k)}_{\Omega}(u-v,u-v) = [\partial_k(\phi_{\delta} \ast u_j - u_j)]^2_{L^2(\Omega)} = [\partial_k(\phi_{\delta} \ast u_j - u_j)]^2_{L^2(O_{j+4})} \to 0, ~~ \text{ as } \delta \searrow 0,
\end{align*}
as a consequence of \eqref{eq:approx-local}. Altogether, these ingredients yield the desired result.
\end{proof}

We are now ready to prove \autoref{lemma:lnl-poinc}.

\begin{proof}[Proof of \autoref{lemma:lnl-poinc}]
Recall that by \autoref{thmpoincare} the following anisotropic and nonlocal robust Poincar\'e inequality holds, given $\alpha_{d_1+1},\dots,\alpha_d \in (0,2)$:
\[ \|u-[u]_{M_r'(x_0)}\|_{L^2(M_r'(x_0))}^2 \leq c_1r^{2}\mathcal{E}^{\ma}_{M_r'(x_0)}(u,u),\]
where $c_1 = c_1(d) > 0$ only depends on $d$. Taking the limit $\alpha_{d_1+1},\dots,\alpha_d \nearrow 2$ on both sides,
\[ \|u-[u]_{M_r(x_0)}\|_{L^2(M_r(x_0))}^2 \leq c_1r^{2}\mathcal{E}_{M_r(x_0)}(u,u),\]
for $u \in C^2_b(\R^d)$, where we used \autoref{formconv}. We obtain the result for any $u \in V(M_r(x_0) | \R^d)$, by application of the density result \autoref{lemma:MS-loc-nonloc}, thereby concluding the proof.
\end{proof}

Since $L$ defined in \eqref{eq:frac_aniso_laplace} satisfies the Poincar\'e- and Sobolev inequality \eqref{eq:poincare-assum} and \eqref{eq:sobolev-assum}, the verification of these properties for other local-nonlocal operators is reduced to comparing their energy with the one of $L$. This observation is summarized in the following theorem and allows to construct further interesting examples of admissible operators.

\begin{theorem}
\label{thm:genmainresult-ln}
Let $\mu = (\mu(x,\cdot))_{x \in \R^d}$ be a symmetric family of measures on $\R^d$, and $A = \left(A_{j,k}\right)_{j,k = d_1+1}^d : I \times \R^d \rightarrow \R^{d_2 \times d_2}$, and $a : I \times \R^d \times \R^d \rightarrow [1,2]$ be measurable functions. Furthermore, assume that $A(t,x)$ is a symmetric matrix and that $a(t,x,y) = a(t,y,x)$ for every $(t,x,y)$. Let $\alpha_1,\dots,\alpha_{d_1} \in (0,2)$. For $u : I \times \R^d \rightarrow \R$, define
\begin{align}
\label{eq:Lgen-ln}
-\mathcal{L} u (t,x) = \text{ p.v.} \int_{\R^d} (u(t,x) - u(t,y)) a(t,x,y)\mu(x,\d y) - \dvg \left(A(t,x)(\partial_k u(t,x))_{k = d_1+1}^{d}\right).
\end{align}
Assume that there is $\Lambda \ge 1$ such that the following holds true:
\begin{itemize}
\item[(i)] For every $x_0 \in \R^d$, $\rho \in (0,2)$ and every $k \in \lbrace 1, \dots , d_1 \rbrace$:
\begin{align}
\label{ass:tailestimate-ln}
\mu(x_0, \R^d \setminus E_{\rho}^k(x_0)) \le \Lambda (2-\alpha_k)\rho^{-2}.
\end{align}

\item[(ii)] For every $x_0 \in \R^d$, $\rho \in (0,3)$ and every $v \in L^2(M_{\rho}(x_0))$:
\begin{align}
\label{ass:nlcomparability-ln}
& \Lambda^{-1} \cE^{\mu}_{M_{\rho}(x_0)}(v,v) \le \cE^{\ma}_{M_{\rho}(x_0)}(v,v) \le \Lambda \cE^{\mu}_{M_{\rho}(x_0)}(v,v).
\end{align}

\item[(iii)] For every $t \in I$, $ x_0 \in \Omega$ and $\xi = (\xi_{d_1+1},\dots,\xi_d) \in \R^{d_2}$:
\begin{align}
\label{ass:lcomparability-ln}
\Lambda^{-1} \vert \xi \vert^2 \le \sum_{j,k = d_1 + 1}^d A_{j,k}(t,x_0)\xi_j \xi_k\le \Lambda \vert \xi \vert^2.
\end{align}
\end{itemize}
Then, \autoref{thm:weakHarnack} holds true for $\mathcal{L}$ as in \eqref{eq:Lgen-ln}. If in addition the following holds true:
\begin{itemize}
\item[(iv)] For some $\beta > 0$ and every $x_0 \in \R^d$, $\rho > 2$ and every $k \in \lbrace 1, \dots , d_1 \rbrace$:
\begin{align}
\label{ass:tailestimate-ln-glob}
\mu(x_0, \R^d \setminus E_{\rho}^k(x_0)) \le \Lambda (2-\alpha_k)\rho^{-\beta},
\end{align}
\end{itemize} 
then also \autoref{thm:HR} holds true for $\mathcal{L}$ as in \eqref{eq:Lgen-ln}.
\end{theorem}

\autoref{thm:genmainresult-ln} does not require the family of measures $(\mu(x, \cdot ))_{x \in \R^d}$ to have a product structure as in \eqref{def:muaxes}. Below, we provide a corresponding example $(\mu, A)$.

\begin{example}
Let $\alpha \in (0,2)$ and for $x \in \R^d$:
\begin{align*}
\mu(x,\d y) = (2-\alpha)\left\vert \left(x-y \right)_{k=1}^{d_1} \right\vert^{-d_1-\alpha}\d y_1 \dots \d y_{d_1}\delta_{x_{d_1+1}}(\d y_{d_1 + 1})\dots \delta_{x_d}(\d y_d).
\end{align*}
Define $a \equiv 1$ and $(A_{j,k})_{j,k=d_1+1}^d = (\delta_{j,k})_{j,k=d_1+1}^d$. Then $\mathcal{L}$ reads
\begin{align*}
-\mathcal{L}u(x) &= \text{ p.v.} \int_{\R^d}(u(x)-u(y))\mu(x,\d y) - \sum_{k = d_1 + 1}^{d}  \partial_{k}^2 u(x)\\
&= \left(\sum_{k = 1}^{d_1} -\partial_k^2 \right)^{\alpha/2}u(x) + \left(\sum_{k = d_1 + 1}^{d} -\partial_k^2\right) u(x).
\end{align*}
$\mathcal{L}$ satisfies the assumptions (i)-(iv) of \autoref{thm:genmainresult-ln} with $\alpha = \alpha_1 = \dots = \alpha_{d_1}$ and some $\Lambda \ge 1$. Except for (ii), this follows by simple arguments. For (ii), we refer the reader to \cite{KaSc14}.\\
Note that the stochastic process $Z_t$ associated to $\mathcal{L}$ is given by $Z_t = (X_t,Y_t)$, where $X_t,Y_t$ are independent, and $X_t$ is the $d_1$-dimensional rotationally invariant $\alpha$-stable process and $Y_t$ is a $d_2$-dimensional Brownian motion.

\end{example}

\begin{remark}
Note that the aforementioned results are also satisfied by the operator $L$ as in \eqref{eq:frac_aniso_laplace} when $d_1 = 0$ or $d_2 = 0$, as long as $d \ge 2$. In the case $d_1 = 0$ we re-obtain the classical local situation since $L$ becomes the Laplace operator, whereas the case $d_2 = 0$ is covered in \autoref{sec:mu-axes}.
\end{remark}

\subsection{Nonlocal operators with jumping measures supported on cusps}
\label{sec:cusps}

We complete this section with an example of an admissible family $(\mu(x,\cdot))_{x \in \R^d}$ that is absolutely continuous with respect to the Lebesgue measure. It still satisfies the assumptions of our main results \autoref{thm:weakHarnack} and \autoref{thm:HR}. To be more precise, we show that there are a set $\Gamma$ and a parameter $\gamma$, which can be very large (in particular larger than $2$), such that $\mu(t,x,\d y) \asymp \mathds{1}_{\Gamma}(x-y) |x-y|^{-d-\gamma}$ satisfies
\autoref{assumption:symmetry}, \autoref{assumption:tail}, and \autoref{assumption:func-ineq}. In the special case $\alpha_1=\cdots=\alpha_d$, it turns out that $\Gamma=\R^d$ and $\gamma=\alpha_1=\cdots=\alpha_d$. Therefore a nice feature of this example is that it recovers the fractional Laplacian case (see \cite{DyKa15} or \cite{KaSc14} for instance).

We demonstrate the example in two dimensions to keep the calculations simple and the notation clear.
Let $\alpha_0\in(0,2)$ be given and consider $\alpha_1,\alpha_2\in [\alpha_0,2)$. We start our analysis by the definition of important quantities and introducing the family of admissible measures.
\begin{definition}
For given $\alpha_1,\alpha_2\in[\alpha_0,2)$, let
\begin{align}\label{gamma}
 \gamma= \frac{|\alpha_1-\alpha_2|+\alpha_1\alpha_2}{\min\{\alpha_1,\alpha_2\}}.
\end{align}
Furthermore, let $b=(b_1,b_2)\in\R^2$ be defined by
\begin{align}\label{b}
 b_1=\frac{1}{1+\gamma-\alpha_1} \quad \text{and} \quad b_2=\frac{1}{1+\gamma-\alpha_2}. 
\end{align}
\end{definition}
Note that from the definition of $b_1,b_2$ and $\gamma$, it can easily be seen that
\begin{align}\label{eq:bgamma}
b_1(-1-\gamma)\geq -1-\alpha_2 \quad \text{and} \quad b_2(-1-\gamma)\geq -1-\alpha_1.
\end{align} 
Using the predefined quantities, we are ready to introduce the family of measures $(\mu(x,\cdot))_{x \in \R^d}$.

\begin{example}\label{exa:cusp}
Let
\begin{align*}
\Gamma &:= \{(x_1,x_2)\in \R^2 \colon |x_2|\leq |x_1|^{1/b_1} \ \text{or} \ |x_1|\leq |x_2|^{1/b_2} \}\quad \text{and} \quad \Gamma(z):= \Gamma-\{z\}.
\end{align*}
Furthermore, we define the kernel $k:\R^2\times\R^2\to [0,\infty]$ as follows:
\[ k(z) = C(\gamma,b)|z|^{-2-\gamma} \mathds{1}_{\Gamma}(z), \]
where $C(\gamma,b):=1-\gamma+\frac{1}{\max\{b_1,b_2\}}=(2-\alpha_{\max})$.\
Finally, let
\begin{align}
\mu(x,\d y) = k(x-y)\, \d y.
\end{align}
\end{example}

\begin{remark}
Note that the value of $\gamma$ is very large if $\min\{\alpha_1,\alpha_2\} \ll \max\{\alpha_1,\alpha_2\}$ is very small. As a result the set $\Gamma$ will have strong cusps close to the origin. Actually $\gamma$ can be bounded from below by $\gamma\geq\max\{\alpha_1,\alpha_2\}$. In order to see this, assume without loss of generality $\alpha_1\leq\alpha_2$. Then
\[   \gamma-\alpha_2 = \frac{\alpha_2-\alpha_1 + \alpha_1\alpha_2}{\alpha_1}-\alpha_2 = \frac{\alpha_2-\alpha_1 + \alpha_1\alpha_2-\alpha_1\alpha_2}{\alpha_1} = \frac{\alpha_2-\alpha_1}{\alpha_1}\geq 0. \] 
\end{remark}
By the lower bound for $\gamma$ and the definition of $b_1,b_2$, we have $0<b_1,b_2\le 1$. 
Hence, we have that 
\begin{align}\label{eq:fullcomplement}
 \Gamma\cap \left(\R^2\setminus [-1,1]^2\right) =  \R^2\setminus [-1,1]^2.
 \end{align}
 Furthermore, note that $\gamma$ is bounded from above in the following way:
 \begin{align}\label{eq:upperbdgamma}
 \gamma\leq 1+\frac{2}{\alpha_0}.
 \end{align}

Let us illustrate some examples for $\Gamma$ for different values of $\alpha_1,\alpha_2\in(0,2)$. Due to \eqref{eq:fullcomplement}, we restrict the figures to the relevant area $[-1,1]^2$.

\begin{figure}[H]
 \begin{minipage}{0.3\textwidth}
\resizebox{\textwidth}{!}{%
\includegraphics{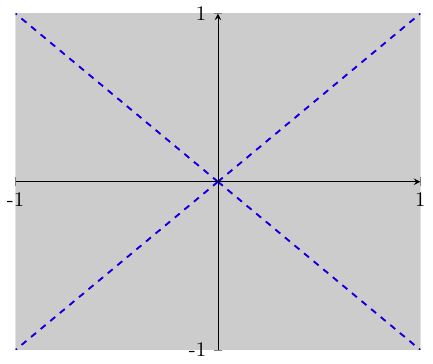}}
\end{minipage}
 \begin{minipage}{0.3\textwidth}
\resizebox{\textwidth}{!}{%
\includegraphics{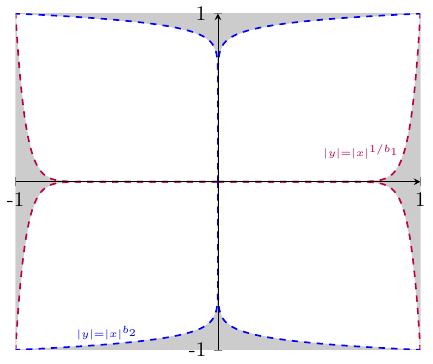}}
\end{minipage}
 \begin{minipage}{0.3\textwidth}
\resizebox{\textwidth}{!}{%
\includegraphics{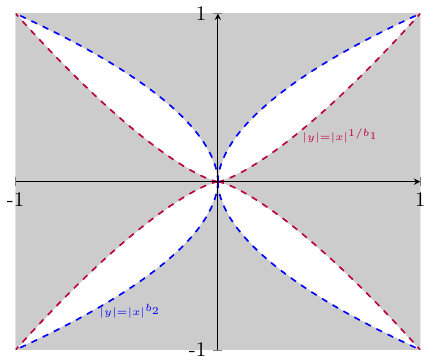}}
\end{minipage}
\caption{The three figures show examples of the set $\Gamma$. The first figure shows the case $\alpha_1=\alpha_2\in(0,2)$. 
The second figure shows the case where the distance between $\alpha_1$ and $\alpha_2$ is relatively large. 
In this case ($\alpha_1=0.1$, $\alpha_2=1.9$), we see that $\Gamma$ is quite large near the origin. Here: $\gamma = 19.9$. 
The last figure shows the example $\alpha_1=1.97$, $\alpha_2=1.48$. In this case $\gamma \approx 2.3$.}
\end{figure}

\autoref{assumption:symmetry} follows immediately by the definition of $\mu$. Hence it remains to prove \autoref{assumption:tail} and \autoref{assumption:func-ineq}. \\
If we consider the case $\alpha_1=\alpha_2=\alpha\in(0,2)$ for some $\alpha\in(0,2)$, we have $\gamma=\alpha$ and $b_1=b_2=1$. Hence, $k(x,y) = (2-\alpha)|x-y|^{-2-\alpha}$. In this case, it is known that the family of measures $\mu(x,\d y)=k(x,y)\, d y$ satisfies \eqref{eq:tailbd}. Furthermore, the corresponding energy form for $\ma$ is 
comparable to the $H^{\alpha/2}$-seminorm, which implies the second part of the subsequent \autoref{thm:comparability}.

\begin{theorem}\label{thm:comparability}
Let $\alpha_1,\alpha_2\in[\alpha_0,2)$.
\begin{enumerate}
\item There is a constant $\Lambda>0$, depending on $\alpha_0$ only, such that for every $x_0\in\R^2, r>0$ and every $k\in\{1,2\}$
\begin{align}\label{eq:tailbd}
\mu(x_0,\R^2\setminus E^k_r(x_0))\leq \Lambda (2-\alpha_k) r^{-\alpha_{\max}}.
\end{align} 

\item There is a constant $\Lambda \ge 1$ such that $\mu$ satisfies \autoref{assumption:func-ineq}.

\end{enumerate}
\end{theorem}

\begin{proof}[Proof of \autoref{thm:comparability}]
We first prove the tail estimate \eqref{eq:tailbd}, which follows by a direct calculation. 
Let $x_0\in\R^2$ and $r>0$.
\begin{figure}[H]
\includegraphics{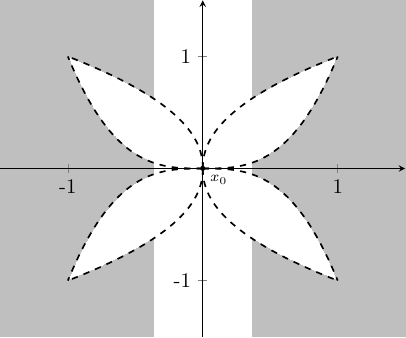}
\caption{The gray colored area shows the set $(\R^2\setminus E_r^1(x_0))$ and its intersection with $\Gamma(x_0)$ for some $r\in(0,1)$ and some given $\alpha_1,\alpha_2$.}
\end{figure}
We just prove the assertion in the case $k=1$. The case $k=2$ follows similarly. 
Note, that we can easily estimate the volume $\mu(x_0,\R^2\setminus E_{r}^1(x_0))$ as follows:
\begin{align*}
& \mu(x_0,\R^2\setminus E_{r}^1(x_0)) = \int_{\R^2\setminus E_r^1(x_0)} \mu(x_0,\d y) \\
& \leq 4(2-\alpha_{\max})\int_{r^{\alpha_{\max}/\alpha_1}}^{1} \int_{0}^{h_1^{1/b_1}} |h|^{-2-\gamma} \, \d h_2\, \d h_1 
+ 4(2-\alpha_{\max})\int_{r^{\alpha_{\max}/\alpha_1}}^{1} \int_{h_1^{b_2}}^{\infty} |h|^{-2-\gamma} \, \d h_2\, \d h_1 \\
&\qquad+ (2-\alpha_{\max})\int_{\R^2\setminus [-1,1]^2} |h|^{-2-\gamma} \, \d h.\\
& = 4(2-\alpha_{\max}) I_1 + 4(2-\alpha_{\max}) I_2 + (2-\alpha_{\max}) I_3.
\end{align*}
Let us first study $I_1$. We have, 
\begin{align*}
I_1  \leq \int_{r^{\alpha_{\max}/\alpha_1}}^{\infty} \int_{0}^{h_1^{1/b_1}} h_1^{-2-\gamma} \, \d h_2\, \d h_1 = \int_{r^{\alpha_{\max}/\alpha_1}}^{\infty} h_1^{-1-\alpha_1} \, \d h_1
 = \frac{1}{\alpha_1} r^{-\alpha_{\max}}.
\end{align*}
Furthermore, the term $I_2$ can be estimated by using the fact that $h_1\in[r^{\alpha_{\max}/\alpha_1},1]$, which means in particular $h_1\leq 1$. 
\begin{align*}
I_2 &  \leq \int_{r^{\alpha_{\max}/\alpha_1}}^{\infty} \int_{h_1^{b_2}}^{\infty} h_2^{-2-\gamma} \, \d h_2\, \d h_1 = \frac{1}{1+\gamma} \int_{r^{\alpha_{\max}/\alpha_1}}^{\infty} h_1^{(-1-\gamma)b_2} \,  \d h_1  \\
& \leq \frac{1}{1+\gamma} \int_{r^{\alpha_{\max}/\alpha_1}}^{\infty} h_1^{-1-\alpha_1} \,  \d h_1 
 = \frac{1}{\alpha_1(1+\gamma)}r^{-\alpha_{\max}},
\end{align*}
where we used the inequality $b_2(-1-\gamma)\geq -1-\alpha_1$ from \eqref{eq:bgamma}.\\
It remains to estimate $I_3$. The formula for integration of rotational symmetric functions leads to the existence of a uniform constant $c_1>0$ such that
\begin{align*}
I_3 = \int_{\R^2\setminus [-1,1]^2} |h|^{-2-\gamma} \, \d h \leq c_1 \int_{1}^{\infty} s^{-\gamma-1}\, \d s = \frac{c_1}{\gamma} \leq  \frac{c_1}{\gamma} r^{-\alpha_{\max}}.
\end{align*}

Hence, there is a constant $c>0$, depending on $\alpha_0$, such that
\begin{align*}
\mu(x_0,\R^2\setminus E_r^1(x_0)) \leq c(2-\alpha_{\max}) r^{-\alpha_{\max}}, 
\end{align*}
which proves assertion \eqref{eq:tailbd}.

Next, we verify \autoref{assumption:func-ineq}. Before we address the proof, let us give a decomposition of the set $M_r \cap \Gamma(x)$. 
We define for given $r>0$ and $x\in\R^2$
\begin{align*}
A^r(x)& := \{ z\in M_r\colon |z_2-x_2|\leq |z_1-x_1|^{1/b_1}\}\cap \{ z\in\R^2\setminus M_1 \colon |z_2-x_2|\leq |z_1-x_1| \},\\
\underline{A}^r(x)& : = \{ z \in A^r(x) : z_1 \le x_1 \},~ \overline{A}^r(x) = A^r(x) \setminus \underline{A}^r(x),\\
B^r(x)& :=\{ z\in M_r\colon |z_1-x_1|\leq |z_2-x_2|^{1/b_2}\}\cap \{ z\in\R^2\setminus M_1 \colon |z_1-x_1|\leq |z_2-x_2| \},\\
\underline{B}^r(x)& : = \{ z \in B^r(x) : z_2 \le x_2 \},~ \overline{B}^r(x) = A^r(x) \setminus \underline{B}^r(x).
\end{align*}

 \begin{figure}[H]
 \begin{minipage}{0.45\textwidth}
\resizebox{\textwidth}{!}{%
\includegraphics{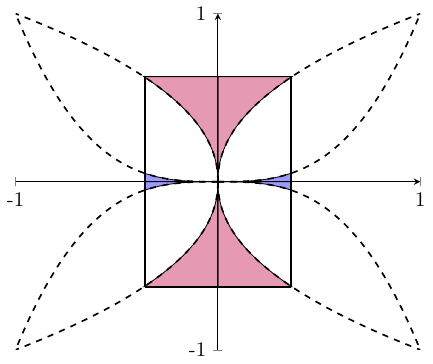}}
\end{minipage}
 \begin{minipage}{0.45\textwidth}
\resizebox{\textwidth}{!}{%
\includegraphics{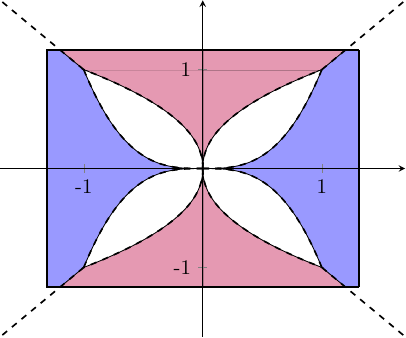}}
\end{minipage} {\ } \\

 \begin{minipage}{0.45\textwidth}
\resizebox{\textwidth}{!}{%
\includegraphics{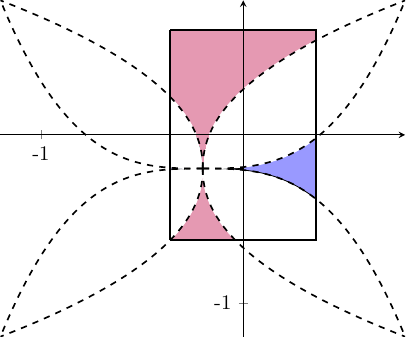}}
\end{minipage}
 \begin{minipage}{0.45\textwidth}
\resizebox{\textwidth}{!}{%
\includegraphics{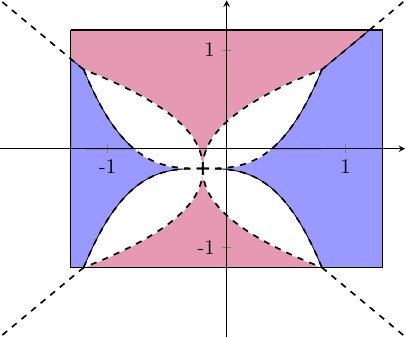}}
\end{minipage}
\caption{This figure illustrates the sets $A^r(0)$, $B^r(0)$ (top) resp. $A^r(x)$, $B^r(x)$ (bottom) for $x=(-0.2,-0.2)$ and some $r<1$ on the left and $r>1$ on the right. 
The boundary of the set $M_r$ is given by the black edged rectangle. 
The set $A^r$ resp. $A^r(x)$ is given by the blue colored set, while the red colored set shows $B^r$ resp. $B^r(x)$.
As we can see, the union of sets $A^r(0)$ and $B^r(0)$ resp. $A^r(x)$ and $B^r(x)$ is the intersection of $M_r$ with $\Gamma(0)$
resp. $M_r$ with $\Gamma(x)$.}
 \end{figure}
 {\ } \\
 
We start by proving the Sobolev inequality \eqref{eq:sobolev-assum}. Without loss of generality, we assume $x_0=0$. Let $r \in (0,3]$, $\lambda > 1,$ and $u\in H(M_{\lambda r})$. Let $\tau = \tau_{r,\lambda}$ be as in \autoref{cutofffct}. Then $u \equiv \tau u$ in $M_r$. We have by the Plancherel formula and Fubini's theorem
\begin{align*}
\mathcal{E}_{M_{r}}^{\ma} (u,u) & = \int_{M_r} \int_{M_r} (u(x)-u(y))^2\, \ma(x,\d y) \, \d x \\
& \le \int_{\R^2} \int_{M_{\lambda r}} (\tau u(x)-\tau u(x+z))^2 \ma(0,\d z) \, \d x \\
& \le \int_{M_{\lambda r}} \int_{\R^2} 4 \sin^2\left(\frac{\xi \cdot z}{2}\right) \vert \widehat{\tau u}(\xi)\vert^2 \d \xi \, \ma(0, \d z)\\
& \le c \sum_{k = 1}^2 \int_{\R^2} \left( \int_{0}^{(\lambda r)^{\am/\alpha_k}} \sin^2\left(\frac{\xi_k z_k}{2}\right) \vert z_k \vert^{-1-\alpha_k} \d z_k \right) \vert \widehat{\tau u}(\xi)\vert^2 \d \xi \\
& = c\sum_{k = 1}^2 \int_{\R^2} J_k(\xi) \vert \widehat{\tau u}(\xi)\vert^2 \d \xi,
\end{align*}
where $c > 0$ is a constant and
\begin{equation}
J_k(\xi) = \int_{0}^{(\lambda r)^{\am/\alpha_k}} \sin^2\left(\frac{\xi_k z_k}{2}\right) \vert z_k \vert^{-1-\alpha_k} \d z_k, ~~ \xi \in \R^2, ~k = 1,2.
\end{equation}

Note that for every $a > 0$, $\xi \in \R^2$ and $z_1 \in (0,(\lambda r)^{\am/\alpha_1})$ it holds:
\begin{equation}
\label{eq:cusphelp}
\sin^2\left(\frac{\xi_1 z_1}{2}\right) \le \frac{2}{a} \int_{-a}^a \sin^2\left(\frac{\xi_1 z_1 + \xi_2 z_2 }{2}\right) \d z_2.
\end{equation}
This follows from $\sin^2(x/2) \le 2\left(1 - \cos(x)\frac{\sin{y}}{y} \right)$ and the fact that
\begin{equation}
1 - \cos(\xi_1 z_1)\frac{\sin(\xi_2 a)}{\xi_2 a} = \frac{1}{a} \int_{-a}^a \sin^2 \left(\frac{\xi_1 z_1 + \xi_2 z_2}{2} \right) \d z_2.
\end{equation}

We want to apply \eqref{eq:cusphelp} with $a = z_1^{1/b_1} \wedge z_1$ for $z_1\in (0,(\lambda r)^{\am/\alpha_1}]$. Note that there is a constant $c_1 > 1$ such that $\vert z_1 \vert^{-1} \le c_1 \vert z_1\vert^{-1/b_1}$ for all $z_1$ and therefore $1/a \le c_1 \vert z_1\vert^{-1/b_1}$. 
Using \eqref{eq:cusphelp}, we compute
\begin{align*}
J_1(\xi) & = \int_{0}^{(\lambda r)^{\am/\alpha_1}} \sin^2\left(\frac{\xi_1 z_1}{2}\right) \vert z_1 \vert^{-1-\alpha_1} \d z_1 \\
& \le c_1 \int_{0}^{(\lambda r)^{\am/\alpha_1}}  \int_{-(z_1^{1/b_1} \wedge z_1)}^{z_1^{1/b_1} \wedge z_1} \sin^2\left(\frac{\xi \cdot z}{2}\right) \vert z_1\vert ^{-1/b_1} \vert z_1 \vert^{-1-\alpha_1} \d z\\
& = c_1\int_{0}^{(\lambda r)^{\am/\alpha_1}}  \int_{-(z_1^{1/b_1} \wedge z_1)}^{z_1^{1/b_1} \wedge z_1} \sin^2\left(\frac{\xi \cdot z}{2}\right) \vert z_1 \vert^{-2-\gamma} \d z \\
& \le c_1\int_{A^{\lambda r}(0)} \sin^2\left(\frac{\xi \cdot z}{2}\right) \vert z \vert^{-2-\gamma} \d z.
\end{align*}
In the ultimate step, we used that $\vert z_2 \vert \le \vert z_1 \vert$ in $A^{\lambda r}(0)$.\\
Analogously, one establishes 
\begin{align*}
J_2(\xi) \le c_2\int_{B^{\lambda r}(0)} \sin^2\left(\frac{\xi \cdot z}{2}\right) \vert z \vert^{-2-\gamma} \d z,
\end{align*}
where $c_2 > 0$ is some constant.
Together, we obtain, using again Fubini's theorem and the Plancherel formula
\begin{align}
\label{eq:cusp-Sob-help-1}
\begin{split}
\mathcal{E}_{M_{r}}^{\ma} (u,u) & \le c_3 \int_{\R^2} \left(\int_{\Gamma(0)} \sin^2\left(\frac{\xi \cdot z}{2}\right) \vert z \vert^{-2-\gamma} \d z \right) \vert \widehat{\tau u}(\xi)\vert^2 \d \xi \\
& = c_4 \int_{\R^2} \int_{\R^2} 4\sin^2\left(\frac{\xi \cdot z}{2}\right)\vert \widehat{\tau u}(\xi)\vert^2 \d \xi \, \mu(0,\d z) \\
& = c_4 \cE^{\mu}(\tau u,\tau u),
\end{split}
\end{align}
with some constants $c_3,c_4 > 0$. 
Note that we can estimate $\cE^{\mu}(\tau u,\tau u)$ as follows, using \autoref{thmcutoffest}:
\begin{align}
\label{eq:cusp-Sob-help-2}
\cE^{\mu}(\tau u, \tau u) \le c_5 \cE^{\mu}_{M_{\lambda r}}(u, u) + c_5 r^{-\am} \sum_{k = 1}^2 (\lambda^{\frac{\am}{\alpha_k}}-1)^{-\alpha_k} \Vert u \Vert_{L^2(M_{\lambda r})}^2,
\end{align}
where $c_5 > 0$ is a constant. 
By combining \eqref{eq:cusp-Sob-help-1} and \eqref{eq:cusp-Sob-help-2}  with the Sobolev inequality for $\ma$ (see \cite[Corollary 2.6]{ChKa20}), we obtain \eqref{eq:sobolev-assum}, as desired.

Next, we show the Poincar\'e inequality \eqref{eq:poincare-assum}. First, Jensen's inequality yields
\begin{align*}
\Vert u - [u]_{M_{\rho}} \Vert_{L^2(M_{\rho})}^2 &\le \int_{M_{\rho}} \dashint_{M_{\rho}} (u(x)-u(y))^2\d y \d x\\
& = \int_{M_{\rho}} \dashint_{M_{\rho}} \dashint_{M_{7\rho} \cap \Gamma(x) \cap \Gamma(y)} (u(x)-u(z)+u(z)-u(y))^2 \d z\d y \d x\\
& \le \frac{2}{\vert M_{7\rho} \cap \Gamma(x) \cap \Gamma(y) \vert} \Bigg( \int_{M_{\rho}} \int_{M_{7\rho} \cap \Gamma(x)} (u(x)-u(z))^2 \d z \d x \\
& \hspace{15em} + \int_{M_{7\rho}} \int_{M_{\rho} \cap \Gamma(y)} (u(y)-u(z))^2 \d z \d y \Bigg)\\
& \le \frac{c\rho^{2+\gamma}}{\vert M_{7\rho} \cap \Gamma(x) \cap \Gamma(y) \vert} \cE^{\mu}_{M_{7\rho}}(u,u).
\end{align*}
Next, we prove that 
\begin{align}
\label{eq:cusp-set-estimate}
\inf_{x,y \in M_{\rho}} \vert M_{7\rho} \cap \Gamma(x) \cap \Gamma(y) \vert \ge c\rho^{2+\gamma-\am}.
\end{align}
Without loss of generality we assume that $\alpha_1 \le \alpha_2$. Then $b_2 = \alpha_1 / \am$, $b_2 \ge b_1$, and $2+\gamma - \am = 1 + \frac{1}{b_2}$. Let $x,y \in M_{\rho}$ with $x_1 \le y_1$ and $x_2 \le y_2$. One can handle the other cases in the same way by changing roles of $x,y$ and considering $\overline{B}$ instead of $\underline{B}$ in the following argument. Note that
\begin{equation}
\vert \underline{B}^{7\rho}(x) \cap \underline{B}^{7\rho}((y_1,x_2))\vert \le \vert \underline{B}^{7\rho}(x) \cap \underline{B}^{7\rho}(y)\vert \le \vert M_{7\rho} \cap \Gamma(x) \cap \Gamma(y) \vert,
\end{equation}
so it is enough to assume that $x_2 = y_2$ and prove that $\vert \underline{B}^{7\rho}(x) \cap \underline{B}^{7\rho}(y)\vert \ge c \rho^{2+\gamma-\am}$. We compute:
\begin{align*}
\vert \underline{B}^{7\rho}(x) & \cap \underline{B}^{7\rho}(y)\vert \\
&  \ge\int_{-\rho^{1/b_2}}^{x_1} \left((7\rho + x_2) - (x_1 - w)^{b_2}\right)\d w + \int_{x_1}^{\frac{x_1+y_1}{2}} \left((7\rho + x_2) - (w - x_1)^{b_2}\right) \d w\\
&\qquad + \int_{\frac{x_1+y_1}{2}}^{y_1} \left((7\rho + x_2) - (y_1 - w)^{b_2}\right) \d w + \int_{y_1}^{\rho^{1/b_2}} \left((7\rho + x_2) - (w - y_1)^{b_2}\right) \d w\\
& = 2\rho^{1/b_2}(7\rho + x_2) - \frac{1}{1+b_2}\left((x_1 + \rho^{1/b_2})^{1+b_2} + 2\left(\frac{y_1 -x_1}{2} \right)^{1+b_2} + (\rho^{1/b_2}-y_1)^{1+b_2} \right)\\
& \ge 2\rho^{1/b_2}(7\rho + x_2) - \frac{1}{1+b_2}\left( 2^{1+b_2} + 2 + 2^{1+b_2} \right)\rho^{1+1/b_2}\\
& \ge \left(12 - \frac{2}{1+b_2}(1+2^{1+b_2}) \right)\rho^{1+ 1/b_2}\\
& \ge 2\rho^{2 + \gamma - \am}.
\end{align*}
In the second step, we used that from $x,y \in M_{\rho}$ it follows that: $7\rho + x_2 \ge 6\rho$, $x_1 \le \rho^{1/b_2}$, $y_1 - x_1 \le 2\rho^{1/b_2}$ and $y_1 \ge -\rho^{1/b_2}$. This concludes the proof of \eqref{eq:cusp-set-estimate}.\\ Thus, we have shown the following weak Poincar\'e inequality:
\begin{align}
\label{eq:weak-Poincare}
\Vert u - [u]_{M_{\rho}} \Vert_{L^2(M_{\rho})}^2 \le c \rho^{\am}  \cE^{\mu}_{M_{7\rho}}(u,u).
\end{align}
Weak Poincar\'e inequalities can be ''upgraded`` to actual Poincar\'e inequalities as \eqref{eq:poincare-assum} with the help of a Whitney covering argument. This is a very general procedure which works in the same way for local and nonlocal energies on doubling metric measure spaces and is explained in detail in \cite[Proof of Corollary 5.3.5]{Sal02}. For the convenience of the reader, we state (and prove) such result in a very general context in \autoref{lemma:weak-implies-strong}. In our case, the metric measure space is given by $(\R^2,m, \lambda^2)$, where $m(x,y) = \max_{k = 1,2} \left(|x_k - y_k|^{\frac{\alpha_k}{\am}}\mathbbm{1}_{|x_k - y_k| \le 1}(x,y) + \mathbbm{1}_{|x_k - y_k| > 1}(x,y) \right)$ denotes the underlying metric, and $\lambda^2$ stands for  the two-dimensional Lebesgue measure. The corresponding metric balls are given by the cubes $M_r(x)$. 
This concludes the proof of \eqref{eq:poincare-assum}.

It remains to prove \eqref{eq:fct-space-comp}. We have
\begin{align*}
 \mathcal{E}_{M_r}^{\mu}(u,u) & = C(\gamma,b)\int_{M_r}\int_{M_r\cap\Gamma(x)} (u(x)-u(y))^2\, |x-y|^{-2-\gamma} \d y \, \d x \\
 & \leq  2C(\gamma,b)\int_{M_r}\int_{M_r\cap\Gamma(x)}  \left((u(x)-u(x_1,y_2))^2 + (u(x_1,y_2) - u(y))^2\right) |x-y|^{-2-\gamma} \d y \, \d x \\
 & = 2C(\gamma,b)(I_1 + I_2).
\end{align*}
We can estimate $I_1$ as follows: $I_1\leq (A+B+C)$, by enlarging the area of integration $M_r\cap\Gamma(x)$ for the inner integral, as follows:
\begin{align*}
&A = \int_{M_r}\int_{-r^{\amax/\alpha_2}}^{r^{\amax/\alpha_2}} \int_{x_1-|x_2-y_2|^{1/b_2}}^{x_1+|x_2-y_2|^{1/b_2}} (u(x)-u(x+e_2(y_2-x_2)))^2 |x-y|^{-2-\gamma} \d y \, \d x, \\
&B = \int_{M_r}\int_{-r^{\amax/\alpha_2}}^{r^{\amax/\alpha_2}} \int_{x_1+|x_2-y_2|^{b_1}}^{\infty} (u(x)-u(x+e_2(y_2-x_2)))^2 |x-y|^{-2-\gamma} \d y \, \d x, \\
&C = \int_{M_r}\int_{-r^{\amax/\alpha_2}}^{r^{\amax/\alpha_2}} \int_{-\infty}^{x_1-|x_2-y_2|^{b_1}} (u(x)-u(x+e_2(y_2-x_2)))^2 |x-y|^{-2-\gamma} \d y \, \d x.
\end{align*}
We start by estimating $A$. For this purpose, we study the innermost integral, that is the integral with respect to $y_1$. 
By symmetry, it is sufficient to consider the area of integration $[x_1,x_1+|y_2-x_2|^{1/b_2}]$ for $y_1$, where $x\in M_r$ and $y_2\in [-r^{\amax/\alpha_2},-r^{\amax/\alpha_2}]$.
Let $h_1=y_1-x_1$ and $h_2=y_2-x_2$. Then
\begin{align*}
 \int_{x_1}^{x_1+|y_2-x_2|^{1/b_2}} |x-y|^{-2-\gamma} \, \d y_1 \leq \int_{x_1}^{x_1+|y_2-x_2|^{1/b_2}} |x_2-y_2|^{-2-\gamma} \, \d y_1 =  |x_2-y_2|^{-1-\alpha_2}.
 \end{align*}
 Hence
\[  A \leq 2\int_{M_r}\int_{-r^{\amax/\alpha_2}}^{r^{\amax/\alpha_2}} (u(x)-u(x+e_2(y_2-x_2)))^2 |x_2-y_2|^{-1-\alpha_2} \d y_2 \, \d x. \]
Again, we study the innermost integral to investigate $B$. Using \eqref{eq:bgamma}, leads to
\begin{align*}
 & \int_{x_1+|y_2-x_2|^{b_1}}^{\infty} |x-y|^{-2-\gamma} \, \d y_1 \leq \int_{x_1+|y_2-x_2|^{b_1}}^{\infty} |x_1-y_1|^{-2-\gamma} \, \d y_1  \\ & = \frac{1}{1+\gamma} \left( |h_2|^{b_1(-1-\gamma)} \right) \leq |h_2|^{-1-\alpha_2} = |x_2-y_2|^{-1-\alpha_2},
\end{align*}
and therefore
\[ B \leq \int_{M_r}\int_{-r^{\amax/\alpha_2}}^{r^{\amax/\alpha_2}} (u(x)-u(x+e_2(y_2-x_2)))^2 |x_2-y_2|^{-1-\alpha_2} \d y_2 \, \d x. \]
The term $C$ can be estimated in the same way as $B$.
Altogether, 
\[ I_1 \leq 4\int_{M_r}\int_{-r^{\amax/\alpha_2}}^{r^{\amax/\alpha_2}} (u(x)-u(x+e_2(y_2-x_2)))^2 |x_2-y_2|^{-1-\alpha_2} \d y_2 \, \d x, \]
and, since we can use the same arguments for $I_2$ as for $I_1$,
\[ I_2 \leq 4\int_{M_r}\int_{-r^{\amax/\alpha_1}}^{r^{\amax/\alpha_1}} (u(x)-u(x+e_1(y_1-x_1)))^2 |x_1-y_1|^{-1-\alpha_1} \d y_1 \, \d x. \]
Since $C(\gamma,b)\leq (2-\alpha_k)$ for $k\in\{1,2\}$, there is a universal constant $c>0$ such that
\begin{align*}
 \mathcal{E}_{M_r}^{\mu}(u,u) \leq 2(2-\alpha_1)I_1 + 2(2-\alpha_2)I_2 \leq  c\mathcal{E}_{M_r}^{\ma}(u,u).
\end{align*}
\end{proof}

\section{Appendix: Weak implies strong Poincar\'e inequality}
\label{sec:Sal}

In this section we prove that, in general doubling metric measure spaces, a weak Poincar\'e inequality (i.e. a Poincar\'e inequality with larger balls on the right hand side) implies a strong Poincar\'e inequality of the form \eqref{eq:poincare-assum}. The proof follows from a Whitney covering argument and is a direct adaptation of \cite[Corollary 5.3.5]{Sal02}. The result is used in the proof of \autoref{thm:comparability}.

Let $(X,d,\lambda)$ be a metric measure space. Let us denote the metric balls by $M_r(x_0)$ for $x_0 \in X$ and $r > 0$. We assume that $\mu$ is doubling, i.e. there exists $c_0 > 0$ such that $\lambda(M_{2r}(x_0)) \le c_0 \lambda(M_r(x_0))$ for any $r > 0$ and $x_0 \in X$. Let $(\mu(x,\cdot))_{x \in X}$ be a family of measures on $X$ satisfying
\begin{align*}
\sup_{x \in X} \int_{X} \min(1,d(x,y)^2) \mu(x,\d y) < \infty.
\end{align*}
For $M \subset X$ and $u \in L^2(M)$, we define
\begin{align*}
\cE^{\mu}_{M}(u,u) = \int_{M} \int_{M} (u(x) - u(y))^2 \mu(x,\d y) \lambda(\d x), \qquad [u]_{M} = \dashint_{M} u(x) \lambda(\d x).
\end{align*}

\begin{lemma}
\label{lemma:weak-implies-strong}
Let $(X,d,\lambda)$ be a doubling metric measure space.  Moreover, assume that there are $c_1 > 0$, $A > 1$, and $\alpha > 0$ such that for any $r \in (0,1)$ and $x_0 \in X$ it holds
\begin{align}
\label{eq:weak-Poincare-abstract}
\int_{M_r(x_0)} (u(x) - [u]_{M_r(x_0)})^2 \lambda(\d x) \le c_1 r^{\alpha} \cE^{\mu}_{M_{Ar}(x_0)}(u,u) ~~ \forall u \in L^2(M_r(x_0)).
\end{align}
Then, there exists $c > 0$, depending only on $c_0,c_1,A$, such that for any $r \in (0,1)$ and $x_0 \in X$:
\begin{align*}
\int_{M_r(x_0)} (u(x) - [u]_{M_r(x_0)})^2 \lambda(\d x) \le c r^{\alpha} \cE^{\mu}_{M_{r}(x_0)}(u,u) ~~ \forall u \in L^2(M_r(x_0)).
\end{align*}
\end{lemma}

\begin{proof}
Let us fix $M_r(x_0)$ for some $0 < r < R < 1$. We closely follow the arguments in \cite[Theorem 5.3.4]{Sal02} where we set $\Phi = \mathbbm{1}_{M_R(x_0)}$. First, we consider a covering $\mathcal{F}$ of disjoint metric balls $M$ such that $M_r(x_0) = \bigcup_{M \in \cF} 2M$, $10^3M \subset M_r(x_0)$ and 
\begin{align}
\label{eq:covering-prop-0}
\sup_{\eta \in M_r(x_0)} |\{M \in \cF : \eta \in 100M \}| \le K < \infty,
\end{align}
where $K > 0$ depends only on $d,\alpha_1,\alpha_2$.
The construction goes in the same way as in \cite[p. 135]{Sal02} and only requires a doubling metric measure space. Moreover, one can show that for any $M \in \cF$, there exists a string of balls $\cF(M) = (M_0 = M_{x_0},M_1,\dots,M_{l(M)-1}=M)$ such that $\overline{2M_i} \cap \overline{2 M_{i+1}} \not= \emptyset$, where $M_{x_0} \in \cF$ is such that $x_0 \in M_{x_0}$. Any two consecutive balls $M_i,M_{i+1}$ satisfy 
\begin{align}
\label{eq:covering-prop}
r(M_i) \asymp r(M_{i+1}), \qquad M_{i+1} \subset 4M_i, \qquad |4M_i \cap 4M_{i+1}| \ge c \max(|M_i|,|M_{i+1}|),
\end{align}
and moreover, for any $A \in \cF(M)$, it holds $M \subset 10^4 A$.\\
First, by the triangle inequality:
\begin{align*}
\int_{M_r(x_0)} |u - [u]_{4M_{x_0}}|^2 \lambda(\d x) &\le \sum_{M \in \cF} \int_{2M} |u - [u]_{4M_{x_0}}|^2 \lambda(\d x) \\
&\le c \sum_{M \in \cF} \int_{2M} |u - [u]_{4M}|^2 \lambda(\d x) + c \sum_{M \in \cF} \lambda(4M) |[u]_{4M} - [u]_{4M_{x_0}}|^2 \\
&= I_1 + I_2.
\end{align*}
Using \eqref{eq:weak-Poincare}, and \eqref{eq:covering-prop-0}, we deduce
\begin{align*}
I_1 \le c\sum_{M \in \cF} r(4M)^{\alpha} \cE_{8M}^{\mu}(u,u) \le c r^{\alpha} \cE_{M_r(x_0)}^{\mu}(u,u).
\end{align*}
Moreover, to bound $I_2$, we observe that using \eqref{eq:covering-prop}, \eqref{eq:weak-Poincare-abstract}, and the triangle inequality, one derives as in \cite[Lemma 5.3.9]{Sal02} for any $M \in \cF$ and $M_i,M_{i+1} \in \cF(M)$:
\begin{align*}
|[u]_{4M_i} - [u]_{4M_{i+1}}| \le c \frac{r(M)^{\alpha/2}}{\lambda(M)^{1/2}} \cE^{\mu}_{32M}(u,u)^{1/2}.
\end{align*}
Therefore, using \eqref{eq:covering-prop} as in \cite[p. 142]{Sal02}:
\begin{align*}
|[u]_{4M} - [u]_{4M_x}| \mathbbm{1}_{M} &\le \sum_{ i = 1}^{l(M)-1} |[u]_{4M_i} - [u]_{4M_{i+1}}| \mathbbm{1}_{M} \le c \sum_{ i = 1}^{l(M)-1} \frac{r(M_i)^{\alpha/2}}{\lambda(M)^{1/2}} \cE^{\mu}_{32M}(u,u)^{1/2} \mathbbm{1}_{M} \\
&\le c \sum_{A \in \cF} \frac{r(A)^{\alpha/2}}{\lambda(A)^{1/2}} \cE^{\mu}_{32A}(u,u)^{1/2} \mathbbm{1}_{10^4 A} \mathbbm{1}_{M},
\end{align*}
and since the $M \in \cF$ are disjoint,
\begin{align*}
\sum_{M \in \cF} |[u]_{4M} - [u]_{4M_x}|^2 \mathbbm{1}_{M} \le c \left( \sum_{A \in \cF} \frac{r(A)^{\alpha/2}}{\lambda(A)^{1/2}} \cE^{\mu}_{32A}(u,u)^{1/2} \mathbbm{1}_{10^5 A} \right)^{2}. 
\end{align*}
Next, using \cite[Lemma 5.3.12]{Sal02} in the same way as in \cite{Sal02}, and also \eqref{eq:covering-prop-0}, we obtain 
\begin{align*}
I_2 = \int_{M_r(x_0)} \sum_{M \in \cF} |[u]_{4M} - [u]_{4M_x}|^2 \mathbbm{1}_{M} \lambda(\d x) &\le  c \int_{M_r(x_0)} \left( \sum_{A \in \cF} \frac{r(A)^{\alpha/2}}{\lambda(A)^{1/2}} \cE^{\mu}_{32A}(u,u)^{1/2} \mathbbm{1}_{A} \right)^2 \lambda(\d x)\\
&\le c \int_{M_r(x_0)} \left( \sum_{A \in \cF} \frac{r(A)^{\alpha}}{\lambda(A)} \cE^{\mu}_{32A}(u,u) \mathbbm{1}_{A} \right) \lambda(\d x) \\
&\le c r^{\alpha} \sum_{A \in \cF} \cE^{\mu}_{32A}(u,u)\\
&\le c r^{\alpha} \cE^{\mu}_{M_r(x_0)}(u,u).
\end{align*}
Altogether, we have shown
\begin{align*}
\int_{M_r(x_0)} |u - [u]_{4M_{x_0}}|^2 \lambda(\d x) \le c r^{\alpha} \cE^{\mu}_{M_r(x_0)}(u,u).
\end{align*}
Therefore,
\begin{align*}
\int_{M_r(x_0)} |u - [u]_{M_r(x_0)}|^2 \lambda(\d x) &\le 2\int_{M_r(x_0)} |u - [u]_{4 M_{x_0}}|^2 \lambda(\d x) + 2 |[u]_{4M_{x_0}} - [u]_{M_r(x_0)}|^2  \\
&\le c \int_{M_r(x_0)} |u - [u]_{4 M_{x_0}}|^2 \lambda(\d x)\\
&\le c r^{\alpha} \cE^{\mu}_{M_r(x_0)}(u,u),
\end{align*}
which proves the desired result.
\end{proof}


\end{document}